\newtheorem{Theorem}{Theorem}[section]
\newtheorem{Proposition}[Theorem]{Proposition}
\newtheorem{Lemma}[Theorem]{Lemma}
\newtheorem{Corollary}[Theorem]{Corollary}
\newtheorem*{MainTheorem}{Theorem~\ref{thm-BigEquiv}}
\newtheorem*{Claim}{Claim}
\newtheorem{MClaim}{Claim}
\newtheorem{NClaim}{Claim}
\theoremstyle{definition}
\newtheorem{Definition}[Theorem]{Definition}
\newcommand{\rca}{\mathsf{RCA}_0}
\newcommand{\wkl}{\mathsf{WKL}_0}
\newcommand{\aca}{\mathsf{ACA}_0}
\newcommand{\rt}{\mathsf{RT}}
\DeclareMathOperator{\spec}{\mathrm{Spec}}
\DeclareMathOperator{\upper}{\mathcal{U}}
\DeclareMathOperator{\alex}{\mathcal{A}}
\newcommand{\andd}{\wedge}
\newcommand{\orr}{\vee}
\newcommand{\la}{\langle}
\newcommand{\ra}{\rangle}
\newcommand{\da}{{\downarrow}}
\newcommand{\ua}{{\uparrow}}
\newcommand{\imp}{\rightarrow}
\newcommand{\Imp}{\Rightarrow}
\newcommand{\biimp}{\leftrightarrow}
\newcommand{\Biimp}{\Leftrightarrow}
\newcommand{\Nb}{\mathbb{N}}
\newcommand{\Qb}{\mathbb{Q}}
\newcommand{\Rb}{\mathbb{R}}
\newcommand{\Pf}{\mathcal{P}_\mathrm{f}}
\newcommand{\Ps}{\mathcal{P}}
\newcommand{\MP}[1]{\mathcal{#1}}
\newcommand{\bs}[1]{{\mathbf #1}}
\title{Reverse mathematics, well-quasi-orders, and Noetherian spaces}
\author[Frittaion]{Emanuele Frittaion}
 \address{Mathematical Institute, Tohoku University, Japan}
\thanks{Emanuele Frittaion's research is supported by the Japan Society for the
Promotion of Science.}
\email{frittaion@math.tohoku.ac.jp}
\urladdr{http://www.math.tohoku.ac.jp/~frittaion/}
\author[Hendtlass]{Matt Hendtlass}
 \address{School of Mathematics and Statistics,
    University of Canterbury,
    Christchurch 8041,
    New Zealand}
\email{matthew.hendtlass@canterbury.ac.nz}
\author[Marcone]{Alberto Marcone}
 \address{Dipartimento di Matematica e Informatica,
    Universit\`{a} di Udine,
    viale delle Scienze 206,
    33100 Udine,
    Italy}
\email{alberto.marcone@uniud.it}
\urladdr{http://users.dimi.uniud.it/~alberto.marcone/}
\thanks{Alberto Marcone's research was supported by PRIN 2009 Grant ``Modelli
e Insiemi'' and PRIN 2012 Grant ``Logica, Modelli e Insiemi.''}
\author[Shafer]{Paul Shafer}
\address{Department of Mathematics\\
Ghent University\\
Krijgslaan 281 S22\\
B-9000 Ghent\\
Belgium}
\email{paul.shafer@ugent.be}
\urladdr{http://cage.ugent.be/~pshafer/}
\thanks{Paul Shafer is an FWO Pegasus Long Postdoctoral Fellow.}
\author[Van der Meeren]{Jeroen Van der Meeren}
\address{Department of Mathematics\\
Ghent University\\
Krijgslaan 281 S22\\
B-9000 Ghent\\
Belgium}
\email{jeroen.vandermeeren@ugent.be}
\urladdr{http://cage.ugent.be/~jvdm/}
\thanks{Jeroen Van der Meeren is an FWO Ph.D. Fellow.}
\date{November 27, 2015}
\begin{document}

\begin{abstract}
A quasi-order $Q$ induces two natural quasi-orders on $\Ps(Q)$, but if $Q$
is a well-quasi-order, then these quasi-orders need not necessarily be
well-quasi-orders. Nevertheless, Goubault-Larrecq in \cite{Gou-lics07}
showed that moving from a well-quasi-order $Q$ to the quasi-orders on
$\Ps(Q)$ preserves well-quasi-orderedness in a topological sense.
Specifically, Goubault-Larrecq proved that the upper topologies of the
induced quasi-orders on $\Ps(Q)$ are Noetherian, which means that they
contain no infinite strictly descending sequences of closed sets.  We
analyze various theorems of the form ``if $Q$ is a well-quasi-order then a
certain topology on (a subset of) $\Ps(Q)$ is Noetherian'' in the style of
reverse mathematics, proving that these theorems are equivalent to $\aca$
over $\rca$. To state these theorems in $\rca$ we introduce a new framework
for dealing with second-countable topological spaces.
\end{abstract}

\maketitle

\section{Introduction}
A topological space is \emph{Noetherian} if it satisfies the following
equivalent conditions.
\begin{itemize}
\item Every subspace is compact.

\item Every ascending sequence of open sets stabilizes: for every
    sequence $(G_n)_{n \in \Nb}$ of open sets such that $\forall n(G_n
  \subseteq G_{n+1})$, there is an $N$ such that $(\forall n > N)(G_n =
  G_N)$.

\item Every descending sequence of closed sets stabilizes: for
    every sequence $(F_n)_{n \in \Nb}$ of closed sets such that $\forall
  n(F_n \supseteq F_{n+1})$, there is an $N$ such that $(\forall n > N)(F_n
  = F_N)$.
\end{itemize}

The name `Noetherian space' comes from the typical example of a Noetherian
space, which is the Zariski topology on the spectrum of a Noetherian ring.
If $R$ is a commutative ring, let $\spec(R)$, the spectrum of $R$, denote the
set of prime ideals in $R$. The Zariski topology on $\spec(R)$ is the
topology whose closed sets are the sets of the form $\{P \in \spec(R) : I
\subseteq P\}$, where $I \subseteq R$ is an ideal. If the ring $R$ is
Noetherian, then $\spec(R)$ with the Zariski topology is a Noetherian space.

The present work, however, is not concerned with the connections between
Noetherian spaces and algebraic geometry but with the connections between
Noetherian spaces and the theory of well-quasi-orders. Goubault-Larrecq
in~\cite{Gou-lics07}, motivated by possible applications to verification
problems as explained in~\cite{Gou10}, provided several results demonstrating
that Noetherian spaces can be thought of as topological versions, or
generalizations, of well-quasi-orders. We analyze these theorems in the style
of reverse mathematics, proving that they are equivalent to $\aca$ over the
base theory $\rca$. As a byproduct of this analysis, we obtain elementary
proofs of Goubault-Larrecq's results which are much more direct than the
original category-theoretic arguments used in~\cite{Gou-lics07}. The logical
analysis of Noetherian spaces arising from Noetherian rings is ongoing work.

A quasi-order $Q$ induces various quasi-orders on $\Ps(Q)$, the power set of
$Q$, which in turn induce various topologies on $\Ps(Q)$. The theorems of
\cite{Gou-lics07} state that if a quasi-order $Q$ is in fact a
well-quasi-order, then several of the resulting topologies on $\Ps(Q)$ are
Noetherian. In order to state these results precisely, we must first
introduce the relevant definitions.

A \emph{quasi-order} is a pair $(Q,\leq_Q)$, where $Q$ is a set and $\leq_Q$
is a binary relation on $Q$ satisfying the reflexivity axiom $(\forall q \in
Q)(q \leq_Q q)$ and the transitivity axiom $(\forall p,q,r \in Q)((p \leq_Q q
\andd q \leq_Q r) \imp p \leq_Q r)$. For notational ease, we usually identify
$(Q,\leq_Q)$ and $Q$.  We write $p <_Q q$ when we have both $p \leq_Q q$ and
$q \nleq_Q p$, and we write $p \mid_Q q$ when we have both $p \nleq_Q q$ and
$q \nleq_Q p$.

If $E \in \Ps(Q)$, then $E\da = \{q \in Q : (\exists p \in E)(q \leq_Q p)\}$
denotes the downward closure of $E$ and $E\ua = \{q \in Q : (\exists p \in
E)(p \leq_Q q)\}$ denotes the upward closure of $E$. For $p \in Q$, we
usually write $p\da$ for $\{p\}\da$ and $p\ua$ for $\{p\}\ua$. A quasi-order
$Q$ induces the following quasi-orders $\leq_Q^\flat$ and $\leq_Q^\sharp$ on
$\Ps(Q)$. (We follow the notation of~\cite{Gou-lics07}. In other works, such
as~\cites{M2001,M2005}, `$\leq_Q^\flat$' is written as
`$\leq^\exists_\forall$' and `$\leq_Q^\sharp$' is written as
`$\leq^\forall_\exists$'.)

\begin{Definition}
Let $Q$ be a quasi-order. For $A, B \in \Ps(Q)$, define
\begin{itemize}
\item $A \leq_Q^\flat B$ if and only if $(\forall a \in A) (\exists b \in
    B) (a \leq_Q b)$, and
\item $A \leq_Q^\sharp B$ if and only if $(\forall b \in B) (\exists a \in
    A) (a \leq_Q b)$.
\end{itemize}
\end{Definition}
Notice that $A \leq_Q^\flat B$ is equivalent to $A \subseteq B\da$ and that
$A \leq_Q^\sharp B$ is equivalent to $B \subseteq A\ua $. We denote
$(\Ps(Q),\leq_Q^\flat)$ by $\Ps^\flat(Q)$ and $(\Ps(Q),\leq_Q^ \sharp)$ by
$\Ps^\sharp(Q)$: it is easy to check that these are indeed quasi-orders (they
are partial orders if and only if $Q$ is an antichain).

Both $\Ps^\flat(Q)$ and $\Ps^\sharp(Q)$ have been studied for a long time by
computer scientists. In this context, $\Ps^\flat(Q)$ is known as the
\emph{Hoare quasi-order}, and $\Ps^\sharp(Q)$ is known as the \emph{Smyth
quasi-order}. For example, these orders can be used to compare the different
executions of a non-deterministic computation (see e.g.\ \cite{Winskel} for
an early presentation).

A quasi-order can be topologized in several ways. We consider the Alexandroff
topology and the upper topology.

\begin{Definition}
Let $Q$ be a quasi-order.
\begin{itemize}
\item The \emph{Alexandroff topology} of $Q$ is the topology whose open
    sets are those of the form $E\ua$ for $E \subseteq Q$. The topological
    space consisting of $Q$ with its Alexandroff topology is denoted
    $\alex(Q)$.
\item The \emph{upper topology} of $Q$ is the topology whose basic open
    sets are those of the form $Q \setminus (E\da)$ for $E \subseteq Q$
    finite. The topological space consisting of $Q$ with its upper topology
    is denoted $\upper(Q)$.
\end{itemize}
\end{Definition}

Notice that, unless $Q$ is a partial order, neither $\alex(Q)$ nor
$\upper(Q)$ are $T_0$ spaces, and, unless $Q$ is an antichain, neither
$\alex(Q)$ nor $\upper(Q)$ are $T_1$ spaces. The order-theoretic
significances of $\alex(Q)$ and $\upper(Q)$ are that they are the finest and
coarsest topologies on $Q$ from which $\leq_Q$ can be recovered. Every
topological space $X$ induces a \emph{specialization quasi-order} on $X$
defined by $x \preceq y$ if and only if every open set that contains $x$ also
contains $y$. If $Q$ is a quasi-order, then $\alex(Q)$ (respectively
$\upper(Q)$) is the finest (respectively coarsest) topology on $Q$ for which
${\preceq} = {\leq_Q}$ (see for example \cite[Section~4.2]{GLBook}).

Finally, a \emph{well-quasi-order} (\emph{wqo}) is a quasi-order $Q$ that is
well-founded and has no infinite antichains. Equivalently, a quasi-order $Q$
is a wqo if for every function $f \colon \Nb \imp Q$, there are $m,n \in \Nb$
with $m < n$ such that $f(m) \leq_Q f(n)$.  It is easy to check that, for a
quasi-order $Q$, $Q$ is a wqo if and only if $\alex(Q)$ is Noetherian, in
which case $ \upper(Q)$ is also Noetherian.  In fact, in
Proposition~\ref{prop-WQOvsTopInRCA} we show that these facts are provable in
$\rca$.

For a quasi-order $Q$, let $\Pf(Q)$ denote the set of finite subsets of $Q$,
and let $\Pf^\flat(Q)$ and $\Pf^\sharp(Q)$ denote the respective restrictions
of $\Ps^\flat(Q)$ and $\Ps^\sharp(Q)$ to $\Pf(Q)$.
If $Q$ is a wqo, then $\Pf^\flat(Q)$ is also a wqo (see~\cite{ErdosRado}), but
$\Ps^\flat(Q)$, $\Ps^\sharp(Q)$, and
$\Pf^\sharp(Q)$ need not be wqo's. This can be seen by considering
Rado's example~\cite{RadoWPO}, the well-quasi-order $(R, \leq_R)$ where $R =
\{(i,j) \in \Nb \times \Nb : i < j\}$ and $(i,j) \leq_R (k,\ell)$ if $(i=k
\andd j \leq \ell) \vee j < k$ (see \cites{JanvcarWQO,M2001} for a complete
explanation).  Nevertheless, Goubault-Larrecq~\cite{Gou-lics07} proved that 
although passing from a wqo $Q$ to the quasi-orders $\Ps^\flat(Q)$, $\Ps^
\sharp(Q)$, and $\Pf^\sharp(Q)$ does not necessarily preserve
well-quasi-orderedness, it does preserve well-foundedness in the sense
that the upper topologies of $\Ps^\flat(Q)$ and $\Pf^\sharp(Q)$ are Noetherian.

\begin{Theorem}[\cite{Gou-lics07}]\label{thm-GL}
If $Q$ is a wqo, then $\upper(\Ps^\flat(Q))$ and $\upper(\Pf^\sharp(Q))$ are
Noetherian.
\end{Theorem}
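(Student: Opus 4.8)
The plan is to obtain both halves of the theorem from a single lemma about Noetherian spaces. Call a family $\{B_p\}_{p \in P}$ of open subsets of a space $X$, indexed by a well-quasi-order $P$, an \emph{antitone basis} for $X$ if it is a basis for the topology of $X$ and $p \leq_P q$ implies $B_q \subseteq B_p$. I would first prove: \emph{if $X$ has an antitone basis, then $X$ is Noetherian.} The argument is short. For a closed set $\mathcal{F} \subseteq X$ put $S(\mathcal{F}) = \{p \in P : B_p \cap \mathcal{F} = \emptyset\}$; antitonicity makes $S(\mathcal{F})$ an upward-closed subset of $P$, and since $X \setminus \mathcal{F}$ is the union of the basic open sets it contains we get $X \setminus \mathcal{F} = \bigcup_{p \in S(\mathcal{F})} B_p$, so $\mathcal{F} \mapsto S(\mathcal{F})$ is an order-reversing injection from the closed sets of $X$ into the upward-closed subsets of $P$. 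A descending sequence of closed sets of $X$ therefore yields an ascending sequence of upward-closed subsets of $P$, that is, an ascending sequence of open sets of $\alex(P)$; since $P$ is a wqo this stabilizes by Proposition~\ref{prop-WQOvsTopInRCA}, and hence so does the original sequence.

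It then remains to equip $\upper(\Ps^\flat(Q))$ and $\upper(\Pf^\sharp(Q))$ with antitone bases, and in both cases the indexing wqo is $\Pf^\flat(Q)$, which is a wqo whenever $Q$ is by~\cite{ErdosRado}. For $\upper(\Ps^\flat(Q))$ I would use $B_T = \{A \in \Ps(Q) : T \leq_Q^\flat A\}$ for $T \in \Pf(Q)$: this is antitone ($T_1 \leq_Q^\flat T_2 \leq_Q^\flat A$ implies $T_1 \leq_Q^\flat A$) and $B_{T_1} \cap B_{T_2} = B_{T_1 \cup T_2}$. For $\upper(\Pf^\sharp(Q))$ I would use $C_E = \{F \in \Pf(Q) : F \cap (E\da) = \emptyset\}$, which is just $\Pf(Q \setminus E\da)$, for $E \in \Pf(Q)$: this is antitone ($E_1 \leq_Q^\flat E_2$ gives $E_1\da \subseteq E_2\da$, whence $Q \setminus E_2\da \subseteq Q \setminus E_1\da$) and $C_{E_1} \cap C_{E_2} = C_{E_1 \cup E_2}$. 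Each $B_T$ and each $C_E$ is a finite intersection of basic open sets of the relevant upper topology, hence open.

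The substantive step, and the one I expect to be the main obstacle, is verifying that $\{B_T\}$ and $\{C_E\}$ really are bases, i.e.\ that each basic open set $Y \setminus (\mathcal{E}\da)$ of the upper topology — with $Y$ the underlying set and $\mathcal{E}$ a finite set of its points — is a union of sets from the proposed family. For $\upper(\Ps^\flat(Q))$ one unwinds the membership condition: $A$ belongs to $\Ps(Q) \setminus (\mathcal{E}\da)$ exactly when for every $B \in \mathcal{E}$ there is an element $a_B \in A$ with $a_B \notin B\da$; setting $T = \{a_B : B \in \mathcal{E}\} \in \Pf(Q)$ one checks $A \in B_T$ and $B_T \subseteq \Ps(Q) \setminus (\mathcal{E}\da)$, so the basic open set is the union of the $B_T$ it contains. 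The argument for $\upper(\Pf^\sharp(Q))$ is parallel, with the witnesses now being elements $b_B \in B$ with $F \cap b_B\da = \emptyset$ and $E = \{b_B : B \in \mathcal{E}\}$. What makes this work is that each witness can be taken to be a single element of $Q$, so that the index $T$ (resp.\ $E$) lies in $\Pf(Q)$, hence in the wqo $\Pf^\flat(Q)$; this is exactly the feature absent from the Alexandroff topologies, consistently with the fact that $\Ps^\flat(Q)$ and $\Pf^\sharp(Q)$ themselves need not be wqo's. All the comprehension used — forming the sets $S(\mathcal{F}_n)$ and carrying out these manipulations — is arithmetical, so the whole argument takes place in $\aca$.
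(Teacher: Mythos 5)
Your proof is correct, but it takes a genuinely different route from the paper's. The paper handles the two spaces separately and argues by contraposition: from a non-stabilizing descending sequence of closed sets in $\upper(\Ps^\flat(Q))$ it extracts a bad sequence in $\Pf^\flat(Q)$ (Theorem~\ref{thm-UnctblFlatInACA}, using the observation that membership of an arbitrary $A$ in an effectively closed set is decided by $\Pf(A)$), while for the $\sharp$ case (Theorem~\ref{thm-UnctblSharpInACA} and Corollary~\ref{thm-SharpInACA}) it runs a more delicate recursion producing a bad sequence in $Q$ itself, with a separate stabilization claim governing the choice of each next term. You instead prove one lemma --- a space with a basis indexed antitonically by a wqo is Noetherian, via the order-reversing injection $\mathcal{F} \mapsto S(\mathcal{F})$ into the upward-closed subsets of the index order --- and then give both spaces bases indexed by the single wqo $\Pf^\flat(Q)$; your $B_T$ and $C_E$ are in fact exactly the basic open sets of the paper's codings in Definitions~\ref{def-UnctblUpperFlat} and~\ref{def-UnctblUpperSharp}, and your key verification that the witnesses can be taken to be single elements of $Q$ is the same finite-witness phenomenon the paper exploits in its Claims. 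Your route is more uniform and makes transparent why $\Pf^\flat(Q)$ is the wqo secretly underlying both halves of the theorem, reducing everything to Theorem~\ref{thm-FlatWQOinACA}. What the paper's separate arguments buy is alignment with its reverse-mathematical agenda: the contrapositive form matches the reversals of Section~\ref{sec-reversals}, and the direct construction of a bad sequence in $Q$ for the $\sharp$ case isolates exactly where arithmetical comprehension and bounding enter, whereas your argument channels all the strength through the single appeal to ``$\Pf^\flat(Q)$ is a wqo.'' Both proofs go through in $\aca$; for yours one should just record, as you do implicitly, that $S(\mathcal{F})$ is arithmetical because $B_T \cap \mathcal{F} = \emptyset$ can be tested on finite subsets of $Q$.
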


Though Goubault-Larrecq explicitly proved Theorem~\ref{thm-GL} for
$\upper(\Ps^\flat(Q))$ and $\upper(\Pf^\sharp(Q))$ only,
it is also true that if $Q$ is a  wqo, then $\alex(\Pf^\flat(Q))$, 
$\upper(\Pf^\flat(Q))$, 
and $\upper(\Ps^\sharp(Q))$ are Noetherian as well.  For $\alex(\Pf^\flat(Q))$ and 
$\upper(\Pf^\flat(Q))$, this is because if $Q$ is a wqo, then $\Pf^\flat(Q)$ is also a 
wqo (the $\upper(\Pf^\flat(Q)) $ case also follows from Theorem~\ref{thm-GL}).  
The $\upper(\Ps^\sharp(Q))$ case follows from Theorem~\ref{thm-GL} because 
for every $A \in \Ps(Q)$ there is a $B \in \Pf(Q)$ that is equivalent to $A$ in the 
sense that $A \leq_Q^\sharp B$ and $B \leq_Q^\sharp A$.  Notice, however, that if 
$Q$ is a wqo, then $\alex(\Ps^\flat(Q))$, $\alex(\Ps^\sharp(Q))$, and $\alex(\Pf^
\sharp(Q))$ need not necessarily be Noetherian.   This is because $\Ps^\flat(Q)$, 
$\Ps^\sharp(Q)$, and $\Pf^\sharp(Q)$ need not necessarily be wqo's, and a
quasi-order's Alexandroff topology is Noetherian if and only if the quasi-order
itself is a wqo.

Concerning $\Ps^\flat(Q)$ and $\Ps^\sharp(Q)$, we wish to remark that
Nash-Williams~\cite{NashWilliamsBQO} strengthened well-quasi-orders to
better-quasi-orders (bqo's), an ingenious insight that led to a rich theory,
including Laver's proof of Fra\"iss\'e's conjecture in~\cite{Laver}.  A few
years later, Pouzet~\cite{Pouzet} introduced a hierarchy of notions
intermediate between wqo and bqo by defining the $\alpha$-well-quasi-orders
($\alpha$-wqo's) for each countable ordinal $\alpha$.  The $\omega$-wqo's are
exactly the wqo's, and the larger $\alpha$ is, the closer the notion of
$\alpha$-wqo is to the notion of bqo.  Indeed, $Q$ is a bqo if and only if
$Q$ is an $\alpha$-wqo for every $\alpha<\omega_1$.  By imposing these
stronger conditions on $Q$, we may ensure that $\Ps^\flat(Q)$ and
$\Ps^\sharp(Q)$ are wqo's.

\begin{Theorem}[see~\cite{M2001} for a complete discussion and further
results]{\ }
\begin{itemize}
\item If $Q$ is a bqo, then $\Ps^\flat(Q)$, $\Ps^\sharp(Q)$,
    $\Pf^\flat(Q)$, and $\Pf^\sharp(Q)$ are all bqo's.
\item If $Q$ is a $\omega^2$-wqo, then $\Ps^\flat(Q)$, $\Ps^\sharp(Q)$,
    $\Pf^\flat(Q)$, and $\Pf^\sharp(Q)$ are all wqo's.
\end{itemize}
\end{Theorem}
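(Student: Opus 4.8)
The plan is to reduce both assertions to a single statement about $\Ps^\flat(Q)$, then to prove the $\omega^2$-wqo part by an elementary argument on the barrier $[\Nb]^2$ and the bqo part from Nash--Williams' closure theorem for transfinite sequences. First I would note that it suffices to prove that $\Ps^\flat(Q)$ is a bqo (respectively a wqo). Indeed, being a bqo --- and, for each fixed $\alpha$, being an $\alpha$-wqo --- passes to sub-quasi-orders, because a bad array into a sub-quasi-order is a bad array into the ambient quasi-order; so $\Pf^\flat(Q)\subseteq\Ps^\flat(Q)$ and $\Pf^\sharp(Q)\subseteq\Ps^\sharp(Q)$ are handled once $\Ps^\flat(Q)$ and $\Ps^\sharp(Q)$ are. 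Moreover $\Ps^\flat(Q)$ and $\Ps^\sharp(Q)$ are \emph{equivalent} quasi-orders: $\Ps^\flat(Q)$ is equivalent, via $A\mapsto A\da$, to the partial order $(\{D\subseteq Q : D=D\da\},{\subseteq})$ of downward-closed subsets of $Q$, and $\Ps^\sharp(Q)$ is equivalent, via $A\mapsto Q\setminus(A\ua)$, to the \emph{same} partial order; since bqo-ness and $\alpha$-wqo-ness are invariants of quasi-order equivalence, $\Ps^\sharp(Q)$ inherits whatever we prove for $\Ps^\flat(Q)$. (We may also assume $Q$ countable: a bad array into $\Ps^\flat(Q)$ can be replaced by one whose values are countable subsets of $Q$, hence by a bad array into a countable sub-quasi-order of $Q$.)

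\emph{The $\omega^2$-wqo case.} Here I would argue directly, using only the weak consequence of $\omega^2$-wqo-ness that every $Q$-array on $[\Nb]^2$ is good. Suppose toward a contradiction that $(A_k)_{k\in\Nb}$ is a bad sequence in $\Ps^\flat(Q)$. For each $k<l$, since $A_k\not\leq^\flat_Q A_l$, there is an element $q_{k,l}\in A_k$ (take the one with least code) with $q_{k,l}\nleq_Q b$ for every $b\in A_l$; set $f(\{k,l\})=q_{k,l}$. Then $f$ is a $Q$-array on $[\Nb]^2$, and whenever $\{k,l\}\triangleleft\{l,m\}$ --- that is, $k<l<m$ --- we have $q_{l,m}\in A_l$, so $f(\{k,l\})=q_{k,l}\nleq_Q q_{l,m}=f(\{l,m\})$, and $f$ is bad. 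This contradicts the hypothesis on $Q$, so $\Ps^\flat(Q)$ has no bad sequence and is a wqo.

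\emph{The bqo case.} Here I would invoke Nash--Williams' theorem that the class of bqo's is closed under transfinite sequences~\cite{NashWilliamsBQO}: if $Q$ is a bqo, then $Q^{<\omega_1}$ ordered by embeddability is a bqo, hence so is its sub-quasi-order $(Q^\Nb,\leq_{\mathrm{Hig}})$, where $x\leq_{\mathrm{Hig}} y$ iff some strictly increasing $h\colon\Nb\imp\Nb$ satisfies $x(n)\leq_Q y(h(n))$ for all $n$. A quasi-order that extends a bqo on the same underlying set is again a bqo, and the relation $\trianglelefteq$ on $Q^\Nb$ given by $x\trianglelefteq y$ iff $(\forall n)(\exists m)(x(n)\leq_Q y(m))$ extends $\leq_{\mathrm{Hig}}$, so $(Q^\Nb,\trianglelefteq)$ is a bqo. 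Finally, assuming $Q\neq\emptyset$ (otherwise everything is trivial), the map $x\mapsto\ran(x)$ sends $Q^\Nb$ onto $\Ps(Q)\setminus\{\emptyset\}$ and satisfies $\ran(x)\leq^\flat_Q\ran(y)$ iff $x\trianglelefteq y$; a bad $\Ps^\flat(Q)$-array may be taken to avoid the value $\emptyset$ (every element of a barrier has a $\triangleleft$-successor in it, and $\emptyset$ is $\leq^\flat_Q$-least), so choosing preimages turns it into a bad $(Q^\Nb,\trianglelefteq)$-array, which is impossible. Hence $\Ps^\flat(Q)$ is a bqo. (Alternatively one runs a minimal bad array argument for $\Ps^\flat(Q)$ directly, using that $(\{D\subseteq Q : D=D\da\},{\subseteq})$ is well-founded whenever $Q$ is a wqo: a strictly decreasing sequence of downward-closed sets yields, by picking witnesses, a bad sequence in $Q$.)

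I expect the bqo case to be the main obstacle: the closure of bqo's under infinite sequences is genuinely a minimal bad array argument, which must either be carried out in full or imported from Nash--Williams, whereas the $\omega^2$-wqo case is elementary. The same pattern underlies both --- read a bad array as a choice of ``bad witnesses'' $q_{s,t}$ indexed by the $\triangleleft$-pairs of a barrier, then reassemble these into a single $Q$-array --- but over an arbitrary barrier the reassembly is exactly the step that requires the machinery. Finally, sharpening these merely sufficient hypotheses into optimal ones is the content of the fine analysis in~\cite{M2001}; Rado's example~\cite{RadoWPO} shows that the bare wqo hypothesis (i.e.\ $\omega$-wqo-ness) does not suffice for the second assertion.
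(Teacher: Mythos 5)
The paper states this theorem without proof, citing \cite{M2001}, and your argument is correct and follows essentially the standard lines of that source: the reduction of all four quasi-orders to the lattice of downward-closed subsets, the bad-array-on-$[\Nb]^2$ argument for the $\omega^2$-wqo case, and Nash--Williams' transfinite-sequence theorem \cite{NashWilliamsBQO} for the bqo case. The one point to make explicit is that $x \mapsto \ran(x)$ maps $Q^{\Nb}$ onto the nonempty \emph{countable} subsets of $Q$ only, so the reduction to countable $Q$ in your first paragraph (which is itself correct, since a bad array into $\Ps^\flat(Q)$ can be shrunk to one with countable values by retaining only the countably many witnesses $a_{s,t}$) is genuinely needed at that step rather than optional.
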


The purpose of this work is to study Theorem~\ref{thm-GL} and related
statements from the viewpoint of reverse mathematics. Our main result is the
following.

\begin{MainTheorem}
The following are equivalent over $\rca$.
\begin{enumerate}[(i)]
\item $\aca$.
\item If $Q$ is a wqo, then $\alex(\Pf^\flat(Q))$ is Noetherian.
\item If $Q$ is a wqo, then $\upper(\Pf^\flat(Q))$ is Noetherian.
\item If $Q$ is a wqo, then $\upper(\Pf^\sharp(Q))$ is Noetherian.
\item If $Q$ is a wqo, then $\upper(\Ps^\flat(Q))$ is Noetherian.
\item If $Q$ is a wqo, then $\upper(\Ps^\sharp(Q))$ is Noetherian.
\end{enumerate}
\end{MainTheorem}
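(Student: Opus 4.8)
The plan is to show that $\aca$ implies each of (ii)--(vi) and, conversely, that each of (ii)--(vi) implies $\aca$ over $\rca$. The preliminary task is to make sense of assertions such as ``$\upper(\Ps^\flat(Q))$ is Noetherian'' inside $\rca$, since $\Ps(Q)$ is not a set of the model; I would do this in the paper's framework for second-countable spaces, presenting each of $\alex$ and $\upper$ of $\Ps^\flat(Q)$, $\Ps^\sharp(Q)$, $\Pf^\flat(Q)$, $\Pf^\sharp(Q)$ by an explicit countable family of basic open sets, so that ``closed set'', ``descending sequence of closed sets'', and ``Noetherian'' become arithmetical notions. Several implications among (ii)--(vi) are then soft facts available already in $\rca$. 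Because $\alex$ is finer than $\upper$, statement (ii) gives (iii); because a topology coarser than a Noetherian one is Noetherian and the upper topology on $\Pf^\flat(Q)$ is coarser than the subspace topology it inherits from $\upper(\Ps^\flat(Q))$, statement (v) also gives (iii); and the same ``coarser-than-subspace'' observation shows (vi) gives (iv). So the remaining content is to prove (ii), (iv), and (v) in $\aca$, and to derive $\aca$ from (iii) and from (iv).

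For (ii), note that $\alex(\Pf^\flat(Q))$ is Noetherian precisely when $\Pf^\flat(Q)$ is a wqo, so (ii) is the Erd\H{o}s--Rado fact that $\Pf^\flat(Q)$ is a wqo whenever $Q$ is, and I would prove it in a form that lives in $\aca$ rather than via a minimal-bad-sequence argument. Given a putative bad sequence $(A_n)$ in $\Pf^\flat(Q)$, that is, $A_m\nleq_Q^\flat A_n$ for $m<n$, one uses $\aca$ to pick, along a nested sequence of infinite index sets, indices $n_0<n_1<\cdots$ together with elements $a_{n_k}\in A_{n_k}$ such that $a_{n_j}\notin (A_{n_k})\da$ whenever $j<k$; since $Q$ is a wqo there are $j<k$ with $a_{n_j}\leq_Q a_{n_k}$, and then $a_{n_j}\in(A_{n_k})\da$ because $a_{n_k}\in A_{n_k}$, a contradiction. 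For (iv) and (v) the task is to reprove Theorem~\ref{thm-GL} by an elementary argument carried out inside $\aca$, replacing Goubault-Larrecq's proof via sobrifications and powerdomains. The starting point is that, by Proposition~\ref{prop-WQOvsTopInRCA}, $\rca$ already knows a wqo has Noetherian Alexandroff and upper topologies, and this handles \emph{principal} closed sets of $\upper(\Ps^\flat(Q))$ and $\upper(\Pf^\sharp(Q))$ outright: a descending chain of such sets corresponds to an ascending chain of upward-closed subsets of $Q$, which stabilizes. The real work is with arbitrary closed sets, which are intersections of finite unions of principal ones. Given a strictly descending chain $(F_n)$ of closed sets, one chooses $A_n\in F_n\setminus F_{n+1}$, uses closedness of $F_{n+1}$ to extract a finite family of principal closed sets covering $F_{n+1}$ but omitting $A_n$, and then, running a pigeonhole and nesting argument on these finite families in the same spirit as for (ii), forces the $A_n$ to eventually ``generate'', contradicting well-foundedness of $Q$ and of $\Pf^\flat(Q)$, which $\aca$ knows to be a wqo. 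Every auxiliary object produced along the way is arithmetical in $(F_n)$, so $\aca$ is enough, and in $\aca$ the $\leq_Q^\sharp$-equivalence of each $A\in\Ps(Q)$ with a finite set upgrades (iv) to (vi).

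For the reversals it suffices to derive $\aca$ from (iii) and from (iv), since the other statements each imply one of these over $\rca$. Fix an injection $f\colon\Nb\imp\Nb$; the goal is that $\ran(f)$ exists. I would construct, in $\rca$ and uniformly from $f$, a quasi-order $Q_f$ that $\rca$ proves is a wqo but whose order relation codes the graph of $f$, together with an explicit sequence $(F_n)$ of closed sets of the relevant powerspace over $Q_f$ that $\rca$ proves is descending; the construction is arranged so that the $F_n$ are non-principal and that any $N$ witnessing stabilization of $(F_n)$ determines $\ran(f)$ below some unbounded computable function of $N$, the idea being that the chain can cease to descend only once $f$ has revealed the corresponding part of its range. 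Applying (iii), respectively (iv), to $Q_f$ then produces such an $N$ for every threshold, so $\ran(f)$ exists, and it is standard that this yields $\aca$ over $\rca$.

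The step I expect to be the main obstacle is the elementary, $\aca$-bounded reproof of Theorem~\ref{thm-GL}. The genuine combinatorial reason that a descending chain of closed sets in $\upper(\Ps^\flat(Q))$ or $\upper(\Pf^\sharp(Q))$ must stabilize is more delicate than the wqo-preservation argument behind (ii), because here the closed sets are two-quantifier objects, namely intersections of finite unions of principal closed sets, and one must control how the separating finite families interact across the entire chain while keeping every construction arithmetical. A secondary difficulty is designing the reversal order $Q_f$ so that it is provably a wqo in $\rca$ while its Hoare and Smyth powerspaces nonetheless ``feel'' the missing range of $f$ through a definable non-stabilizing chain of non-principal closed sets.
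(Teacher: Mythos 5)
Your reduction of the reversals to (iii) and (iv), and your forward directions for (ii) and (v), are broadly on track (for (v) the key point is simply that finite sets $A_n \in F_{m_n}\setminus F_{m_n+1}$ form a bad sequence in $\Pf^\flat(Q)$, since effectively closed sets are $\leq_Q^\flat$-downward closed). But your reversal strategy has a fatal flaw. You propose to build $Q_f$ that $\rca$ \emph{proves} is a wqo, apply (iii) positively to a descending chain $(F_n)$, and read $\ran(f)$ off the stabilization point $N$. A single application of the Noetherian principle returns only a number $N$, and anything computable from $f$ together with finitely much extra information is computable from $f$ alone; since $\ran(f)\not\leqT f$ in general, no single $N$ can determine $\ran(f)$, however the chain is designed. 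Your fallback --- one chain per threshold $k$, producing $N_k$ --- founders on collecting the witnesses: ``$(F^{(k)}_n)_n$ stabilizes by $N$'' is a $\Pi^0_2$ property of $N$, so $\rca$ cannot search for the least such $N_k$, and $\rca$ has no choice principle turning $\forall k\,\exists N\,\psi(k,N)$ of that complexity into a function $k\mapsto N_k$. (If the stabilization property could be arranged to be $\Sigma^0_1$ so as to permit the search, $\ran(f)$ would again be $f$-computable, a contradiction.)

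The reversal has to run contrapositively, which is what the paper does: it builds a recursive partial order $Q=\Xi_f(P,x)$ (copies of a fixed finite $P$ stacked above or below earlier copies according to the true-stage behaviour of $f$) that is \emph{not} provably a wqo; $\rca$ proves outright that $\upper(\Pf^\flat(Q))$ (resp.\ $\upper(\Pf^\sharp(Q))$, for a different $P$) is not Noetherian by exhibiting an explicit non-stabilizing descending chain of basic closed sets, and separately proves that any bad sequence in $Q$ yields a $\Sigma^0_1$ definition of the set of $f$-true stages, hence $\ran(f)$ by $\Delta^0_1$ comprehension. The hypothesis, in contrapositive form, then hands you a bad sequence --- an \emph{infinite} witness, which is exactly what the obstruction above demands. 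Separately, your sketch of the forward direction for (iv)/(vi) is too thin: the sets $A_n\in F_n\setminus F_{n+1}$ form a bad sequence only in $\Pf^\sharp(Q)$, which need not be a wqo even when $Q$ is (Rado's example), so one must instead manufacture a bad sequence in $Q$ itself, which the paper does by a stabilization argument on the chains $F_n\cap\bigcap_{i<k}\{q_i\}\da^\sharp$ using arithmetical bounding; you flagged this as the main obstacle, but it is the reversal where your proposed route cannot be repaired.
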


The following table summarizes the logical strengths of implications such as ``if 
$Q$ is a wqo, then $\Pf^\flat(Q)$ is a wqo'' and  ``if $Q$ is a wqo, then $
\upper(\Ps^\flat(Q))$ is Noetherian.''  The entry `false' indicates that the 
corresponding implication is false, as witnessed by Rado's example.  The entry `$
\rca$' indicates that the corresponding implication is provable in $\rca$.  The entry 
`$\aca$' indicates that the corresponding implication is equivalent to $\aca$ over $
\rca$.  The entry `$\leq \aca$' indicates that the corresponding implication is 
provable in $\aca$ but a reversal is not yet known.  The table also provides 
references for the true implications.

\begin{center}
\begin{tabular}{|l|c|c|c|c|}
\hline
	& \makecell{$\Ps^\flat(Q)$}
	& \makecell{$\Ps^\sharp(Q)$}
	& \makecell{$\Pf^\flat(Q)$}
	& \makecell{$\Pf^\sharp(Q)$} \\ 
\hline
\makecell{$Q$ wqo $\Imp$ $\bullet$ wqo}
	& \makecell{false}
	& \makecell{false}
	& \makecell{$\aca$ \\
		\cite[5.10]{M2005} \\ Thm~\ref{thm-FlatWQOinACA}}
	& \makecell{false} \\
\hline
\makecell{$Q$ bqo $\Imp$ $\bullet$ bqo}
	& \makecell{$\leq \aca$ \\ \cite[5.4]{M2005}}
	& \makecell{$\rca$ \\ \cite[5.6]{M2005}}
	& \makecell{$\rca$ \\ \cite[5.4]{M2005}}
	& \makecell{$\rca$ \\ \cite[5.4]{M2005}} \\
\hline
\makecell{$Q$ wqo $\Imp$ $\alex(\bullet)$ Noeth.}
	& \makecell{false}
	& \makecell{false}
	& \makecell{$\aca$ \\ Thm~\ref{thm-BigEquiv}}
	& \makecell{false}  \\ 
\hline
\makecell{$Q$ wqo $\Imp$ $\upper(\bullet)$ Noeth.}
	& \makecell{$\aca$ \\ Thm~\ref{thm-BigEquiv}}
	& \makecell{$\aca$ \\ Thm~\ref{thm-BigEquiv}}
	& \makecell{$\aca$ \\ Thm~\ref{thm-BigEquiv}}
	& \makecell{$\aca$ \\ Thm~\ref{thm-BigEquiv}} \\
\hline
\end{tabular}
\end{center}

If $Q$ is a countable quasi-order, then $\Pf(Q)$ is also countable and hence
easy to manage in second-order arithmetic. The spaces $\alex(\Pf^\flat(Q))$,
$\upper(\Pf^\flat(Q))$, and $\upper(\Pf^\sharp(Q))$ fit very nicely into
Dorais's framework of countable second-countable spaces in second-order
arithmetic \cite{Dorais:2011uv}, and so we consider the equivalence of items
(i)--(iv) in Theorem~\ref{thm-BigEquiv} as not only contributing to the
reverse mathematics of wqo's but also as a proof-of-concept example of the
usefulness of Dorais's framework.

On the other hand, if $Q$ is infinite, then $\Ps(Q)$ is uncountable and
hence neither it nor the basic open sets of $\upper(\Ps^\flat(Q))$ and
$\upper(\Ps^\sharp(Q))$ exist as sets in second-order arithmetic. Thus for
items~(v) and (vi) of Theorem~\ref{thm-BigEquiv}, we code
$\upper(\Ps^\flat(Q))$ and $\upper(\Ps^\sharp(Q))$ using a scheme that is
broadly similar to the usual coding of complete separable metric spaces in
second-order arithmetic (as detailed in \cite[Section~II.5]{Simpson:2009vv},
for example). This scheme can be adapted to deal with quite general
second-countable topological spaces, including the countably based MF spaces
of \cite{Mummert06}.

To prove the reversals of Theorem~\ref{thm-BigEquiv}, we isolate a way of
constructing recursive partial orders such that every sequence witnessing
that such a partial order is not a wqo computes $0'$. These partial orders
generalize the recursive linear order of type $\omega+\omega^*$ used in
\cites{MarSho11,FriMar12,FriMar14} in which every sequence witnessing that
the linear order is not a well-order computes $0'$. The construction is
introduced in Definition~\ref{def-Xi}, and its main property is proved in
Lemma~\ref{lem-ACAreversal}.

The plan of the paper is as follows. In Section~\ref{sec-background} we give
some background concerning reverse mathematics in general, the reverse
mathematics of well-quasi-orders, and Dorais's coding of countable
second-countable topological spaces. Section~\ref{sec-Noeth} covers the
details of expressing the notion of Noetherian space in second-order
arithmetic, both in the countable second-countable case and in the
uncountable case. In this section we also show that $\aca$ proves statements
(ii)-(vi) of Theorem~\ref{thm-BigEquiv}. The reversals of these implications
are proved in Section~\ref{sec-reversals}, using the construction mentioned
in the previous paragraph.

\section{Background}\label{sec-background}

\subsection{Reverse mathematics}

Reverse mathematics is a foundational program introduced by Friedman
\cite{Friedman} with the goal of classifying the theorems of ordinary
mathematics by their proof-theoretic strengths. Theorem $\varphi$ is
considered stronger than theorem $\psi$ if $\varphi$ requires stronger axioms
to prove than $\psi$ does or, equivalently, if $\varphi$ implies $\psi$ but
not conversely over some fixed weak base theory. The usual setting for
reverse mathematics is second-order arithmetic. The
language of second-order arithmetic is a two-sorted language, with
first-order variables (intended to range over natural numbers) and
second-order variables (intended to range over sets of natural numbers), and
the membership relation to connect the two sorts. In this setting a
remarkable phenomenon is that a natural theorem $\varphi$ is most often
equivalent to some well-known theory $T$ over the base theory $B$. This means
that $T \vdash \varphi$ and that $B + \varphi \vdash \psi$ for every $\psi
\in T$. The proofs of the axioms of $T$ from $B + \varphi$ is called a
\emph{reversal}, from which `reverse mathematics' gets its name. This article
is only concerned with the standard base theory $\rca$ and the theory $\aca$,
so we give only the definitions of these theories and refer the reader
to~\cite{Simpson:2009vv} for a comprehensive treatment of the reverse
mathematics program.

The axioms of $\rca$ are: a first-order sentence expressing that $\Nb$ is a
discretely ordered commutative semi-ring with identity; the
\emph{$\Sigma^0_1$ induction scheme}, which consists of the universal
closures (by both first- and second-order quantifiers) of all formulas of the
form
\begin{align*}
[\varphi(0) \andd \forall n(\varphi(n) \imp \varphi(n+1))] \imp \forall n \varphi(n),
\end{align*}
where $\varphi$ is $\Sigma^0_1$; and the \emph{$\Delta^0_1$ comprehension
scheme}, which consists of the universal closures (by both first- and
second-order quantifiers) of all formulas of the form
\begin{align*}
\forall n (\varphi(n) \biimp \psi(n)) \imp \exists X \forall n(n \in X \biimp \varphi(n)),
\end{align*}
where $\varphi$ is $\Sigma^0_1$, $\psi$ is $\Pi^0_1$, and $X$ is not free in
$\varphi$.

The system $\rca$ is taken as the standard base system in reverse
mathematics. The name `$\rca$' stands for `recursive comprehension axiom',
which refers to the $\Delta^0_1$ comprehension scheme because a set $X$ is
$\Delta^0_1$ in a set $Y$ if and only if $X$ is recursive in $Y$. Thus in
$\rca$, to define a set by comprehension, one must compute that set from an
existing set. For this reason, we think of $\rca$ as capturing what might be
called `recursive mathematics' or `effective mathematics'. The subscript
`$0$' in `$\rca$' refers to the fact that induction in $\rca$ is limited to
$\Sigma^0_1$ formulas (and to $\Pi^0_1$ formulas because $\rca$ proves the
$\Pi^0_1$ induction scheme; see \cite[Corollary~II.3.10]{Simpson:2009vv}).
Despite being a weak system, several interesting and familiar theorems are
provable in $\rca$, such as the intermediate value theorem and the fact that
every field has an algebraic closure (though $\rca$ does \emph{not} suffice
to prove that algebraic closures are unique). See
\cite[Chapter~II]{Simpson:2009vv} for more about $\rca$.

The axioms of $\aca$ are a first-order sentence expressing that $\Nb$ is a
discretely ordered commutative semi-ring with identity; the \emph{induction
axiom}
\begin{align*}
\forall X [[0 \in X \andd \forall n(n \in X \imp n+1 \in X)] \imp \forall n (n \in X)];
\end{align*}
and the \emph{arithmetical comprehension scheme}, which consists of the
universal closures (by both first- and second-order quantifiers) of all
formulas of the form
\begin{align*}
\exists X \forall n(n \in X \biimp \varphi(n)),
\end{align*}
where $\varphi$ is an arithmetical formula in which $X$ is not free.  Equivalently, $
\aca$ may be obtained by adding the arithmetical comprehension scheme to the
axioms of $\rca$.

The name `$\aca$' stands for `arithmetical comprehension axiom', which refers
to the arithmetical comprehension scheme. The subscript `$0$' in `$\aca$'
refers to the fact that induction in $\aca$ is essentially limited to
arithmetical formulas, which is what can be derived from the induction axiom
and the arithmetical comprehension scheme. In terms of computability, $\aca$
can be characterized by adding the statement ``for every set $X$, the Turing
jump of $X$ exists'' to $\rca$. Many familiar theorems are equivalent to
$\aca$ over $\rca$, such as the Bolzano-Weierstra{\ss} theorem, the fact that
every vector space has a basis, the fact that every commutative ring has a
maximal ideal (the existence of prime ideals is weaker), K\"onig's lemma, and
Ramsey's theorem for $k$-tuples for any fixed $k \geq 3$ (Ramsey's theorem
for pairs is weaker, and Ramsey's theorem for arbitrary tuples is stronger).
See \cite[Chapter~III]{Simpson:2009vv} for more about $\aca$.

A common strategy for proving that a theorem reverses to $\aca$ over $\rca$
is to take advantage of the following lemma, which states that $\aca$ is
equivalent over $\rca$ to the statement that every injection has a range.

\begin{Lemma}[{\cite[Lemma~III.1.3]{Simpson:2009vv}}]\label{lem-ACAinjection}
The following are equivalent over $\rca$.
\begin{itemize}
\item[(i)] $\aca$.
\item[(ii)] If $f \colon \Nb \imp \Nb$ is an injection, then there is a set
    $X$ such that
\begin{align*}
\forall n(n \in X \biimp \exists m(f(m) = n)).
\end{align*}
\end{itemize}
\end{Lemma}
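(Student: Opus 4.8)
The plan is to prove the two implications separately, with essentially all of the substance lying in the reversal (ii)$\Imp$(i). For (i)$\Imp$(ii), I would simply note that the formula $\exists m(f(m)=n)$ is $\Sigma^0_1$, hence arithmetical, so the set $X$ with $\forall n(n\in X \biimp \exists m(f(m)=n))$ exists by the arithmetical comprehension scheme available in $\aca$; injectivity of $f$ plays no role here.

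For (ii)$\Imp$(i), the first move is a reduction: over $\rca$ it suffices to show that the range of an \emph{arbitrary} function $g\colon\Nb\imp\Nb$ exists, since this ``$\Sigma^0_1$ comprehension'' principle bootstraps up to full arithmetical comprehension by induction on quantifier complexity --- at the inductive step one realizes a $\Sigma^0_{k+1}$ set as the range of a function reading off the first coordinate of pairs in an already-constructed $\Pi^0_k$ set (cf.\ \cite[Section~III.1]{Simpson:2009vv}). Given such a $g$, the idea is to replace it by an injection that carries the same information: set $f(n)=\la g(n),\, |\{m<n : g(m)=g(n)\}|\ra$. The auxiliary counting function $n\mapsto |\{m<n : g(m)=g(n)\}|$ is computable from $g$ by bounded search, so $f$ exists in $\rca$ via $\Delta^0_1$ comprehension. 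One then checks that $f$ is injective: if $n<n'$ and $f(n)=f(n')$, then $g(n)=g(n')$, so the index $n$ is counted at stage $n'$ but not at stage $n$, forcing the second coordinates of $f(n)$ and $f(n')$ to differ --- a contradiction. By (ii) the set $\ran(f)$ exists, and because the least $g$-preimage of any value receives second coordinate $0$, we have $k\in\ran(g)\biimp\la k,0\ra\in\ran(f)$; hence $\ran(g)$ exists by $\Delta^0_1$ comprehension with $\ran(f)$ as a parameter.

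The parts I expect to require care are all bookkeeping: confirming that the counting function and the various ``sections'' and ``projections'' are genuinely $\Delta^0_1$ (so that $\rca$ can form them), verifying the injectivity of $f$ cleanly, and running the bootstrap from the range principle up to arithmetical comprehension while keeping track of the correct $\Sigma^0_k$/$\Pi^0_k$ classification at each stage. None of this is deep; the one genuine idea is the injectivization trick, whose point is precisely that restricting the hypothesis of (ii) to injections does not weaken it.
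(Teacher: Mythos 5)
Your proof is correct: the forward direction is immediate from arithmetical comprehension, and the reversal via the injectivization trick $f(n)=\la g(n), |\{m<n : g(m)=g(n)\}|\ra$ followed by the bootstrap from $\Sigma^0_1$ comprehension to arithmetical comprehension is exactly the standard argument. The paper itself gives no proof of this lemma --- it is quoted directly from \cite[Lemma~III.1.3]{Simpson:2009vv} --- and your argument is essentially the one found there.
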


\subsection{Well-quasi-orders in second-order arithmetic}

The reverse mathematics of wqo and bqo theory is a vibrant area with many
results and many open problems, and we refer the reader to~\cite{M2005} for a
thorough introduction. Here we simply present the basic information needed
for the work at hand.

In $\rca$ we can easily give the definition of quasi-order made in the
introduction. In $\rca$, the official definition of a well-quasi-order is the
following.

\begin{Definition}[$\rca$]\label{def-wqo}
A \emph{well-quasi-order} (\emph{wqo}) is a quasi-order $Q$ such that for
every function $f \colon \Nb \imp Q$, there are $m,n \in \Nb$ with $m < n$
such that $f(m) \leq_Q f(n)$.
\end{Definition}

We usually think of a function $f \colon \Nb \imp Q$ as a sequence $(q_n)_{n
\in \Nb}$ of elements of $Q$, in which case Definition~\ref{def-wqo} states
that a quasi-order $Q$ is a wqo if for every such sequence there are $m,n \in
\Nb$ with $m < n$ such that $q_m \leq_Q q_n$. Thus $Q$ is not a wqo if and
only if there is an infinite so-called \emph{bad sequence} $(q_n)_{n \in
\Nb}$ such that $\forall m \forall n(m < n \imp q_m \nleq_Q q_n)$. For
convenience, we also define a finite sequence $(q_n)_{n<k}$ to be bad if
$\forall m \forall n(m<n<k \imp q_m \nleq_Q q_n)$.

Several of the well-known classically equivalent definitions of
well-quasi-order are not equivalent over $\rca$. For example, $\rca$ proves
that if $Q$ is a wqo according to Definition~\ref{def-wqo}, then $Q$ has no
infinite strictly descending chains and no infinite antichains~\cite{M2005}.
However, the reverse implication, that a quasi-order with no infinite
strictly descending chains and no infinite antichains is a wqo according to
Definition~\ref{def-wqo}, is equivalent to $\mathsf{CAC}$ over $\rca$, where
$\mathsf{CAC}$ states that every infinite partial order has an infinite chain
or an infinite antichain \cites{ChoMarSol04,FrittaionThesis}.  Thus the
equivalence of these two definitions of wqo is provable in $\rca +
\mathsf{CAC}$ but not in $\rca$.

By using the usual coding of finite subsets of $\Nb$ as elements of $\Nb$,
one readily sees that $\rca$ proves that if $Q$ is a quasi-order, then
$\Pf(Q)$,
\begin{align*}
\leq_Q^\flat = \{(\bs a, \bs b) \in \Pf(Q) \times \Pf(Q) :
(\forall a \in \bs a)(\exists b \in \bs b)(a \leq_Q b)\},
\end{align*}
and
\begin{align*}
\leq_Q^\sharp = \{(\bs a, \bs b) \in \Pf(Q) \times \Pf(Q) : (\forall b \in \bs b)(\exists 
a
\in \bs a)(a \leq_Q b)\}
\end{align*}
all exist as sets. The proof that if $Q$ is a
quasi-order then $\Pf^\flat(Q)$ and $\Pf^\sharp(Q)$ are both quasi-orders is
also straightforward in $\rca$.

A little care must be taken to describe $\Ps^\flat(Q)$ and $\Ps^\sharp(Q)$ in
$\rca$. First, if $Q$ is infinite, then $\Ps(Q)$ is of course too big to
exist as a set in any subsystem of second-order arithmetic. Second, if $Q$ is
a quasi-order and $E \subseteq Q$, then $\rca$ proves that $E\da$ and $E\ua$
exist as sets when $E$ is finite, but in general $\aca$ is required to prove
that $E\da$ and $E\ua$ exist as sets when $E$ is infinite. Thus when working
in $\rca$, `$A \leq^\flat_Q B$' and `$A \leq^\sharp_Q B$' must be interpreted
by their respective defining formulas `$(\forall a \in A)(\exists b \in B)(a
\leq_Q b)$' and `$(\forall b \in B)(\exists a \in A)(a \leq_Q b)$.' Under
this interpretation, in $\rca$ one can prove that $A \leq_Q^\flat B
\leq_Q^\flat C \imp A \leq_Q^\flat C$ for all $A,B,C \subseteq Q$, one can
work with sequences $(A_n)_{n \in \Nb}$ of subsets of $Q$, and one can
consider whether or not there are $m, n \in \Nb$ with $m < n$ such that $A_m
\leq_Q^\flat A_n$. Using this approach, Marcone has shown the following
theorem.

\begin{Theorem}[{\cite[Theorem~5.4 and Theorem~5.6]{M2005}}]{\ }
\begin{itemize}
\item $\rca$ proves that if $Q$ is a bqo, then $\Pf^\flat(Q)$, $\Pf^\sharp(Q)$, and $
\Ps^\sharp(Q)$ are all bqo's.

\item $\aca$ proves that if $Q$ is a bqo, then $\Ps^\flat(Q)$ is a bqo.
\end{itemize}
\end{Theorem}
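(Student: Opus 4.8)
The plan is to use the barrier-theoretic presentation of bqo available in $\rca$ (as in~\cite{M2005}): a \emph{$Q$-array} is a map $f$ from a barrier $B$ — a thin block of finite subsets of an infinite set of naturals, coded as a set — into $Q$; $f$ is \emph{bad} if $f(s) \nleq_Q f(t)$ whenever $s,t \in B$ and $s \triangleleft t$ (where $s \triangleleft t$ means some infinite set has $s$ as an initial segment and has $t$ as an initial segment once its least element is deleted); and $Q$ is a bqo iff no $Q$-array is bad. We freely use the basic facts, all provable in $\rca$, that barriers are closed under restriction to an infinite set and under the relevant composition operations, that badness persists under restricting the barrier, and that from a bad array into a power quasi-order together with the bqo-ness of $Q$ one derives a contradiction via the Nash--Williams combinatorics, \emph{provided} the auxiliary arrays one needs actually exist in $\rca$. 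Each implication is proved by contraposition. One structural fact is used repeatedly: in a barrier every element $t$ has only \emph{finitely many} $\triangleleft$-predecessors (a predecessor has the form $\{a\} \cup t'$ with $t'$ an initial segment of $t$ and $a < \min t'$, so $a$ is bounded), whereas an element may well have infinitely many $\triangleleft$-successors.

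The three $\rca$ cases reduce to one another. Given a bad array $f$ into $\Pf^\flat(Q)$ or $\Pf^\sharp(Q)$, badness at a pair $s \triangleleft t$ is witnessed by a single element — some $a \in f(s) \setminus f(t)\da$, resp.\ some $b \in f(t) \setminus f(s)\ua$ — and since $f(s)$ and $f(t)$ are finite these sets are decidable, so the least such witness is a $\Delta^0_1$ function $w$ of the $\triangleleft$-pair. Applying the bqo-ness of $Q$ to the witness array over the barrier of $\triangleleft$-pairs of $B$ produces a configuration $s \triangleleft t \triangleleft u$ with $w(s,t) \leq_Q w(t,u)$, which is impossible since $w(s,t) \in f(t)$ but $w(t,u) \notin f(t)\ua$ (and symmetrically in the $\flat$ case). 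For $\Ps^\sharp(Q)$ one reduces to $\Pf^\sharp(Q)$: a bad array $f$ into $\Ps^\sharp(Q)$ may have infinite values, but one extracts a bad \emph{finite}-valued array $f'$ by recursion along an ordering of $B$ (first by $\max t$, then by decreasing cardinality) in which every $\triangleleft$-predecessor of $t$ precedes $t$, putting into $f'(t)$ one element of $f(t) \setminus f'(s)\ua$ for each of the finitely many $s \triangleleft t$; keeping the values finite keeps each $f'(s)\ua$ decidable and the recursion $\Delta^0_1$, and a suitable witness always exists because $f'(s) \subseteq f(s)$ and $f$ is bad.

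For $\Ps^\flat(Q)$ this last move is unavailable: here badness at $s \triangleleft t$ is witnessed by an element of $f(s)$ avoiding the set $f(t)\da$, which is possibly infinite — hence neither $\rca$-decidable nor $\rca$-constructible — and $t$ now ranges over the infinitely many $\triangleleft$-successors of $s$; moreover, unlike up-sets of a wqo, down-sets of a wqo need not be finitely generated, so no reduction to finite sets is possible. Instead one works directly with the infinite down-closures, invoking $\aca$ to form the sequence $\bl (f(t))\da \br_{t \in B}$ and to carry through the Nash--Williams argument (selecting, at the relevant nodes, an element of $f(s)$ lying outside all the pertinent later down-closures); with these sets available, the combinatorial core proceeds as before and yields a bad $Q$-array.

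The main obstacle is twofold. First, one must carry the Nash--Williams barrier combinatorics — in particular the construction of the barrier of $\triangleleft$-pairs, the verification that the refinements used are again barriers, and the translation of a good comparison of a derived array back into a comparison of the original array — using only $\Sigma^0_1$ induction. Second, one must pin down exactly the comprehension that is unavoidable in the $\Ps^\flat(Q)$ case so as to be sure that $\aca$ suffices. (As the table records, no reversal for the $\Ps^\flat$ case is known, so a matching lower bound is not to be expected here.)
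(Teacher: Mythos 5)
The paper does not prove this statement; it is imported verbatim from \cite{M2005}, so there is no in-paper proof to compare against. Your treatment of $\Pf^\flat(Q)$ and $\Pf^\sharp(Q)$ is the standard argument (pass to the barrier $B^2$ of $\triangleleft$-pairs, extract a $\Delta^0_1$ witness from the finite sets, derive a bad $Q$-array) and, modulo the deferred barrier combinatorics --- which is indeed available in $\rca$ from Marcone's work --- it is fine; your diagnosis of why $\Ps^\flat(Q)$ needs $\aca$ (the witness lives in $f(s)$ and must avoid the possibly infinite, only $\Pi^0_1$-decidable set $f(t)\da$, and down-sets of a wqo need not be finitely generated) is also correct and is the expected explanation.

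The genuine gap is in the $\Ps^\sharp(Q)$ case, and it sits exactly on the ``structural fact'' you use repeatedly: under the definition of $\triangleleft$ you state ($s \sqsubset X$ and $t \sqsubset X \setminus \{\min X\}$ for some infinite $X$), an element of a barrier can have \emph{infinitely many} $\triangleleft$-predecessors. The relation $s \triangleleft t$ also covers the configuration $t \sqsubsetneq s \setminus \{\min s\}$, in which $s$ contains elements above $\max t$; thinness does not exclude it. Concretely, $B = \{\{1\}\} \cup \{\{0,1,n\} : n \geq 2\} \cup \{\{0,m\} : m \geq 2\} \cup \{\{m\} : m \geq 2\}$ is a thin block on $\Nb$, and $\{0,1,n\} \triangleleft \{1\}$ for every $n \geq 2$. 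This single configuration simultaneously destroys (a) the finiteness of $f'(t)$ (it must contain one witness per predecessor) and (b) your recursion order: when $\max s > \max t$ and $s \triangleleft t$, ordering by $\max$ and then by decreasing cardinality processes $t$ before its predecessor $s$, so $f'(s)$ is not yet available when the witness for the pair $(s,t)$ must be chosen. Your parenthetical description of predecessors as $\{a\} \cup t'$ with $t'$ an initial segment of $t$ tacitly assumes $|s| \leq |t|+1$, i.e.\ the other definition of $\triangleleft$; reconciling the two, or passing to a sub-barrier on which the bad configuration never occurs, is not free --- the natural route uses the barrier partition theorem, which is far beyond $\rca$. So the reduction of $\Ps^\sharp(Q)$ to $\Pf^\sharp(Q)$ does not go through as written, and repairing it is precisely the nontrivial content of \cite{M2005}*{Theorem 5.6} that your sketch elides.
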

The reversal for the second item in the above theorem remains open.

Theorem~5.10 of \cite{M2005} states that $\aca$ is equivalent to the
statement ``if $Q$ is a wqo then $\Pf^\flat(Q)$ is a wqo'' over $\rca +
\rt^2_2$, where $\rt^2_2$ is Ramsey's theorem for pairs and two colors. In
the proof of the reversal of this theorem, $\rt^2_2$ is only used to prove
that $Q \times R$ is a wqo (where $(q_0,r_0) \leq_{Q \times R} (q_1,r_1)$ if
and only if $q_0 \leq_Q q_1$ and $r_0 \leq_R r_1$) whenever $Q$ and $R$ are
wqo's. Notice that by \cite[Corollary 4.7]{ChoMarSol04} $\rca$ and even the
stronger system $\wkl$ do not suffice to prove this statement. We eliminate
$\rt^2_2$ from the reversal, thereby improving the result of \cite{M2005},
via the following lemma.

\begin{Lemma}[$\rca$]\label{lem-FlatIsACAHelper}
Suppose that $\Pf^\flat(Q)$ is a wqo whenever $Q$ is a wqo. Then $Q \times R$
is a wqo whenever $Q$ and $R$ are wqo's.
\end{Lemma}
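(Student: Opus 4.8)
The plan is to encode a pair of wqo's $Q$ and $R$ into a single wqo to which we can apply the hypothesis about $\Pf^\flat$. The natural idea is to form the disjoint union $Q \sqcup R$: if $Q$ and $R$ are wqo's, then $\rca$ proves that $Q \sqcup R$ is a wqo as well (given a bad sequence in $Q \sqcup R$, at least one of the two ``colors'' occurs infinitely often, and using $\bso$ — available in $\rca$ — one extracts an infinite bad subsequence lying entirely in $Q$ or entirely in $R$, a contradiction). So by hypothesis $\Pf^\flat(Q \sqcup R)$ is a wqo. The key observation is then that $Q \times R$ embeds into $\Pf^\flat(Q \sqcup R)$ via the map $(q,r) \mapsto \{q, r\}$ (viewing $q \in Q$ and $r \in R$ as elements of the disjoint union). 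One checks that $(q_0, r_0) \leq_{Q \times R} (q_1, r_1)$ if and only if $\{q_0, r_0\} \leq_{Q \sqcup R}^\flat \{q_1, r_1\}$: the forward direction is immediate, and for the reverse direction, $\{q_0, r_0\} \leq^\flat \{q_1, r_1\}$ means each of $q_0, r_0$ is below some element of $\{q_1, r_1\}$; since the two summands are incomparable in $Q \sqcup R$, necessarily $q_0 \leq_Q q_1$ and $r_0 \leq_R r_1$. Thus a bad sequence in $Q \times R$ would push forward to a bad sequence in $\Pf^\flat(Q \sqcup R)$, contradicting that the latter is a wqo.

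Concretely, I would carry out the steps in this order. First, define the disjoint union $Q \sqcup R$ as a quasi-order (say on $\{0\} \times Q \cup \{1\} \times R$ with the obvious order, and no relations across the two pieces); this clearly exists as a set in $\rca$. Second, prove in $\rca$ that $Q \sqcup R$ is a wqo whenever $Q$ and $R$ are: given $f \colon \Nb \to Q \sqcup R$, use $\Sigma^0_1$ bounding / the infinite pigeonhole principle available in $\rca$ to find an infinite subset on which $f$ has constant first coordinate, then apply the wqo property of the relevant summand to find the required $m < n$. (Care is needed to get an actual infinite set and not merely an unbounded one, but this is standard in $\rca$.) Third, apply the hypothesis to conclude that $\Pf^\flat(Q \sqcup R)$ is a wqo. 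Fourth, verify the embedding equivalence $(q_0, r_0) \leq_{Q \times R} (q_1, r_1) \Leftrightarrow \{(0,q_0),(1,r_0)\} \leq^\flat_{Q \sqcup R} \{(0,q_1),(1,r_1)\}$ spelled out above. Fifth, conclude: an arbitrary $f \colon \Nb \to Q \times R$ composes with $(q,r) \mapsto \{(0,q),(1,r)\}$ to give $g \colon \Nb \to \Pf^\flat(Q \sqcup R)$, and from the $m < n$ with $g(m) \leq^\flat g(n)$ we recover $f(m) \leq_{Q \times R} f(n)$.

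I do not expect any serious obstacle here; the argument is elementary and entirely within $\rca$. The one point that requires mild attention is Step two — extracting from an arbitrary $f \colon \Nb \to Q \sqcup R$ an honest infinite subsequence lying in a single summand. This uses only the fact that $\rca$ proves the infinite pigeonhole principle for two-part partitions of $\Nb$ (equivalently, $\bso$, which $\rca$ proves), so there is no hidden use of $\rt^2_2$ or any stronger principle — which is precisely the point of the lemma, since it is what lets us remove $\rt^2_2$ from the reversal in \cite{M2005}. A secondary bookkeeping point is that we should work with finite sets of \emph{size at most two} rather than exactly two, to handle the degenerate case where the images of $q$ and $r$ in $Q \sqcup R$ happen to coincide (they cannot, actually, since they lie in different summands, so even this is a non-issue). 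Everything else is a routine unwinding of definitions.
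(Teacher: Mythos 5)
Your proposal is correct and follows essentially the same route as the paper: form the disjoint sum $Q \oplus R$ (a wqo in $\rca$, which the paper gets by citing \cite[Lemma 5.13]{M2005} rather than reproving via pigeonhole), apply the hypothesis to get that $\Pf^\flat(Q \oplus R)$ is a wqo, and map $(q,r)$ to the two-element set $\{(q,0),(r,1)\}$, noting that $\leq^\flat$ between such sets forces comparability within each summand. The only differences are cosmetic.
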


\begin{proof}
Let $Q$ and $R$ be wqo's, and let $((q_n,r_n))_{n \in \Nb}$ be a sequence of
elements from $Q \times R$. We need to find $m$ and $n$ in $\Nb$ with $m < n$
such that $(q_m,r_m) \leq_{Q \times R} (q_n,r_n)$. To this end, let $Q \oplus
R$ be the disjoint sum of $Q$ and $R$, where $Q \oplus R = (Q \times \{0\})
\cup (R \times \{1\})$ and $(x,i) \leq_{Q \oplus R} (y,j)$ if and only if
($i=j=0 \andd x \leq_Q y) \orr (i=j=1 \andd x \leq_R y)$. It is easy to see
that $Q \oplus R$ is a quasi-order, and by \cite[Lemma 5.13]{M2005} it is a
wqo. By hypothesis, $\Pf^\flat(Q \oplus R)$ is also a wqo. Consider now the
sequence $(\{(q_n,0), (r_n,1)\})_{n \in \Nb}$ of elements of $\Pf^\flat(Q
\oplus R)$, and let $m$ and $n$ in $\Nb$ be such that $m < n$ and $\{(q_m,0),
(r_m,1)\} \leq_{Q \oplus R}^\flat \{(q_n,0), (r_n,1)\}$. It must be that $q_m
\leq_Q q_n$ and $r_m \leq_R r_n$, so we have our desired $m < n$ such that
$(q_m,r_m) \leq_{Q \times R} (q_n,r_n)$.
\end{proof}

\begin{Theorem}\label{thm-FlatWQOinACA}
The following are equivalent over $\rca$.
\begin{itemize}
\item[(i)] $\aca$.
\item[(ii)] If $Q$ is a wqo, then $\Pf^\flat(Q)$ is a wqo.
\end{itemize}
\end{Theorem}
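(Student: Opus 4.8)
The plan is to prove the two directions separately. For the direction $\mathrm{(i)} \imp \mathrm{(ii)}$, I would work in $\aca$ and show directly that if $Q$ is a wqo then $\Pf^\flat(Q)$ is a wqo. Given a bad sequence $(\bs a_n)_{n \in \Nb}$ in $\Pf^\flat(Q)$, the standard classical argument (the minimal bad sequence argument, or rather its simpler cousin for $\Pf^\flat$) combines a wqo property of $Q$ with an infinitary pigeonhole / Ramsey-type step: one repeatedly extracts elements $a_n \in \bs a_n$ witnessing badness and finds an infinite subsequence along which the chosen witnesses are $\leq_Q$-increasing, contradicting badness. Since $\aca$ proves $\rt^2_2$ and more generally suffices to carry out such extractions (arithmetical comprehension lets us form the relevant sets, e.g.\ the set of indices where a given element embeds), this direction goes through in $\aca$. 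Alternatively, and more efficiently, I expect the paper has already done the heavy lifting: it is classically known and provable in $\aca$ that $Q$ wqo implies $\Pf^\flat(Q)$ wqo, so I would simply cite that the forward direction is the (easier, already-known) implication and the novelty lies entirely in the reversal.

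The main work, and the main obstacle, is the reversal $\mathrm{(ii)} \imp \mathrm{(i)}$. By Lemma~\ref{lem-ACAinjection} it suffices, given an injection $f \colon \Nb \imp \Nb$, to produce the range of $f$ using (ii). The strategy is: from $f$, build (in $\rca$) a quasi-order $Q_f$ that is \emph{provably a wqo in $\rca$}, but such that any bad sequence in $\Pf^\flat(Q_f)$ computes $\ran(f)$; then by (ii), $\Pf^\flat(Q_f)$ is a wqo, so\ldots wait—that gives no bad sequence. So the correct shape is the contrapositive packaging: build $Q_f$ so that $Q_f$ \emph{is} a wqo (provably in $\rca$), and arrange that the assumption ``$\Pf^\flat(Q_f)$ is a wqo'' lets one \emph{decide} membership in $\ran(f)$. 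Concretely, I would attach to each potential range element $n$ a finite-set gadget in $Q_f$ whose $\leq_Q^\flat$-comparability with a reference gadget flips exactly when $n$ enters $\ran(f)$; running the wqo-ness of $\Pf^\flat(Q_f)$ against the sequence of these gadgets forces, for each $n$, a comparison $\bs a_m \leq_Q^\flat \bs a_n$ that is only possible once we know whether $n\in\ran(f)$, and $\Sigma^0_1$-bounded search through the wqo witness then extracts $\ran(f)$. This is exactly the kind of construction the paper advertises in Definition~\ref{def-Xi} and Lemma~\ref{lem-ACAreversal}: a recursive partial order every bad sequence of which computes $0'$, generalizing the $\omega+\omega^*$ trick of \cites{MarSho11,FriMar12,FriMar14}. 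I would therefore invoke that construction, and the crux is verifying (a) that $Q_f$, or the relevant partial order $\Xi$, is a wqo provably in $\rca$ (this needs a careful bad-sequence analysis avoiding $\aca$), and (b) that the $\Pf^\flat$-level bad sequence one would otherwise get encodes $\ran(f)$ in a $\Delta^0_1(\ran f)$—hence usable—way.

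I also expect to use Lemma~\ref{lem-FlatIsACAHelper} or its method in the reversal, since controlling products of wqo's is precisely where $\rt^2_2$ was previously needed, and the present improvement over \cite[Theorem~5.10]{M2005} is exactly the elimination of $\rt^2_2$; so the reversal should be arranged to only ever need the disjoint-sum construction $Q \oplus R$ (wqo by \cite[Lemma~5.13]{M2005}, provable in $\rca$) rather than $Q \times R$ directly, passing to products only at the end via Lemma~\ref{lem-FlatIsACAHelper}. The single hardest point, I anticipate, is item (a) above: showing in $\rca$—with only $\Sigma^0_1$ induction available—that the constructed order is a genuine wqo, because the natural proof that a finite-sequence-gadget order is well-quasi-ordered tends to use induction or comprehension of higher complexity, and threading that needle is where the technical care of Section~\ref{sec-reversals} must go.
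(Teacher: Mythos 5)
Your proposal lands on essentially the same approach as the paper: the forward direction is the known $\aca$ proof from \cite[Theorem~5.10]{M2005}, and the reversal is exactly that theorem's reversal with the single modification you identify in your last paragraph, namely deriving the wqo-ness of the product $L\times\omega$ from Lemma~\ref{lem-FlatIsACAHelper} (via the disjoint sum $Q\oplus R$) instead of from $\rt^2_2$. Your middle paragraph's ``gadget'' sketch is too vague to stand on its own, and the difficulty you single out (proving the constructed order is a wqo in $\rca$) dissolves into the usual dichotomy (either $L$ is not a well-order and $\ran(f)$ exists outright, or $L$ is a linear well-order and hence a wqo); your alternative route through Definition~\ref{def-Xi} and Lemma~\ref{lem-ACAreversal} is precisely the second proof the paper provides via Theorem~\ref{thm-FlatReversal}.
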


\begin{proof}
Use the proof of \cite[Theorem~5.10]{M2005}, but, in the reversal, prove that $L
\times \omega$ is a wqo using Lemma~\ref{lem-FlatIsACAHelper} instead of $
\rt^2_2$.
\end{proof}

Theorem~\ref{thm-FlatReversal} below provides a different proof of the
(ii)$\Imp$(i) implication in Theorem~\ref{thm-FlatWQOinACA}.

\subsection{Countable second-countable topological spaces in second-order
arithmetic}

A topological space is second-countable if it has a a countable base. Dorais
in~\cite{Dorais:2011uv} provides the appropriate definitions for working with
countable second-countable spaces in $\rca$. We situate our work in his
framework.

\begin{Definition}[$\rca$; {\cite[Definition~2.1]{Dorais:2011uv}}]
A \emph{base} for a topology on a set $X$ is an indexed sequence $\MP U =
(U_i)_{i \in I}$ of subsets of $X$ together with a function $k \colon X
\times I \times I \imp I$ such that the following properties hold.
\begin{itemize}
\item If $x \in X$, then $x \in U_i$ for some $i \in I$.
\item If $x \in U_i \cap U_j$, then $x \in U_{k(x,i,j)} \subseteq U_i \cap
    U_j$.
\end{itemize}
\end{Definition}

\begin{Definition}[$\rca$; {\cite[Definition~2.2]{Dorais:2011uv}}]
\label{def-CSCspace}
A \emph{countable second-countable space} is a triple $(X, \MP U, k)$ where
$\MP U = (U_i)_{i \in I}$ and $k \colon X \times I \times I \imp I$ form a
base for a topology on the set $X$.
\end{Definition}

Subsets of countable second-countable spaces produce subspaces in a natural
way.

\begin{Definition}[$\rca$; {\cite[Definition~2.9]{Dorais:2011uv}}]
\label{def-subspace}
Let $(X, \MP U, k)$ be a countable second-countable space with $\MP U =
(U_i)_{i \in I}$. If $X' \subseteq X$, then the corresponding \emph{subspace}
$(X', \MP{U}', k')$ is defined by $U_i' = U_i \cap X'$ for all $i \in I$ and
$k' = k \restriction (X' \times I \times I)$.
\end{Definition}

Let $(X, \MP U, k)$ be a countable second-countable space, and recall that
$\Pf(I)$ denotes the set of finite subsets of $I$. Every function $h \colon
\Nb \imp \Pf(I)$ codes a so-called \emph{effectively open} set, the idea
being that $h$ enumerates (sets of) indices of basic open sets whose union is
the open set being coded. Explicitly, $h$ is a code for the open set $G_h =
\bigcup_{n \in \Nb}\bigcup_{i \in h(n)}U_i$. Of course $\rca$ does not prove
that such unions exist in general, so we must interpret the statement ``$x$
is in the effectively open set coded by $h$'' as the formula `$(\exists
n)(\exists i \in h(n))(x \in U_i)$'. To simplify notation, we abbreviate this
formula by `$x \in G_h$' or by `$x \in \bigcup_{n \in \Nb}\bigcup_{i \in
h(n)}U_i$'. Similarly, we also interpret $h$ as coding the \emph{effectively
closed} set $F_h = X \setminus G_h = \bigcap_{n \in \Nb}\bigcap_{i \in
h(n)}(X \setminus U_i)$. The reason for coding open sets by functions $\Nb
\imp \Pf(I)$ rather than by functions $\Nb \imp I$ is that the coding by
functions $\Nb \imp \Pf(I)$ allows for a natural coding of the empty set via
the function that is constantly $\emptyset$. Otherwise we would need to
enforce that $U_i = \emptyset$ for some $i \in I$ for there to be a code for
the empty set as an effectively open set.

\begin{Definition}[$\rca$; {\cite[Definition~3.1]{Dorais:2011uv}}]\label{def-compact}
Let $(X, \MP U, k)$ be a countable second-countable space with $\MP U =
(U_i)_{i \in I}$. We say that $(X, \MP U, k)$ is \emph{compact} if for every
$h \colon \Nb \imp \Pf(I)$ such that $X = \bigcup_{n \in \Nb}\bigcup_{i \in
h(n)}U_i$ (i.e., such that $(\forall x \in X)(\exists n \in \Nb)(\exists i
\in h(n))(x \in U_i)$), there is an $N \in \Nb$ such that $X = \bigcup_{n <
N}\bigcup_{i \in h(n)}U_i$.
\end{Definition}

Dorais's \cite[Proposition~3.2]{Dorais:2011uv} expresses that this definition
of compactness does not depend on the choice of base $(\MP U, k)$ for the
topology on $X$. For matters of convenience, Definition~\ref{def-compact}
defines compactness in terms of covers by basic open sets. It is equivalent
to define compactness in terms of covers by arbitrary open sets. Let $(X, \MP
U, k)$ be a countable second-countable space. A \emph{sequence of effectively
open sets} in $(X, \MP U, k)$ is a function $g \colon \Nb \times \Nb \imp
\Pf(I)$ thought of as coding the sequence $(G_n)_{n \in \Nb}$, where each
$G_n$ is $G_{g(n,\cdot)} = \bigcup_{m \in \Nb}\bigcup_{i \in g(n,m)}U_i$.
Similarly, a \emph{sequence of effectively closed sets} in $(X, \MP U, k)$ is
a function $g \colon \Nb \times \Nb \imp \Pf(I)$ thought of as coding the
sequence $(F_n)_{n \in \Nb}$, where each $F_n$ is $F_{g(n,\cdot)} =
\bigcap_{m \in \Nb}\bigcap_{i \in g(n,m)}X \setminus U_i$. $\rca$ proves that
a countable second-countable space $(X, \MP U, k)$ is compact if and only if
for every sequence $(G_n)_{n \in \Nb}$ of effectively open sets such that $X
= \bigcup_{n \in \Nb}G_n$, there is an $N \in \Nb$ such that $X = \bigcup_{n
< N}G_n$.

\section{Noetherian spaces in second-order arithmetic}\label{sec-Noeth}

\subsection{Countable second-countable spaces}

Let $(X, \MP U, k)$ be a countable second-countable space, and let $G_h$ be
an effectively open set. One is tempted to define compactness for $G_h$ via
Definition~\ref{def-subspace} and Definition~\ref{def-compact} by saying that
the subspace corresponding to $G_h$ is compact. However, $G_h$ is a coded
object, and $\rca$ need not in general prove that it exists as a set, and so
Definition~\ref{def-subspace} need not apply.\footnote{However, if $Y$ is a
$\Sigma^0_1$ subset of $X$ and $f \colon \Nb \imp Y$ is an enumeration of
$Y$, then $(\Nb, \MP V, \ell)$, where $V_i = f^{-1}(U_i)$ and $\ell(n,i,j) =
k(f(n),i,j)$ is a countable second-countable space that is essentially a
homeomorphic copy of the subspace of $(X, \MP U, k)$ corresponding to $Y$.}
We simply extend Definition~\ref{def-compact} as follows.

\begin{Definition}[$\rca$]\label{def-opencompact}
Let $(X, \MP U, k)$ be a countable second-countable space with $\MP U =
(U_i)_{i \in I}$.  An effectively open set $G_h$ is \emph{compact} if for every
function $f \colon \Nb \imp \Pf(I)$ with $G_h \subseteq \bigcup_{n \in \Nb}
\bigcup_{i \in f(n)}U_i$, there is an $N \in \Nb$ such that $G_h \subseteq
\bigcup_{n < N}\bigcup_{i \in f(n)}U_i$.
\end{Definition}

Note that, \emph{a posteriori}, if $G_h$ is a compact effectively open set,
then $G_h = \bigcup_{n < N}\bigcup_{i \in h(n)}U_i$ for some $N \in \Nb$, so
$\rca$ does indeed prove that it exists as a set and that the corresponding
subspace is compact.

Now we show that the equivalent definitions of Noetherian space are
equivalent over $\rca$.

\begin{Proposition}[$\rca$]\label{prop-NoethEquiv}
For a countable second-countable space $(X, \MP U, k)$, the following
statements are equivalent.
\begin{itemize}
\item[(i)] Every effectively open set is compact.

\item[(ii)] For every effectively open set $G_h$, there is an $N \in \Nb$
    such that $G_h = \bigcup_{n < N}\bigcup_{i \in h(n)}U_i$.

\item[(iii)] Every subspace is compact.

\item[(iv)] For every sequence $(G_n)_{n \in \Nb}$ of effectively open sets
    such that $\forall n(G_n \subseteq G_{n+1})$, there is an $N$ such that
    $(\forall n > N)(G_n = G_N)$.

\item[(v)] For every sequence $(F_n)_{n \in \Nb}$ of effectively closed
    sets such that $\forall n(F_n \supseteq F_{n+1})$, there is an $N$ such
    that $(\forall n > N)(F_n = F_N)$.
\end{itemize}
\end{Proposition}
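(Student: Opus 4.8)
The plan is to prove the cycle of implications (i) $\Imp$ (ii) $\Imp$ (iii) $\Imp$ (i) together with (ii) $\Biimp$ (iv) and (iv) $\Biimp$ (v), being careful throughout that effectively open and effectively closed sets are coded objects, not sets, so every claimed set existence must be justified by $\Delta^0_1$ comprehension (typically because the relevant set turns out to be a finite union of basic open sets).

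For (i) $\Imp$ (ii): given an effectively open $G_h$, apply compactness of $G_h$ to the cover of itself by $h$, i.e.\ to the function $f=h$, obtaining $N$ with $G_h \subseteq \bigcup_{n<N}\bigcup_{i\in h(n)} U_i$; the reverse inclusion is trivial, so equality holds. For (ii) $\Imp$ (iii): let $X'\subseteq X$ and let $(X',\MP U',k')$ be the subspace of Definition~\ref{def-subspace}; given $h\colon\Nb\imp\Pf(I)$ with $X'=\bigcup_n\bigcup_{i\in h(n)}U_i'$, form the effectively open set $G_h$ in the ambient space $X$, use (ii) to get $N$ with $G_h=\bigcup_{n<N}\bigcup_{i\in h(n)}U_i$, and intersect with $X'$ to conclude $X'=\bigcup_{n<N}\bigcup_{i\in h(n)}U_i'$, which is the compactness of the subspace. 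For (iii) $\Imp$ (i): given an effectively open $G_h$ and a function $f$ with $G_h\subseteq\bigcup_n\bigcup_{i\in f(n)}U_i$, consider first the $\Sigma^0_1$ set $Y=\{x\in X: (\exists n)(\exists i\in h(n))(x\in U_i)\}$; as the footnote indicates, $Y$ has a homeomorphic copy that is a genuine countable second-countable space (enumerate $Y$ via $\Sigma^0_1$ listing — but here we need to be slightly careful, since $\rca$ cannot in general enumerate a $\Sigma^0_1$ set without repetition; however a possibly-partial or possibly-with-repetition enumeration suffices to transport the cover). Pull the cover $f$ back to this copy, apply (iii) to get a finite subcover, and push it forward to obtain the desired $N$.

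For (ii) $\Biimp$ (iv): assuming (ii), given an ascending sequence $(G_n)$ of effectively open sets coded by $g\colon\Nb\times\Nb\imp\Pf(I)$, form the ``limit'' effectively open set $G$ coded by the function $h$ with $h(\la n,m\ra)=g(n,m)$ (so $G=\bigcup_n G_n$ up to the usual interpretation); by (ii) there is $N_0$ with $G=\bigcup_{\la n,m\ra<N_0}\bigcup_{i\in g(n,m)}U_i$, and since the $G_n$ are ascending this finite union is contained in some $G_N$, whence $G_n=G_N$ for all $n>N$. Conversely, assuming (iv), given an effectively open $G_h$, set $G_n=\bigcup_{m<n}\bigcup_{i\in h(m)}U_i$ (each a genuine set, a finite union of basic open sets, by $\Delta^0_1$ comprehension) to get an ascending sequence of effectively open sets; (iv) yields $N$ with $G_n=G_N$ for $n>N$, and then $G_h=G_N=\bigcup_{n<N+1}\bigcup_{i\in h(n)}U_i$ (using $\Sigma^0_1$ induction to see the union has stabilized), giving (ii). For (iv) $\Biimp$ (v): this is the formal duality $F_n=X\setminus G_n$, noting that ``$F_n=F_{n+1}$'' is interpreted as ``$(\forall x)(x\in F_n\biimp x\in F_{n+1})$'' which is literally ``$(\forall x)(x\in G_n\biimp x\in G_{n+1})$'', and likewise ``$F_n\supseteq F_{n+1}$'' unwinds to ``$G_n\subseteq G_{n+1}$''; so the two statements are syntactically interchangeable and no further argument is needed.

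The main obstacle I anticipate is the implication (iii) $\Imp$ (i), specifically handling the passage to the subspace corresponding to the coded open set $G_h$ when $G_h$ need not exist as a set. The footnote's construction requires an enumeration of the $\Sigma^0_1$ set $Y = G_h$, and in $\rca$ one cannot always enumerate a $\Sigma^0_1$ set as the range of a total function without repetitions — but one can always write $Y$ as the range of a partial $\Sigma^0_1$ function, or equivalently obtain, from the code $h$, a ``universe'' $\Nb$ together with a map identifying $\Nb$ with a set of pairs $(n, \text{witness})$ so that the induced base $V_i = \{t : \text{the $t$-th enumerated element lies in } U_i\}$ and $\ell$ form a bona fide countable second-countable space; then (iii) applies to this space and the finite subcover transfers back. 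The bookkeeping to make this transfer precise — and to verify that the resulting space really is homeomorphic (in the sense that matters, namely that covers correspond) to the intended subspace — is the one genuinely fiddly point; everything else is a routine unwinding of the coding conventions together with $\Delta^0_1$ comprehension for finite unions of basic open sets and $\Sigma^0_1$ induction.
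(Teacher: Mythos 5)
Most of your proposal is sound and matches what the paper treats as routine: (i)$\Imp$(ii), (ii)$\Imp$(iii), (ii)$\Biimp$(iv), and (iv)$\Biimp$(v) all go through as you describe (the paper dismisses the equivalence of (i), (ii), (iv), (v), and the implications to (iii), as definition-chasing). The genuine gap is in the one step you yourself flag as delicate, (iii)$\Imp$(i), and the fix you sketch does not work. Statement (iii) asserts compactness only of \emph{subspaces} of $(X, \MP U, k)$ in the sense of Definition~\ref{def-subspace}, that is, of triples $(X', \MP{U}', k')$ arising from genuine subsets $X' \subseteq X$. The homeomorphic copy $(\Nb, \MP V, \ell)$ from the footnote is a perfectly good countable second-countable space, and covers do transfer back and forth along the enumeration, but it is \emph{not} a subspace of $(X, \MP U, k)$, so (iii) asserts nothing about it. Since $G_h$ is in general a properly $\Sigma^0_1$ subset of $X$ that need not exist as a set in $\rca$, there is no actual subset $X' \subseteq X$ equal to $G_h$ to which (iii) could be applied, and no amount of re-coding the enumeration (partial, with repetitions, or otherwise) changes this. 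Your ``bookkeeping'' worry about whether covers correspond is not the real issue; the real issue is that the hypothesis simply does not apply to the object you built.

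The paper closes this gap by contradiction. To prove (iii)$\Imp$(ii), suppose no $N$ satisfies $G_h = \bigcup_{n<N}\bigcup_{i \in h(n)}U_i$. Using a $\Sigma^0_1$ enumeration $g$ of $G_h$, recursively define an injection $f \colon \Nb \imp X$ by letting $f(n)$ be the first enumerated point of $G_h$ lying outside $\{f(m) : m<n\} \cup \bigcup_{m<n}\bigcup_{i \in h(m)}U_i$; such a point exists at every stage precisely because no finite subcover works. Now $\rca$ proves that the range of an injection contains an infinite set $X'$, and this $X'$ \emph{is} a genuine subset of $X$, contained in $G_h$ and hence covered by $h$. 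Applying (iii) to the subspace $X'$ yields an $N$ with $X' \subseteq \bigcup_{m<N}\bigcup_{i \in h(m)}U_i$, and any $f(n) \in X'$ with $n > N$ contradicts the choice of $f(n)$. You would need to replace your homeomorphic-copy argument with something of this kind.
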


\begin{proof}
The proof that (i), (ii), (iv), and (v) are equivalent is a simple exercise
in chasing the definitions. Likewise, it is easy to see that each of (i),
(ii), (iv), and (v) implies (iii). That (iii) implies the others requires
proof. We show that (iii)$\Imp$(ii). Let $(X, \MP U, k)$ be a countable
second-countable space, and let $G_h$ be an effectively open set. We would
like to apply (iii) to the subspace $G_h$, but this cannot be done in $\rca$
because $G_h$ need not exist as a set.

Assume for a contradiction that no $N$ satisfies $G_h = \bigcup_{n <
N}\bigcup_{i \in h(n)}U_i$, and fix an enumeration $g \colon \Nb \imp X$ of
the elements of $G_h$ (which is possible because $G_h$ has a $\Sigma^0_1$
definition). We recursively define an injection $f \colon \Nb \imp X$.
Assuming we defined $f(m)$ for $m<n$, let
\begin{align*}
F_n = \{f(m) : m < n\} \cup \bigcup_{m < n}\bigcup_{i \in h(m)}U_i,
\end{align*}
and define $f(n) = g(k)$ where $k$ is least such that $g(k) \notin F_n$. Such
a $k$ exists because otherwise $G_h \subseteq F_n$ is the union of finitely
many of the sets $\bigcup_{i \in h(m)}U_i$.

Let $X' \subseteq X$ be an infinite set such that $(\forall x \in X')(\exists
n \in \Nb)(f(n) = x)$. (It is well-known and easy to show that $\rca$ proves
that if $f \colon \Nb \imp X$ is an injection, then the range of $f$ is
infinite and there is an infinite set $X'$ of elements in the range of $f$.
This is a formalization of the fact that every infinite r.e.\ set contains an
infinite recursive subset.) By (iii), the subspace $(X', \MP{U}', k')$ (using
the notation of Definition~\ref{def-subspace}) is compact, so there is an $N
\in \Nb$ such that $X' = \bigcup_{m < N}\bigcup_{i \in h(m)}(U_i \cap X')$.
Now pick $n>N$ such that $f(n) \in X'$. We have the contradiction that both
$f(n) \in \bigcup_{m < N}\bigcup_{i \in h(m)}U_i$ by the choice of $N$ and
$f(n) \notin \bigcup_{m < N}\bigcup_{i \in h(m)}U_i$ by the definition of
$f$.
\end{proof}

\begin{Definition}[$\rca$]
A countable second-countable space is \emph{Noetherian} if it satisfies any
of the equivalent conditions from Proposition~\ref{prop-NoethEquiv}.
\end{Definition}

We can make any quasi-order a countable second-countable space by giving it
either the Alexandroff topology or the upper topology.

\begin{Definition}[$\rca$]
Let $Q$ be a quasi-order.
\begin{itemize}
\item A base for the Alexandroff topology on $Q$ is given by $\MP U =
    (U_q)_{q \in Q}$, where $U_q = q\ua$ for each $q \in Q$, and $k(q, p,
    r) = q$.  Let $\alex(Q)$ denote the countable second-countable space $(Q,
    \MP U, k)$.

\item A base for the upper topology on $Q$ is given by $\MP V = (V_{\bs
    i})_{\bs i \in \Pf(Q)}$, where $V_{\bs i} = Q \setminus (\bs{i}\da)$
    for each $\bs i \in \Pf(Q)$, and $\ell(q, \bs i, \bs j) = \bs i \cup
    \bs j$.  Let $\upper(Q)$ denote the countable second-countable space $(Q,
    \MP V, \ell)$.
\end{itemize}
\end{Definition}

That a quasi-order's Alexandroff topology is finer than its upper topology
can be made precise via the following definition.

\begin{Definition}[$\rca$]\label{def-EffectivelyFiner}
Let $X$ be a set, and let $(\MP U = (U_i)_{i \in I}, k)$ and $(\MP V =
(V_j)_{j \in J}, \ell)$ be two bases for topologies on $X$. We say that $(X, \MP U,
k)$ is \emph{effectively finer} than $(X, \MP V, \ell)$ and that $(X, \MP V, \ell)$
is \emph{effectively coarser} than $(X, \MP U, k)$ if there is a function $f
\colon J \times \Nb \imp \Pf(I)$ such that
\begin{align*}
(\forall j \in J)(\forall x \in
X)(x \in V_j \biimp (\exists n \in \Nb)(\exists i \in f(j,n))(x \in U_i)).
\end{align*}
\end{Definition}

Essentially, $(X, \MP U, k)$ is effectively finer than $(X, \MP V, \ell)$ if
there is a sequence of sets $(G_j)_{j \in J}$ indexed by $J$ and effectively
open in $(X, \MP U, k)$ such that $(\forall j \in J)(G_j = V_j)$. It follows
that every effectively open set in $(X, \MP V, \ell)$ is effectively open in
$(X, \MP U, k)$, which leads to the following proposition.

\begin{Proposition}[$\rca$]\label{prop-finer}
Let $(X, \MP U, k)$ and $(X, \MP V, \ell)$ be countable second-countable
spaces with $(X, \MP U, k)$ effectively finer than $(X, \MP V, \ell)$.
\begin{itemize}
\item If $(X, \MP U, k)$ is compact, then $(X, \MP V, \ell)$ is compact.
\item If $(X, \MP U, k)$ is Noetherian, then $(X, \MP V, \ell)$ is Noetherian.
\end{itemize}
\end{Proposition}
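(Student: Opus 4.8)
The plan is to transfer both statements from $(X, \MP U, k)$ to $(X, \MP V, \ell)$ by re-coding, using only that every basic open set, effectively open set, open cover, or sequence of effectively open sets of $(X, \MP V, \ell)$ can be rewritten as an object of the same kind in $(X, \MP U, k)$ with exactly the same underlying set of points. Concretely, let $f \colon J \times \Nb \imp \Pf(I)$ witness that $(X, \MP U, k)$ is effectively finer than $(X, \MP V, \ell)$, so that for each $j \in J$ the basic open set $V_j$ of $(X, \MP V, \ell)$ has, as a set of points, the same $\Sigma^0_1$ definition as the effectively open set of $(X, \MP U, k)$ coded by $f(j, \cdot)$. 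Given any code $h \colon \Nb \imp \Pf(J)$ for an effectively open set of $(X, \MP V, \ell)$, define $h^* \colon \Nb \imp \Pf(I)$ by $h^*(p) = \bigcup_{j \in h(n)} f(j, m)$ when $p = \la n, m\ra$ and $h^*(p) = \emptyset$ otherwise; this is a finite union of finite sets, so $h^*$ is total and recursive in $h$ and $f$, and $\rca$ proves $(\forall x \in X)(x \in G_h \biimp x \in G_{h^*})$. The same construction, carried out uniformly in an extra parameter, turns a code $g$ for a sequence $(G_n)_{n \in \Nb}$ of effectively open sets of $(X, \MP V, \ell)$ into a code $g^*$ for a sequence $(G_n^*)_{n \in \Nb}$ of effectively open sets of $(X, \MP U, k)$ such that $\rca$ proves $(\forall n)(\forall x \in X)(x \in G_n \biimp x \in G_n^*)$.

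For the compactness clause, assume $(X, \MP U, k)$ is compact and let $h \colon \Nb \imp \Pf(J)$ satisfy $(\forall x \in X)(\exists n)(\exists j \in h(n))(x \in V_j)$. Then $h^*$ codes a cover of $X$ by basic open sets of $(X, \MP U, k)$, so compactness of $(X, \MP U, k)$ yields $K$ with $X = \bigcup_{p < K} \bigcup_{i \in h^*(p)} U_i$. Choosing $N$ so large that $\la n, m\ra < K$ implies $n < N$ (only finitely many pairs lie below $K$) and unwinding the definition of $h^*$, we get $(\forall x \in X)(\exists n < N)(\exists j \in h(n))(x \in V_j)$, so $(X, \MP V, \ell)$ is compact.

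For the Noetherian clause, assume $(X, \MP U, k)$ is Noetherian and let $g$ code a sequence $(G_n)_{n \in \Nb}$ of effectively open sets of $(X, \MP V, \ell)$ with $(\forall n)(G_n \subseteq G_{n+1})$. Since $G_n$ and $G_n^*$ have the same point-set for every $n$, the sequence $(G_n^*)_{n \in \Nb}$ of effectively open sets of $(X, \MP U, k)$ is also ascending, so by Proposition~\ref{prop-NoethEquiv}(iv) applied to $(X, \MP U, k)$ there is an $N$ with $(\forall n > N)(G_n^* = G_N^*)$. Transporting this back through the point-set equivalences yields $(\forall n > N)(G_n = G_N)$, and then Proposition~\ref{prop-NoethEquiv} gives that $(X, \MP V, \ell)$ is Noetherian.

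The one point where care is genuinely needed --- and the step I would single out as the crux --- is that in $\rca$ the objects $G_h$, $G_n$, the covers, and the subcovers need not exist as sets, so every assertion such as ``$G_h = G_{h^*}$'', ``$(G_n^*)_{n \in \Nb}$ is ascending'', or ``$X$ is covered'' must be read as a provable equivalence of the relevant $\Sigma^0_1$ formulas rather than as set identity. The re-coding $h \mapsto h^*$ is designed precisely so that these equivalences follow at once from the defining property of $f$, and the remaining finite bookkeeping with the pairing function is routine in $\rca$. One can alternatively phrase the compactness transfer in terms of sequences of effectively open sets covering $X$, using the reformulation of compactness recorded at the end of Section~\ref{sec-background}, which absorbs the pairing-bound step into the construction of $g^*$.
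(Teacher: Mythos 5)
Your proof is correct and follows exactly the route the paper intends: the paper omits the proof of Proposition~\ref{prop-finer}, remarking only that every effectively open set in $(X, \MP V, \ell)$ is effectively open in $(X, \MP U, k)$, and your re-coding $h \mapsto h^*$ together with the pairing-bound bookkeeping is precisely the elaboration of that remark. The transfer of stabilization via Proposition~\ref{prop-NoethEquiv}(iv) and the reading of all set equalities as provable equivalences of $\Sigma^0_1$ formulas are both handled correctly.
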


\begin{Proposition}[$\rca$]\label{prop-AlexFiner}
Let $Q$ be a quasi-order. Then $\alex(Q)$ is effectively finer than
$\upper(Q)$.
\end{Proposition}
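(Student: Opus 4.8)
The plan is to unwind the two definitions involved—"effectively finer" (Definition~\ref{def-EffectivelyFiner}) and the explicit bases for $\alex(Q)$ and $\upper(Q)$—and exhibit the required translating function directly. Recall that the Alexandroff base is $\MP U = (U_q)_{q \in Q}$ with $U_q = q\ua$, indexed by $I = Q$, while the upper base is $\MP V = (V_{\bs i})_{\bs i \in \Pf(Q)}$ with $V_{\bs i} = Q \setminus (\bs i\da)$, indexed by $J = \Pf(Q)$. So I need, provably in $\rca$, a function $f \colon \Pf(Q) \times \Nb \imp \Pf(Q)$ such that for every $\bs i \in \Pf(Q)$ and every $x \in Q$,
\begin{align*}
x \in Q \setminus (\bs i\da) \biimp (\exists n \in \Nb)(\exists q \in f(\bs i, n))(x \in q\ua).
\end{align*}

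The key observation is that $Q \setminus (\bs i\da)$ is precisely the union of the basic Alexandroff open sets $q\ua$ as $q$ ranges over $Q \setminus (\bs i\da)$ itself: if $x \notin \bs i\da$ then $x \in x\ua$ and $x \in Q \setminus (\bs i\da)$, so $x$ is covered; conversely, if $x \in q\ua$ for some $q \notin \bs i\da$, then $q \leq_Q x$, and since $\bs i\da$ is downward closed, $x \notin \bs i\da$ either. Hence the natural choice is to let $f(\bs i, n)$ enumerate, one element at a time, the set $Q \setminus (\bs i\da)$. Concretely, using the identification of $Q$ with (a subset of) $\Nb$, define $f(\bs i, n) = \{n\}$ if $n \in Q$ and $n \notin \bs i\da$, and $f(\bs i, n) = \emptyset$ otherwise. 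This $f$ is total and, since $\bs i$ is finite, the predicate "$n \notin \bs i\da$" is just "$(\forall p \in \bs i)(n \nleq_Q p)$", which is quantifier-free in the parameters; thus $f$ is defined by $\Delta^0_1$ (indeed primitive recursive) comprehension and exists in $\rca$.

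With this $f$ in hand, the biconditional above is immediate from the key observation: the right-hand side says there is some $q \in Q \setminus (\bs i\da)$ with $q \leq_Q x$, which by downward-closure of $\bs i\da$ forces $x \notin \bs i\da$, i.e.\ $x \in V_{\bs i}$; and conversely $x \in V_{\bs i}$ gives $q = x$ as a witness. Therefore $\alex(Q)$ is effectively finer than $\upper(Q)$. The proof is entirely routine—there is no real obstacle—the only point requiring the slightest care is confirming that the defining predicate for $f$ is genuinely $\Delta^0_1$ so that $f$ exists in $\rca$, and this is clear because $\bs i\da$ for finite $\bs i$ is a finite set whose membership relation is decidable from $\leq_Q$ and $\bs i$.
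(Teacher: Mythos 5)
Your proof is correct and is essentially identical to the paper's: both define $f(\bs i, n)$ to be $\{n\}$ when $n \in Q \setminus (\bs i\da)$ and $\emptyset$ otherwise, and verify the required biconditional using the downward closure of $\bs i\da$. Nothing to add.
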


\begin{proof}
Let $Q$ be a quasi-order, let $((U_q)_{q \in Q}, k)$ be the base for the
Alexandroff topology on $Q$, and let $((V_{\bs i})_{\bs i \in \Pf(Q)}, \ell)$
be the base for the upper topology on $Q$. Define $f \colon \Pf(Q) \times \Nb
\imp \Pf(Q)$ by
\begin{align*}
f(\bs i, n) =
\begin{cases}
\{n\} & \text{if $n \in Q \setminus (\bs{i}\da)$}\\
\emptyset & \text{otherwise}.
\end{cases}
\end{align*}
Then for each $\bs i \in \Pf(Q)$, $\bigcup_{n \in \Nb}\bigcup_{q \in f(\bs i,n)}U_q =
\bigcup_{q \in Q \setminus (\bs{i}\da)} U_q = Q \setminus
(\bs{i}\da) = V_{\bs i}$. So $f$ witnesses that $\alex(Q)$ is effectively
finer that $\upper(Q)$.
\end{proof}

The basic relationships among a quasi-order, its Alexandroff topology, and
its upper topology are provable in $\rca$.

\begin{Proposition}[$\rca$]\label{prop-WQOvsTopInRCA}
Let $Q$ be a quasi-order.
\begin{itemize}
\item[(i)] If $\alex(Q)$ Noetherian, then $\upper(Q)$ Noetherian.

\item[(ii)] $Q$ is a wqo if and only if $\alex(Q)$ is Noetherian.
\end{itemize}
\end{Proposition}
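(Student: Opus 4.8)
The plan is to prove the two items by chasing the definitions of Noetherian via the characterizations in Proposition~\ref{prop-NoethEquiv}, using the fact (Proposition~\ref{prop-AlexFiner} together with Proposition~\ref{prop-finer}) that $\alex(Q)$ is effectively finer than $\upper(Q)$. Item~(i) is then immediate: if $\alex(Q)$ is Noetherian, then since $\alex(Q)$ is effectively finer than $\upper(Q)$, Proposition~\ref{prop-finer} gives that $\upper(Q)$ is Noetherian. So the real content is item~(ii), the equivalence of ``$Q$ is a wqo'' with ``$\alex(Q)$ is Noetherian.''

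For the forward direction of~(ii), suppose $Q$ is a wqo. I will use characterization~(iv) of Proposition~\ref{prop-NoethEquiv}: given a sequence $(G_n)_{n \in \Nb}$ of effectively open sets in $\alex(Q)$ with $G_n \subseteq G_{n+1}$ for all $n$, I must find $N$ such that $G_n = G_N$ for all $n > N$. Recall that in $\alex(Q)$ the basic open sets are the sets $q\ua$, so each $G_n$ has the form $E_n\ua$ for an ``effective'' set $E_n$ enumerated by the code; crucially, $x \in G_n$ is equivalent to the $\Sigma^0_1$ statement that some enumerated index $q$ satisfies $q \leq_Q x$. The strategy is: if no such $N$ existed, then for each $n$ we could pick some $x_n \in G_{n+1} \setminus G_n$ (found effectively by searching the enumerations); by the wqo property applied to the sequence $(x_n)_{n \in \Nb}$, there are $m < n$ with $x_m \leq_Q x_n$, and since $x_m$ is witnessed into $G_{m+1}$ by some $q$ with $q \leq_Q x_m \leq_Q x_n$, we get $x_n \in G_{m+1} \subseteq G_n$, contradicting $x_n \notin G_n$. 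The one subtlety is that ``no $N$ works'' is a priori a $\Pi^0_2$-ish statement and the sequence $(x_n)$ must be built by unbounded search; but this only requires $\Sigma^0_1$-induction/bounding available in $\rca$ to verify that the search terminates at each stage (the failure of every candidate $N < n+1$ supplies a witness in $G_{n+1}\setminus G_n$), so the construction goes through in $\rca$.

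For the reverse direction of~(ii), suppose $\alex(Q)$ is Noetherian, and suppose toward a contradiction that $Q$ is not a wqo, so there is a bad sequence $(q_n)_{n \in \Nb}$ with $q_m \nleq_Q q_n$ whenever $m < n$. Define $G_n$ to be the effectively open set coded by the enumeration of $\{q_0, q_1, \dots, q_{n-1}\}$, i.e.\ (the code for) $\{q_m : m < n\}\ua$; then $(G_n)_{n \in \Nb}$ is an increasing sequence of effectively open sets. By characterization~(iv) there is an $N$ with $G_n = G_N$ for all $n > N$. In particular $q_N \in G_{N+1} = G_N = \{q_m : m < N\}\ua$, so there is some $m < N$ with $q_m \leq_Q q_N$, contradicting badness. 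This direction is entirely routine in $\rca$. The main obstacle, and the only place needing real care, is the forward direction: formalizing the ``pick $x_n \in G_{n+1} \setminus G_n$'' step so that the resulting function $n \mapsto x_n$ provably exists in $\rca$ — this is handled by observing that under the assumption of non-stabilization, at stage $n$ a suitable witness is guaranteed to appear in some enumerated stage of $G_{n+1}$ together with a failure of $G_n$ to contain it, and a $\Sigma^0_1$ search locates the least such, which $\rca$ can carry out uniformly.
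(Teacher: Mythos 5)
Item~(i) and the reverse direction of~(ii) are fine and coincide with the paper's argument. The gap is in the forward direction of~(ii), at the step ``pick $x_n \in G_{n+1}\setminus G_n$.'' There are two problems. First, non-stabilization of an ascending sequence does not give $G_{n+1}\setminus G_n\neq\emptyset$ for every $n$ (the sequence may repeat a term several times before growing), so at best you can search for some index $m>n$ with $G_m\setminus G_n\neq\emptyset$; that part is a cosmetic fix. The second problem is genuine: membership $x\in G_m$ is a $\Sigma^0_1$ condition (the code enumerates an infinite union of basic opens), so $x\notin G_n$ is $\Pi^0_1$, and the condition defining $x_n$ is therefore not $\Sigma^0_1$. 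Your parenthetical claim that ``a $\Sigma^0_1$ search locates the least such'' is false, and $\rca$ does not allow a recursion whose search condition has a $\Pi^0_1$ conjunct; consequently the sequence $(x_n)_{n\in\Nb}$ need not exist as a second-order object, and the wqo hypothesis cannot be applied to it.

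The paper's proof (phrased as the contrapositive) is arranged precisely to avoid this: at stage $k$ it searches for $n_k$ and $q_k$ with $q_k\in G_{n_k}$ (a $\Sigma^0_1$ condition) and $(\forall i<k)(q_i\nleq_Q q_k)$ (a bounded, decidable condition), instead of demanding $q_k\notin\bigcup_{i<k}G_{n_i}$. Such a $q_k$ exists because some $G_m$ properly contains $\bigcup_{i<k}G_{n_i}$, Alexandroff opens are upward closed, and each $q_i$ lies in $G_{n_i}$; and the condition actually recorded is exactly badness of $(q_i)_{i\le k}$, so no non-membership ever needs to be verified. Substituting this selection step repairs your argument, but as written the forward direction does not go through in $\rca$.
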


\begin{proof}
Item~(i) follows from Proposition~\ref{prop-AlexFiner} and
Proposition~\ref{prop-finer}.

For item~(ii), first suppose that $\alex(Q)$ is not Noetherian, and let
$(G_n)_{n \in \Nb}$ be an ascending sequence of effectively open sets that
does not stabilize, meaning that $\forall n(G_n\subseteq G_{n+1})$ and $(\forall
n)(\exists m>n)(G_n\subsetneqq G_m)$. We recursively define a bad sequence
$(q_i)_{i \in \Nb}$ of elements of $Q$ together with a sequence of indices
$(n_i)_{i \in \Nb}$ such that $\forall i(q_i \in G_{n_i})$. Suppose that
$(q_i)_{i<k}$ and $(n_i)_{i<k}$ have been defined so that $(q_i)_{i<k}$ is a
finite bad sequence and that $(\forall i < k)(q_i \in G_{n_i})$. Search for a
$q_k$ and $n_k$ such that $q_k \in G_{n_k}$ and $(\forall i < k)(q_i \nleq_Q
q_k)$. Such a pair must exist because there is an $m$ such that $\bigcup_{i <
k}G_{n_i} \subsetneqq G_m$, and in such a $G_m$ there must be a $q$ such that
$(\forall i < k)(q_i \nleq_Q q)$.

Conversely, suppose that $Q$ is not a wqo, and let $(q_i)_{i \in \Nb}$ be a
bad sequence of elements of $Q$. Then the sequence $(G_n)_{n \in \Nb}$, where
$G_n = \{q_i : i < n\}\ua$ for each $n \in \Nb$, is an ascending sequence of
effectively open sets that does not stabilize (in fact, $G_n \subsetneqq
G_{n+1}$ for each $n$).
\end{proof}

Our analysis immediately yields the first two forward directions of Theorem
\ref{thm-BigEquiv}.

\begin{Theorem}[$\aca$]\label{thm-FlatInACA}
If $Q$ is a wqo, then $\alex(\Pf^\flat(Q))$ and $\upper(\Pf^\flat(Q))$ are
Noetherian.
\end{Theorem}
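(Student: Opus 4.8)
The plan is to obtain Theorem~\ref{thm-FlatInACA} as a direct consequence of two facts already available: the $\aca$-provable statement that $\Pf^\flat(Q)$ is a wqo whenever $Q$ is a wqo (Theorem~\ref{thm-FlatWQOinACA}), and the $\rca$-provable correspondence between well-quasi-orderedness of a quasi-order and Noetherianness of its Alexandroff and upper topologies (Proposition~\ref{prop-WQOvsTopInRCA}). No new combinatorics is required here; the only substantive ingredient, the formalization in $\aca$ of the classical Erd\H{o}s--Rado fact that the Hoare quasi-order on the finite subsets of a wqo is again a wqo, is already packaged inside Theorem~\ref{thm-FlatWQOinACA}, so the present statement is essentially a corollary that assembles the pieces.

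Concretely, I would argue as follows, working in $\aca$. Let $Q$ be a wqo. First note that $\rca$ already proves that $\Pf(Q)$ exists as a set and that $\Pf^\flat(Q) = (\Pf(Q), \leq_Q^\flat)$ is a quasi-order, so that $\alex(\Pf^\flat(Q))$ and $\upper(\Pf^\flat(Q))$ are bona fide countable second-countable spaces in the sense of the preceding subsection. Now invoke Theorem~\ref{thm-FlatWQOinACA} to conclude that $\Pf^\flat(Q)$ is a wqo. Instantiating Proposition~\ref{prop-WQOvsTopInRCA}(ii) at the quasi-order $\Pf^\flat(Q)$ then gives that $\alex(\Pf^\flat(Q))$ is Noetherian. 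Finally, instantiating Proposition~\ref{prop-WQOvsTopInRCA}(i) at $\Pf^\flat(Q)$ (equivalently, combining Proposition~\ref{prop-AlexFiner} with Proposition~\ref{prop-finer} applied to $\Pf^\flat(Q)$) gives that $\upper(\Pf^\flat(Q))$ is Noetherian as well.

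Since every step is a citation of an earlier result, I do not anticipate any genuine obstacle: the proof is bookkeeping. The one point worth checking is that the quantifier structures line up — Theorem~\ref{thm-FlatWQOinACA} produces ``$\Pf^\flat(Q)$ is a wqo'' for the given wqo $Q$, while Proposition~\ref{prop-WQOvsTopInRCA} is stated for an arbitrary quasi-order, so specializing it to $\Pf^\flat(Q)$ is legitimate. It is also worth remarking that $\aca$ enters only through Theorem~\ref{thm-FlatWQOinACA}; the passage from ``$\Pf^\flat(Q)$ is a wqo'' to the Noetherianness of both $\alex(\Pf^\flat(Q))$ and $\upper(\Pf^\flat(Q))$ is carried out entirely in $\rca$.
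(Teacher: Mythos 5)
Your proposal is correct and coincides with the paper's own proof: both deduce from Theorem~\ref{thm-FlatWQOinACA} that $\Pf^\flat(Q)$ is a wqo and then apply Proposition~\ref{prop-WQOvsTopInRCA} to that quasi-order to obtain Noetherianness of $\alex(\Pf^\flat(Q))$ and $\upper(\Pf^\flat(Q))$. The observation that $\aca$ is used only for Theorem~\ref{thm-FlatWQOinACA} is also accurate.
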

\begin{proof}
Let $Q$ be a wqo. Then $\Pf^\flat(Q)$ is a wqo by
Theorem~\ref{thm-FlatWQOinACA}, so $\alex(\Pf^\flat(Q))$ and
$\upper(\Pf^\flat(Q))$ are Noetherian by
Proposition~\ref{prop-WQOvsTopInRCA}.
\end{proof}

We defer the proof in $\aca$ of the statement ``if $Q$ is a wqo, then
$\upper(\Pf^\sharp(Q))$ is Noetherian'' to Corollary
\ref{thm-SharpInACA}, because a direct proof would essentially repeat the
proof of Theorem~\ref{thm-UnctblSharpInACA}, which is the analogous theorem
for the more general $\Ps^\sharp(Q)$ case.

\subsection{Uncountable second-countable spaces}

If $Q$ is an infinite quasi-order, then $\Ps(Q)$ is uncountable and thus
$\upper(\Ps^\flat(Q))$ and $\upper(\Ps^\sharp(Q))$ cannot be coded as
countable second-countable spaces. However, in second-order arithmetic we can
still express, for example, that the sets $E_0, \dots, E_{n-1}$ code the
basic closed set $\{E_0, \dots, E_{n-1}\}\da^\flat$ by defining $A \in \{E_0,
\dots, E_{n-1}\}\da^\flat$ to mean that $(\exists i < n)(A \leq_Q^\flat
E_i)$. (Notice here that we use the notation `$\da^\flat$' to emphasize that
the downward closure is with respect to $\leq_Q^\flat$. We similarly use
`$\da^\sharp$' to denote the downward closure with respect to
$\leq_Q^\sharp$.) Although not immediately obvious from the definitions, it
is the case that the spaces $\upper(\Ps^\flat(Q))$ and
$\upper(\Ps^\sharp(Q))$ are second-countable (as
Proposition~\ref{prop-FlatSecondCtblACA} and
Proposition~\ref{prop-SharpSecondCtblRCA} below imply).
This situation and Definition~\ref{def-CSCspace} inspire the following
meta-definition, each instance of which is made in $\rca$.

\begin{Definition}[instance-wise in $\rca$]\label{def-GSCspace}
A (general) \emph{second-countable space} is coded by a set $I \subseteq \Nb$
and formulas $\varphi(X)$, $\Psi_=(X,Y)$, and $\Psi_\in(X,n)$ (possibly with
undisplayed parameters) such that the following properties hold.
\begin{itemize}
\item If $\varphi(X)$, then $\Psi_\in(X,i)$ for some $i \in I$.

\item If $\varphi(X)$, $\Psi_\in(X,i)$, and $\Psi_\in(X,j)$ for some $i,j
    \in I$, then there is a $k \in I$ such that $\Psi_\in(X,k)$ and
    $\forall Y[\Psi_\in(Y,k) \imp (\Psi_\in(Y,i) \andd \Psi_\in(Y,j))]$.

\item If $\varphi(X)$, $\varphi(Y)$, $\Psi_\in(X,i)$ for an $i \in I$, and
    $\Psi_=(X,Y)$, then $\Psi_\in(Y,i)$.
\end{itemize}
\end{Definition}

The intuition behind Definition~\ref{def-GSCspace} is that $I$ is a set of
codes for open sets, $\varphi(X)$ says ``$X$ codes a point'', $\Psi_=(X,Y)$
says ``$X$ and $Y$ code the same point'', and $\Psi_\in(X,i)$ says ``the
point coded by $X$ is in the open set coded by $i$''. Effectively open sets
and effectively closed sets are coded as they are in the countable case. A
function $h \colon \Nb \imp \Pf(I)$ codes the effectively open set $G_h =
\bigcup_{n \in \Nb}\bigcup_{i \in h(n)}\{X : \varphi(X) \andd
\Psi_\in(X,i)\}$ and the effectively closed set $F_h = \bigcap_{n \in
\Nb}\bigcap_{i \in h(n)}\{X : \varphi(X) \andd \neg \Psi_\in(X,i)\}$. Again,
`$X \in G_h$' is an abbreviation for the formula `$\varphi(X) \andd (\exists
n \in \Nb)(\exists i \in h(n))\Psi_\in(X,i)$', and similarly for `$X \in
F_h$'. Sequences of effectively open sets and sequences of effectively closed
sets are coded by functions $g \colon \Nb \times \Nb \imp \Pf(I)$, with
$g(n,\cdot)$ coding the $n$\textsuperscript{th} set in the sequence.

As an example, the typical coding of complete separable metric spaces in
$\rca$ (see \cite[Section~II.5]{Simpson:2009vv}) fits nicely into this
framework. Here we first fix a set $A$ and a metric $d \colon A \times A \imp
\Rb$, and we let $I = A \times \Qb^+$. Then we let $\varphi(X)$ be a formula
expressing that $X$ is a rapidly converging Cauchy sequence of points in $A$,
$\Psi_=(X,Y)$ be a formula expressing that the distance between the point
coded by $X$ and the point coded by $Y$ is $0$, and $\Psi_\in(X, \la a, q
\ra)$ be a formula expressing that the distance between $X$ and $a$ is less
than $q$.

Our framework easily accommodates also the countably based MF spaces 
studied in \cite{Mummert06} (by \cite{MumSte} these are exactly the
second-countable $T_1$ spaces with the strong Choquet property), although in
this case the existence of some $X$ satisfying $\varphi(X)$ in general requires
$\aca$, as shown in \cite{LemMum}.

We also define compact spaces and Noetherian spaces as in the countable case.

\begin{Definition}[$\rca$]
A second-countable space coded by $I$, $\varphi$, $\Psi_=$, and $\Psi_\in$ is
$\emph{compact}$ if for every $h \colon \Nb \imp \Pf(I)$ such that $\forall
X(\varphi(X) \imp (\exists n \in \Nb)(\exists i \in h(n))\Psi_\in(X,i))$,
there is an $N \in \Nb$ such that $\forall X(\varphi(X) \imp (\exists n <
N)(\exists i \in h(n))\Psi_\in(X,i))$.

Similarly, an effectively open set $G_h$ in a second-countable space is
\emph{compact} if for every $f \colon \Nb \imp \Pf(I)$ such that $\forall X(X
\in G_h \imp (\exists n \in \Nb)(\exists i \in f(n))\Psi_\in(X,i))$, there is
an $N \in \Nb$ such that $\forall X(X \in G_h \imp (\exists n < N)(\exists i
\in f(n))\Psi_\in(X,i))$.
\end{Definition}

The equivalent characterizations of a Noetherian space given in
Proposition~3.1 (i), (ii), (iv) and (v) are also equivalent in the
uncountable case.  We omit the ``every subspace is compact'' characterization
because quantifying over subspaces of an uncountable space is difficult. One
could quantify over a parameterized collection of subspaces of an uncountable
space via a formula $\theta(X,Y)$ such that $\forall X \forall Y(\theta(X,Y)
\imp \varphi(X))$, in which case each $Y$ corresponds to a subspace, but this
is not useful for our purposes.

\begin{Proposition}[$\rca$]\label{prop-UnctblNoethEquiv}
For a second-countable space, the following statements are equivalent.
\begin{itemize}
\item[(i)] Every effectively open set is compact.

\item[(ii)] For every effectively open set $G_h$, there is an $N \in \Nb$
    such  that $\forall X (X \in G_h \biimp (\exists n < N)(\exists i \in
    h(n))\Psi_\in(X,i))$

\item[(iii)] For every sequence $(G_n)_{n \in \Nb}$ of effectively open
    sets  such that $\forall n(G_n \subseteq G_{n+1})$ there is an $N$ such
    that $(\forall n > N)(G_n = G_N)$.

\item[(iv)] For every sequence $(F_n)_{n \in \Nb}$ of effectively closed
    sets  such that $\forall n(F_n \supseteq F_{n+1})$ there is an $N$ such
    that $(\forall n > N)(F_n = F_N)$.
\end{itemize}
\end{Proposition}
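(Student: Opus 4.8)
The plan is to prove Proposition~\ref{prop-UnctblNoethEquiv} exactly as one proves Proposition~\ref{prop-NoethEquiv}, except that the ``every subspace is compact'' clause has been dropped, so no genuinely new idea is needed; the point is just to check that the implications among (i), (ii), (iii), (iv) go through when ``open set'' is replaced by ``effectively open set coded by $h \colon \Nb \imp \Pf(I)$'' and set-theoretic operations are replaced by their defining formulas. Throughout, all the manipulations are ``chasing definitions'': an effectively open set $G_h$ is really the formula $\varphi(X) \andd (\exists n)(\exists i \in h(n))\Psi_\in(X,i)$, an inclusion $G \subseteq G'$ of effectively open sets means $\forall X(X \in G \imp X \in G')$, and an equality means the biconditional.

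First I would observe $\text{(i)} \Biimp \text{(ii)}$. For $\text{(ii)} \Imp \text{(i)}$: given a compact-candidate $G_h$ and a cover $f$, by (ii) applied to $G_h$ we get $N_0$ with $G_h = \bigcup_{n < N_0}\bigcup_{i \in h(n)}\{X : \varphi(X) \andd \Psi_\in(X,i)\}$, so $G_h$ literally exists as a set (it is a finite union of the sets $\{X:\varphi(X) \andd \Psi_\in(X,i)\}$ for the finitely many $i \in \bigcup_{n<N_0}h(n)$); now $f$ also covers $G_h$, and applying (ii) to the effectively open set $G_f \cap G_h$ — coded again by a function $\Nb \imp \Pf(I)$, using the base-intersection function from Definition~\ref{def-GSCspace} to combine an index from $f$ with one of the finitely many indices of $G_h$ — yields the required finite subcover. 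For $\text{(i)} \Imp \text{(ii)}$: apply (i) to $G_h$ with the cover $f = h$ itself; compactness gives $N$ with $G_h \subseteq \bigcup_{n<N}\bigcup_{i\in h(n)}\{X:\varphi(X)\andd\Psi_\in(X,i)\}$, and the reverse inclusion is trivial since each term of this finite union is a subset of $G_h$.

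Next I would handle $\text{(ii)} \Biimp \text{(iii)} \Biimp \text{(iv)}$. Note $\text{(iii)} \Biimp \text{(iv)}$ is immediate by complementation: $F_h$ is coded by the same $h$ that codes $G_h$, with $X \in F_h \biimp (\varphi(X) \andd X \notin G_h)$, so a descending sequence $(F_n)$ of effectively closed sets corresponds to an ascending sequence $(G_n)$ of effectively open sets with the same codes, and $F_n = F_m \biimp G_n = G_m$ in the sense of the coded equalities. For $\text{(ii)} \Imp \text{(iii)}$: given an ascending sequence $(G_n)$ coded by $g \colon \Nb \times \Nb \imp \Pf(I)$, form the single effectively open set $G$ coded by $h(m) = g(n,k)$ where $m = \la n,k\ra$ (so $G$ is, informally, $\bigcup_n G_n$); by (ii) there is $N$ with $G = \bigcup_{m<N}\bigcup_{i \in h(m)}\{X:\varphi(X)\andd\Psi_\in(X,i)\}$, and for $m < N$ the pair $n$ in $\la n,k\ra = m$ is bounded, so $G = \bigcup_{n < N'}G_n = G_{N'}$ for a suitable $N'$ (using that the sequence is ascending), whence $G_n = G_{N'}$ for all $n \geq N'$. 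For $\text{(iii)} \Imp \text{(ii)}$: given $G_h$, let $G_n = \bigcup_{m<n}\bigcup_{i \in h(m)}\{X:\varphi(X)\andd\Psi_\in(X,i)\}$, an ascending sequence of effectively open sets (each is a finite union, coded by a function $\Nb \imp \Pf(I)$) with $\bigcup_n G_n = G_h$; (iii) gives $N$ with $G_n = G_N$ for $n > N$, and then $G_h = G_N = \bigcup_{m<N}\bigcup_{i\in h(m)}\{X:\varphi(X)\andd\Psi_\in(X,i)\}$, which is (ii).

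There is essentially no main obstacle here — unlike in Proposition~\ref{prop-NoethEquiv}, where the implication from ``every subspace is compact'' required the trick of thinning an injection's range to an infinite recursive subset and invoking a nontrivial enumeration fact, here that clause is absent. The only points requiring a little care are bookkeeping ones: verifying that the composite objects formed above ($G_f \cap G_h$, the union $\bigcup_n G_n$, the partial unions $\bigcup_{m<n}\dots$) are again coded by functions $\Nb \imp \Pf(I)$ into the \emph{same} index set $I$, which is exactly what the closure-under-intersection clause of Definition~\ref{def-GSCspace} guarantees and what a standard pairing-function argument provides for countable unions; and checking that all the reasoning uses only $\Sigma^0_1$ (equivalently $\Pi^0_1$) induction, which it does since every induction invoked is over a bounded search or a statement of the form ``$N$ is least such that $\dots$''. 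I would therefore write the proof as ``The proof is a routine adaptation of the proof of Proposition~\ref{prop-NoethEquiv}, with the `every subspace is compact' clause omitted; one chases the definitions, replacing set operations by the corresponding formulas and using the intersection function from Definition~\ref{def-GSCspace} to form codes for finite intersections,'' and then sketch the $\text{(iii)} \Imp \text{(ii)}$ and $\text{(ii)} \Imp \text{(i)}$ steps explicitly as above, since those are the two that do real (if minor) coding work.
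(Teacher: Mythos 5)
Your overall strategy is the intended one: the paper itself gives no separate proof of Proposition~\ref{prop-UnctblNoethEquiv}, presenting it as the same definition-chasing as Proposition~\ref{prop-NoethEquiv} with the subspace clause dropped, and your handling of (i)$\Imp$(ii), (ii)$\Biimp$(iii), and (iii)$\Biimp$(iv) is correct. There is, however, a genuine gap in your (ii)$\Imp$(i) step: you form a code for $G_f \cap G_h$ ``using the base-intersection function from Definition~\ref{def-GSCspace}.'' No such function exists in that definition. Unlike the countable setting of Definition~\ref{def-CSCspace}, where $k \colon X \times I \times I \imp I$ is given as part of the data (and where $U_i \cap U_j$ can be enumerated), the second clause of Definition~\ref{def-GSCspace} only asserts, for each individual point $X$ lying in the open sets coded by $i$ and $j$, the \emph{existence} of some refining index $k \in I$; the $k$ depends on $X$, no uniformity is provided, and the defining property of $k$ is a universally quantified second-order formula that cannot be searched for in $\rca$. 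Consequently the intersection of two effectively open sets in a general second-countable space need not be (provably) effectively open, and the code for $G_f \cap G_h$ that your argument requires is not available.

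The repair is immediate and in fact shortens the proof: given $f \colon \Nb \imp \Pf(I)$ covering $G_h$, apply (ii) to the effectively open set $G_f$ itself, obtaining $N$ with $\forall X(X \in G_f \biimp (\exists n < N)(\exists i \in f(n))\Psi_\in(X,i))$; since every $X \in G_h$ lies in $G_f$, this same $N$ witnesses compactness of $G_h$ with respect to the cover $f$. With this change no intersections are needed anywhere in the argument --- the unions in (ii)$\Imp$(iii) and the partial unions in (iii)$\Imp$(ii) are pure reindexings of codes, and (iii)$\Biimp$(iv) is complementation --- which is essential, since the general framework does not support forming them.
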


\begin{Definition}[$\rca$]
A second-countable space is \emph{Noetherian} if it satisfies any of the
equivalent conditions from Proposition~\ref{prop-UnctblNoethEquiv}.
\end{Definition}

We now define $\upper(\Ps^\flat(Q))$ and $\upper(\Ps^\sharp(Q))$ as
second-countable spaces.

\begin{Definition}[$\rca$]\label{def-UnctblUpperFlat}
Let $Q$ be a quasi-order. The second-countable space $\upper(\Ps^\flat(Q))$
is coded by the set $I = \Pf(Q)$ and the formulas
\begin{itemize}
\item $\varphi(X) \coloneqq X \subseteq Q$;
\item $\Psi_=(X, Y) \coloneqq X = Y$;
\item $\Psi_\in(X, \bs i) \coloneqq \bs i \subseteq X\da$.
\end{itemize}
\end{Definition}

Notice that $\bs i = \emptyset$ codes the whole space and that the code for
the intersection of the open sets coded by $\bs i$ and $\bs j$ is simply $\bs
i \cup \bs j$. The idea behind $\Psi_\in(X, \bs i)$ is that $\bs i$ codes the
complement of the basic closed set $\{Q \setminus (q\ua) : q \in \bs
i\}\da^\flat$, whence
\begin{align*}
X \notin \{Q \setminus (q\ua) : q \in \bs i\}\da^\flat &
 \Biimp (\forall q \in \bs i)[X \nleq_Q^\flat Q \setminus (q\ua)]\\
 & \Biimp (\forall q \in \bs i)[X \nsubseteq Q \setminus (q\ua)]\\
 &\Biimp (\forall q \in \bs i)[q \in X\da] \Biimp \bs i \subseteq X\da.
\end{align*}

The basic closed sets of the upper topology on $\Ps^\flat(Q)$ are those of
the form $\{E_0, \dots, E_{n-1}\}\da^\flat$ for arbitrary subsets $E_0,
\dots, E_{n-1}$ of $Q$, whereas in Definition~\ref{def-UnctblUpperFlat} we
defined the basic closed sets to be those of the form $\{Q \setminus
(q_0\ua), \dots, Q \setminus (q_{n-1}\ua)\}\da^\flat$ for $q_0, \dots,
q_{n-1} \in Q$. Thus to show that our definition really captures the upper
topology on $\Ps^\flat(Q)$, we need to show that every $\{E_0, \dots,
E_{n-1}\}\da^\flat$ is effectively closed in the topology of
Definition~\ref{def-UnctblUpperFlat}. In fact, it suffices to show that every
set $\{E\}\da^\flat$ is effectively closed in that topology because the
effectively closed sets are closed under finite unions, and $\{E_0, \dots,
E_{n-1}\}\da^\flat = \{E_0\}\da^\flat \cup \cdots \cup \{E_{n-1}\}\da^\flat$.
Unfortunately, as the next proposition shows, proving that $\{E\}\da^\flat$
is effectively closed in the topology of Definition~\ref{def-UnctblUpperFlat}
requires $\aca$ in general, even when $Q$ is a well-order.

\begin{Proposition}\label{prop-FlatSecondCtblACA}
The following are equivalent over $\rca$.
\begin{itemize}
\item[(i)] $\aca$.
\item[(ii)] If $Q$ is a quasi-order and $E \subseteq Q$, then
    $\{E\}\da^\flat$ is effectively closed in $\upper(\Ps^\flat(Q))$.
\item[(iii)] If $W$ is a well-order and $E \subseteq W$, then
    $\{E\}\da^\flat$ is effectively closed in $\upper(\Ps^\flat(W))$.
\end{itemize}
\end{Proposition}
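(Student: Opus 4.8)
\emph{Plan.} The implications (i) $\Rightarrow$ (ii) $\Rightarrow$ (iii) are routine; the content is in (iii) $\Rightarrow$ (i). For (i) $\Rightarrow$ (ii), work in $\aca$. Given a quasi-order $Q$ and $E \subseteq Q$, the set $E\da = \{q \in Q : (\exists p \in E)(q \leq_Q p)\}$ is $\Sigma^0_1$ and hence exists. Define $h \colon \Nb \imp \Pf(\Pf(Q))$ by $h(n) = \{\{n\}\}$ when $n \in Q \setminus E\da$ and $h(n) = \emptyset$ otherwise. Then $G_h = \bigcup_{q \in Q \setminus E\da}\{X \subseteq Q : q \in X\da\}$, so for $X \subseteq Q$ we have $X \in F_h$ iff no element of $Q \setminus E\da$ lies in $X\da$, i.e.\ iff $X\da \subseteq E\da$, i.e.\ (as $E\da$ is downward closed) iff $X \subseteq E\da$, i.e.\ iff $X \leq_Q^\flat E$; thus $F_h = \{E\}\da^\flat$ is effectively closed. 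For (ii) $\Rightarrow$ (iii), a well-order is a quasi-order and the space $\upper(\Ps^\flat(W))$ of Definition~\ref{def-UnctblUpperFlat} is exactly the $Q = W$ instance of $\upper(\Ps^\flat(Q))$, so (iii) is a special case of (ii).

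For (iii) $\Rightarrow$ (i) I would use Lemma~\ref{lem-ACAinjection}: given an injection $f \colon \Nb \imp \Nb$, I show $\ran f$ exists. The key point is that effective closedness forces $E\da$ to exist as a set: if $h$ witnesses that $\{E\}\da^\flat$ is effectively closed in $\upper(\Ps^\flat(W))$, then testing $h$ against the one-point subsets $\{q\}$ gives, for each $q \in W$,
\begin{align*}
q \in E\da \Biimp \{q\} \in \{E\}\da^\flat \Biimp \{q\} \in F_h \Biimp (\forall n)(\forall \bs i \in h(n))\ \bs i \nsubseteq q\da,
\end{align*}
and the rightmost formula is $\Pi^0_1$ in $h$ and $\leq_W$ because ``$\bs i \subseteq q\da$'' abbreviates the decidable ``$(\forall a \in \bs i)(a \leq_W q)$''; since ``$q \in E\da$'' is also $\Sigma^0_1$, $\Delta^0_1$ comprehension produces $E\da$. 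So (iii) implies that $E\da$ exists for every well-order $W$ and every $E \subseteq W$ (note that this cannot help for genuinely well-founded recursive $W$, since there $E\da \leqT W\oplus E$; the reversal must exploit pseudo-well-orders).

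Accordingly, I would build from $f$, following the construction of the recursive linear order of type $\omega+\omega^*$ used in \cites{MarSho11,FriMar12,FriMar14} (the linear precursor of Definition~\ref{def-Xi}), a recursive linear order $W = W_f$ every one of whose infinite descending sequences computes $\ran f$ — so that if $\ran f$ does not exist then $W_f$ is a well-order — and I would additionally equip $W_f$ with a recursively indexed family of markers $d_n$ ($n \in \Nb$) in its $\omega^*$-part together with a recursive ``witness'' set $E \subseteq W_f$, arranged so that $d_n \leq_{W_f} p$ holds for some $p \in E$ precisely when $f$ has enumerated $n$ by some stage; then $d_n \in E\da$ iff $n \in \ran f$. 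Now argue by contradiction: if $\ran f$ does not exist, then $W_f$ has no infinite descending sequence, hence is a well-order, so by (iii) and the key point $E\da$ exists as a set, whence $\ran f = \{n : d_n \in E\da\}$ exists by $\Delta^0_1$ comprehension — a contradiction. Therefore $\ran f$ exists, and Lemma~\ref{lem-ACAinjection} yields $\aca$.

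The main obstacle is the construction in the last paragraph: one must simultaneously keep $W_f$ a pseudo-well-order whose infinite descending sequences all compute $\ran f$ and fit the markers $d_n$ and the recursive witness set $E$ into the $\omega+\omega^*$ skeleton so that $E\da$ still decodes $\ran f$ (the recursive enumeration of $\ran f$ must be used both to drive the ``finalization'' that produces the $\omega$-part and to wire up the $d_n$--$E$ incidences). Everything else is a matter of unwinding Definition~\ref{def-GSCspace} and Definition~\ref{def-UnctblUpperFlat}.
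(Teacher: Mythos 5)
Your (i)$\Rightarrow$(ii) and (ii)$\Rightarrow$(iii) are correct and coincide with the paper's argument, and your ``key point'' --- that a code $h$ witnessing that $\{E\}\da^\flat$ is effectively closed makes $E\da$ a $\Pi^0_1$ set via $q \in E\da \biimp \{q\} \in F_h$, whence $E\da$ exists by $\Delta^0_1$ comprehension --- is exactly the pivot of the paper's reversal. The gap is in (iii)$\Rightarrow$(i): the construction that is supposed to carry the reversal is not carried out (you flag it yourself as ``the main obstacle''), and as described it cannot work. You ask for a recursively indexed infinite family of markers $(d_n)_{n \in \Nb}$ lying in the $\omega^*$-part of $W_f$. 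But every element of the $\omega^*$-part of an $\omega+\omega^*$ linear order has only finitely many elements above it, so from such a family one extracts an infinite descending sequence by a terminating recursive search (given $e_k$, some $d_m$ must fail $d_m \geq e_k$ and hence satisfy $d_m < e_k$). Thus $\rca$ proves outright that your $W_f$ is \emph{not} a well-order, hypothesis (iii) is never applicable to it, and the property ``every infinite descending sequence computes $\ran f$'' fails as well, since this extracted sequence is recursive in $f$. A secondary problem: in the $\omega+\omega^*$ constructions you cite, nothing placed after a true stage ever goes above it, so the upward cone of an $\omega^*$-part element is a uniformly computable finite set; for recursive $E$ the predicate ``$d_n \in E\da$'' would then be decidable and could not code the $\Sigma^0_1$-complete predicate $n \in \ran f$.

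The marker machinery is in any case unnecessary. The paper's proof takes $W = \Xi_f(\{x\},x)$ --- order type $\omega+\omega^*$, every infinite descending sequence computes $\ran f$, and the $\omega$-part $\Sigma^0_1$ in $f$ --- and chooses $E$ to be any infinite subset of the $\omega$-part, so that $E\da$ is the \emph{entire} $\omega$-part. If $W$ is not a well-order, $\ran f$ exists and we are done; if $W$ is a well-order, then (iii) together with your key point makes the $\omega$-part $\Delta^0_1$, hence a set, hence its complement (the $\omega^*$-part, i.e.\ the set of $f$-true stages) is a set yielding an infinite descending sequence --- contradicting well-orderedness. So the only missing idea is that $E$ should be an infinite subset of the $\omega$-part, with $E\da$ equal to the $\omega$-part itself; no additional encoding of $\ran f$ into the order is needed.
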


\begin{proof}
For (i)$\Imp$(ii), let $Q$ be a quasi-order and let $E \subseteq Q$. Using
$\aca$ to obtain the set $E\da$, we can define a code for the effectively
closed set $F = \bigcap_{q \notin E\da}\{Q \setminus (q\ua)\}\da^\flat$.  Then for
any $X \subseteq Q$,
\begin{align*}
X\in \{E\}\da^\flat \ \Biimp \ X \subseteq E\da \ \Biimp \
(\forall q \notin E\da)[X \subseteq Q \setminus (q\ua)] \ \Biimp \ X \in F.
\end{align*}
Thus $F = \{E\}\da^\flat$, and so $\{E\}\da^\flat$ is effectively closed.

The implication (ii)$\Imp$(iii) is clear.

For (iii)$\Imp$(i), let $f \colon \Nb \imp \Nb$ be an injection. By
Lemma~\ref{lem-ACAinjection}, it suffices to show that the range of $f$
exists. Let $W$ be a linear order with the following properties:
\begin{itemize}
\item $W$ has order-type $\omega + \omega^*$ (that is, every element of $W$
    has either finitely many predecessors or finitely many successors, and
    there are infinitely many instances of each);
\item if $W$ is not a well-order, then the range of $f$ exists; and
\item the $\omega$ part of $W$ is $\Sigma^0_1$ in $f$.
\end{itemize}
That such a $W$ can be constructed in $\rca$ is well-known (see for example
\cite[Lemma~4.2]{MarSho11}). In fact, our main reversals in the next section
are based on the construction of generalizations of such a $W$, and one may
take $W = \Xi_f(\{x\},x)$, where $\Xi_f(\{x\},x)$ is the partial order (in
this case linear order) from Definition~\ref{def-Xi}.

If $W$ is not a well-order, then the range of $f$ exists by the assumptions
on $W$. So suppose that $W$ is a well-order. Let $E$ be an infinite subset of
the $\omega$ part of $W$, which exists for the same reason that the $X'$ in
the proof of Proposition~\ref{prop-NoethEquiv} exists because the $\omega$
part of $W$ is infinite and $\Sigma^0_1$ in $f$. By (iii), $\{E\}\da^\flat$
is effectively closed, so there is an $h \colon \Nb \imp \Pf(\Pf(Q))$ such
that $\{E\}\da^\flat=F_h$. For each $w \in W$,
$\{w\} \in F_h = \{E\}\da^\flat$ if and only if $w \in E\da$ if and only if
$w$ is in the $\omega$ part of $W$. On the other hand, by definition $\{w\}
\in F_h$ if and only if $\forall k (\forall \bs i \in h(k))(\bs i \nsubseteq
w\da)$, which is $\Pi^0_1$. Thus we have a $\Pi^0_1$ definition of the
$\omega$ part of $W$. Thus by $\Delta^0_1$ comprehension, the $\omega$ part
of $W$ exists. Therefore the $\omega^*$ part of $W$ also exists,
contradicting that $W$ is a well-order.
\end{proof}

The previous proposition is not merely an artifact of a poorly chosen base in
the definition of $\upper(\Ps^\flat(Q))$. In fact, the proof of the reversal
goes through whenever $\Psi_\in$ is defined in such a way that
$\Psi_\in(\{b\}, i)$ is $\Sigma^0_1$. Thus we can see Proposition
\ref{prop-FlatSecondCtblACA} as expressing that the second-countability of
the upper topology on $\Ps^\flat(Q)$ is equivalent to $\aca$.

\begin{Definition}[$\rca$]\label{def-UnctblUpperSharp}
Let $Q$ be a quasi-order. The second-countable space $\upper(\Ps^\sharp(Q))$
is coded by the set $I = \Pf(Q)$ and the formulas
\begin{itemize}
\item $\varphi(X) \coloneqq X \subseteq Q$;
\item $\Psi_=(X, Y) \coloneqq X = Y$;
\item $\Psi_\in(X, \bs i) \coloneqq \bs i \cap X\ua = \emptyset$.
\end{itemize}
\end{Definition}

Again, $\bs i = \emptyset$ codes the whole space, and the code for the
intersection of the open sets coded by $\bs i$ and $\bs j$ is $\bs i \cup \bs
j$. The idea behind $\Psi_\in(X, \bs i)$ is that $\bs i$ codes the complement
of the basic closed set $\{\{q\} : q \in \bs i\}\da^\sharp$, whence
\begin{align*}
X \notin \{\{q\} : q \in \bs i\}\da^\sharp &
\Biimp (\forall q \in \bs i)(X \nleq_Q^\sharp \{q\}) \\ &
\Biimp (\forall q \in \bs i)(q \notin X\ua)
\Biimp \bs i \cap X\ua = \emptyset.
\end{align*}

Unlike in the $\flat$ case, $\rca$ suffices to show that
$\upper(\Ps^\sharp(Q))$ as defined in Definition~\ref{def-UnctblUpperSharp}
indeed captures the upper topology on $\Ps^\sharp(Q)$.

\begin{Proposition}[$\rca$]\label{prop-SharpSecondCtblRCA}
If $Q$ is a quasi-order and $E \subseteq Q$, then $\{E\}\da^\sharp$ is
effectively closed in $\upper(\Ps^\sharp(Q))$.
\end{Proposition}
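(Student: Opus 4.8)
The plan is to exhibit, for an arbitrary $E \subseteq Q$, an explicit code $h \colon \Nb \imp \Pf(I)$ (with $I = \Pf(Q)$) such that $F_h = \{E\}\da^\sharp$, where recall that $X \in \{E\}\da^\sharp$ means $X \leq_Q^\sharp E$, i.e.\ $E \subseteq X\ua$, i.e.\ $(\forall e \in E)(\exists x \in X)(x \leq_Q e)$. The key observation is that, unlike the $\flat$ case, the "offending" condition can be read off directly from the single set $E$ together with one element of $Q$ at a time, without needing the (possibly non-$\Sigma^0_1$) set $E\ua$ or $E\da$. Concretely, $X \notin \{E\}\da^\sharp$ means $(\exists e \in E)(\forall x \in X)(x \nleq_Q e)$, which says that $X$ lies in the basic open set $\Psi_\in(\cdot, \{e\})$ for some $e \in E$, since $\Psi_\in(X, \{e\})$ holds iff $\{e\} \cap X\ua = \emptyset$ iff $(\forall x \in X)(x \nleq_Q e)$.

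So the proof is: enumerate $E$ as $(e_n)_{n \in \Nb}$ (formally, fix $g \colon \Nb \imp E$ whose range is $E$; if $E$ is finite this is still fine, and if $E = \emptyset$ then $\{E\}\da^\sharp$ is all of $\Ps(Q)$, which is the effectively closed set coded by the constantly-$\emptyset$ function, so assume $E \neq \emptyset$), and define $h(n) = \{\{e_n\}\}$, a single code in $\Pf(I)$. Then
\begin{align*}
X \in G_h \ &\Biimp \ (\exists n)\,\Psi_\in(X, \{e_n\})
\ \Biimp \ (\exists n)(\{e_n\} \cap X\ua = \emptyset) \\
&\Biimp \ (\exists e \in E)(\forall x \in X)(x \nleq_Q e)
\ \Biimp \ E \nsubseteq X\ua
\ \Biimp \ X \notin \{E\}\da^\sharp,
\end{align*}
so $F_h = X \setminus G_h = \{E\}\da^\sharp$ (here I am using $\varphi(X)$, i.e.\ $X \subseteq Q$, throughout, and the chain of equivalences is exactly the unwinding of the definitions of $\Psi_\in$, $X\ua$, and $\leq_Q^\sharp$). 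This is all carried out in $\rca$ since the only object constructed is the function $h$, which is primitive recursive in $g$.

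There is essentially no obstacle here; the point of the proposition is precisely the contrast with Proposition~\ref{prop-FlatSecondCtblACA}, and the reason it goes through in $\rca$ is structural: the $\sharp$-ordering's defining formula puts the bounded quantifier over $E$ in the outermost position, so negating membership in $\{E\}\da^\sharp$ produces an honest existential over elements of $E$, each instance of which is a basic open set of the space as defined in Definition~\ref{def-UnctblUpperSharp}. The only thing requiring a word of care is the degenerate case $E = \emptyset$ (handled above by the constant-$\emptyset$ code) and making sure that a finite $E$ is still covered — but a surjection $g \colon \Nb \imp E$ exists in $\rca$ for any nonempty $E$ coded as a set, so the same $h$ works uniformly.
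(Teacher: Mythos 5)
Your proof is correct and is essentially the paper's own argument: both identify $\{E\}\da^\sharp$ with the effectively closed set $\bigcap_{e\in E}\{e\}\da^\sharp$, which is exactly what your code $h(n)=\{\{e_n\}\}$ (for an enumeration $(e_n)$ of $E$) implements. Your extra remarks about the $E=\emptyset$ case and the quantifier structure explaining the contrast with the $\flat$ case are fine but not a different route.
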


\begin{proof}
Let $Q$ be a quasi-order, and let $E \subseteq Q$. Then $\{E\}\da^\sharp $ is
exactly the effectively closed set $\bigcap_{e \in E}\{e\}\da^\sharp$ because
\begin{align*}
X\in\{E\}\da^\sharp \ \Biimp \  E \subseteq X\ua  \ \Biimp \  (\forall e \in E)(e \in X
\ua)  \ \Biimp \  X \in \bigcap_{e \in E}\{e\}\da^\sharp
\end{align*}
for any $X \subseteq Q$.
\end{proof}

We now examine the correspondences between the countable spaces and the
uncountable spaces. Our goal is to prove, in $\rca$, that if $Q$ is a
quasi-order, then $\upper(\Ps^\flat(Q))$ is Noetherian implies that
$\upper(\Pf^\flat(Q))$ is Noetherian and likewise with `$\sharp$' in place of
`$\flat$.'

We warn the reader that the upper topology on $\Pf^\flat(Q)$ is in general
not the same as the subspace topology on $\Pf(Q)$ induced by the upper
topology on $\Ps^\flat(Q)$. For example, if $Q$ is an infinite antichain and
$q \in Q$, then the basic closed set $\{Q \setminus \{q\}\}\da^\flat$ in the
upper topology on $\Ps^\flat(Q)$ induces the closed set $\{\bs x \in \Pf(Q) :
q \notin \bs x\}$ in the subspace topology on $\Pf(Q)$, but this set is not
closed in the upper topology on $\Pf^\flat(Q)$. To see this, observe that a
closed set in the upper topology on $\Pf^\flat(Q)$ that is not the whole space
must be contained in a basic closed set of the form $\{\bs{e}_i : i <
n\}\da^\flat$, and if $\bs x \in \{\bs{e}_i : i < n\}\da^\flat$, then $|\bs
x| \leq \max\{|\bs{e}_i| : i < n\}$. A similar argument shows that these two
topologies need not be the same even if $Q$ is a well-order. Let $Q =
\omega+1$. Then the closed set $\omega\da^\flat$ in $\Ps^\flat(Q)$ induces
the closed set $\{\bs x \in \Pf(Q) : \bs x \subseteq \omega\}$ in the
subspace topology on $\Pf(Q)$, but the closed sets in the upper topology on
$\Pf^\flat(Q)$ are all of the form $\{\bs x \in \Pf(Q) : \bs x \subseteq
q\da\}$ for some $q \in Q$.

However, the upper topology on $\Pf^\sharp(Q)$ is indeed the same as the
subspace topology on $\Pf(Q)$ induced by the upper topology on
$\Ps^\sharp(Q)$. This is easy to see because $\{\{q_i\} : i < n\}\da^\sharp$
contains the same finite sets regardless of whether it is interpreted as a
basic closed set in the upper topology on $\Ps^\sharp(Q)$ or as a basic
closed set in the upper topology on $\Pf^\sharp(Q)$.

\begin{Lemma}[$\rca$]\label{lem-CtblUnctblTranslate}
Let $Q$ be a quasi-order.
\begin{itemize}
\item[(i)] For every effectively closed set $F$ in $\upper(\Pf^\flat(Q))$,
    there is an effectively closed set $\MP F$ in $\upper(\Ps^\flat(Q))$
    such that $(\forall \bs x \in \Pf(Q))(\bs x \in \MP F \biimp \bs x \in
    F)$.

\item[(ii)] For every effectively closed set $F$ in
    $\upper(\Pf^\sharp(Q))$,  there is an effectively closed set $\MP F$ in
    $\upper(\Ps^\sharp(Q))$ such that $(\forall \bs x \in \Pf(Q))(\bs x \in
    \MP F \biimp \bs x \in F)$.
\end{itemize}
\end{Lemma}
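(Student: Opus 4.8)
The plan is to convert a code for an effectively closed set $F$ in the countable space $\upper(\Pf^\flat(Q))$ into a code for an effectively closed set $\MP F$ in the uncountable space $\upper(\Ps^\flat(Q))$ by translating basic open sets one at a time. Recall that in $\upper(\Pf^\flat(Q))$ a basic open set is indexed by some $\bs i \in \Pf(Q)$ and equals $\Pf(Q) \setminus (\bs i \da)$, i.e. $\{\bs x \in \Pf(Q) : \bs x \nleq_Q^\flat \bs i\}$; equivalently its complement is $\{\bs x \in \Pf(Q) : \bs x \subseteq \bs i \da\}$. In $\upper(\Ps^\flat(Q))$, by Definition~\ref{def-UnctblUpperFlat}, a basic open set is indexed by some $\bs j \in \Pf(Q)$ and, restricted to finite sets, has complement $\{\bs x \in \Pf(Q) : \bs j \subseteq \bs x \da\}$. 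So I want, for each fixed $\bs i$, to express $\{\bs x \in \Pf(Q) : \bs x \subseteq \bs i\da\}$ as an (effectively) closed set in $\upper(\Ps^\flat(Q))$, uniformly in $\bs i$.

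The key observation is that $\bs x \subseteq \bs i \da$ holds for a finite set $\bs x$ if and only if for every element $q \in Q \setminus (\bs i \da)$ we have $q \notin \bs x$, which in turn (since $\bs x$ is finite, so $\bs x \da$ avoids $q$ precisely when $q \notin \bs x \da$, but actually we need $q \notin \bs x$, and because $q \notin \bs i \da$ we cannot have $q \le_Q$ anything in $\bs x \subseteq \bs i \da$, so $q \notin \bs x \da$ is equivalent to $q \notin \bs x$ for such $q$) is equivalent to $\{q\} \nsubseteq \bs x \da$. Thus
\begin{align*}
\bs x \subseteq \bs i \da \ \Biimp\ (\forall q \in Q \setminus (\bs i \da))\,[\{q\} \nsubseteq \bs x \da],
\end{align*}
and the right-hand side is exactly the statement that $\bs x$ lies in the intersection $\bigcap_{q \notin \bs i\da} V_{\{q\}}$ of basic open sets of $\upper(\Ps^\flat(Q))$. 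So the complement of this intersection — a $\Sigma^0_1$-indexed union of basic closed sets, hence an effectively closed set of $\upper(\Ps^\flat(Q))$ once we observe $Q \setminus (\bs i \da)$ is a $\Delta^0_1$ (indeed $\Pi^0_1$, but here even $\Delta^0_1$ since $\bs i$ is finite) subset of $Q$ — agrees, on finite sets, with the countable basic open set indexed by $\bs i$. Concretely, given the code $h \colon \Nb \imp \Pf(\Pf(Q))$ for $F = F_h$ in $\upper(\Pf^\flat(Q))$, I build $\MP h \colon \Nb \imp \Pf(\Pf(Q))$ (the index set $I = \Pf(Q)$ is the same for both spaces) by replacing each $\bs i$ appearing in some $h(n)$ with the list of singletons $\{q\}$ for $q$ ranging over $Q \setminus (\bs i\da)$; this is done via a $\Delta^0_1$-definable bookkeeping so that $\MP h$ genuinely enumerates finite subsets of $\Pf(Q)$. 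Then $\MP F := F_{\MP h}$ works: unwinding the definitions, $\bs x \in \MP F$ iff $\bs x$ avoids all the basic open sets enumerated, iff $\bs x \subseteq \bs i \da$ for every $\bs i$ ``removed'' by $h$, iff $\bs x \in F$.

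Part~(ii) is analogous but cleaner, and indeed almost immediate, because by the remark preceding the lemma the upper topology on $\Pf^\sharp(Q)$ literally is the subspace topology induced from $\upper(\Ps^\sharp(Q))$: a basic closed set $\{\{q_i\} : i < n\}\da^\sharp$ contains exactly the same finite sets whether read in $\Pf^\sharp(Q)$ or in $\Ps^\sharp(Q)$. So here one takes $\MP h = h$ essentially verbatim — the index set $\Pf(Q)$ and the meaning of $\Psi_\in$ on finite sets match — and only needs to check, via the equivalences in the proof of Proposition~\ref{prop-SharpSecondCtblRCA} and Definition~\ref{def-UnctblUpperSharp}, that membership of a finite $\bs x$ in the effectively closed set is unchanged. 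The main obstacle is the $\flat$ case: one must be careful that the translation is carried out in $\rca$, i.e. that forming $Q \setminus (\bs i\da)$ and assembling $\MP h$ requires only $\Delta^0_1$ comprehension (which it does, since each $\bs i$ is finite), and one must verify the equivalence $\bs x \subseteq \bs i\da \Biimp (\forall q \notin \bs i\da)[\{q\} \nsubseteq \bs x\da]$ genuinely holds for \emph{finite} $\bs x$ — the forward direction is trivial and the backward direction uses that $\bs x$ is finite so that $q \notin \bs x$ together with $q \notin \bs i\da \supseteq \bs x\da$ gives $\{q\} \nsubseteq \bs x\da$, noting it is exactly the restriction to finite sets that makes this go through without $\aca$, in contrast to Proposition~\ref{prop-FlatSecondCtblACA}.
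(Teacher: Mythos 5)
There is a genuine gap, and it is in the identification of the basic open sets of the countable space. You take a basic closed set of $\upper(\Pf^\flat(Q))$ to be $\{\bs i\}\da^\flat$ for a single $\bs i \in \Pf(Q)$, i.e.\ the downward closure of one point of the quasi-order $\Pf^\flat(Q)$. But by the definition of the upper topology on a quasi-order, the base of $\upper(\Pf^\flat(Q))$ is indexed by $\Pf(\Pf(Q))$: a basic closed set is $E\da^\flat = \{\bs e_0,\dots,\bs e_{n-1}\}\da^\flat = \bigcup_{i<n}\{\bs x : \bs x \subseteq \bs e_i\da\}$, a finite \emph{union} of the sets you consider, and an effectively closed set is an infinite intersection of such finite unions. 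Your translation correctly handles the $n=1$ case, $\{\bs i\}\da^\flat = \bigcap_{q \notin \bs i\da}\{Q\setminus(q\ua)\}\da^\flat$ restricted to $\Pf(Q)$, but it never addresses how to realize the finite union $\bigcup_{i<n}\bigcap_{q\notin\bs e_i\da}(\cdots)$ as an effectively closed set of $\upper(\Ps^\flat(Q))$, whose building blocks are themselves the finite unions $\{Q\setminus(q\ua) : q \in \bs i\}\da^\flat$. This is exactly where the real work lies: one must distribute the union over the infinite intersection, setting
\begin{align*}
\MP F_E = \bigcap_{\substack{(q_0,\dots,q_{n-1})\in Q^n\\ (\forall i<n)(q_i\notin\bs e_i\da)}}\{Q\setminus(q_i\ua) : i<n\}\da^\flat,
\end{align*}
and then verify by hand that this agrees with $E\da^\flat$ on $\Pf(Q)$ (the nontrivial direction picks, for $\bs x\notin E\da^\flat$, a witness $q_i\in\bs x\setminus\bs e_i\da$ for each $i<n$ simultaneously). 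Without this step the proof covers only a subbase, and an intersection of the subbasic closed sets is in general strictly smaller than the given effectively closed set.

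The same gap undoes your treatment of part (ii). Taking ``$\MP h = h$ verbatim'' does not typecheck: the index set of $\upper(\Pf^\sharp(Q))$ is $\Pf(\Pf(Q))$, so a basic closed set is $\{\bs e_0,\dots,\bs e_{n-1}\}\da^\sharp$ for arbitrary finite $\bs e_i\subseteq Q$, whereas the basic closed sets of $\upper(\Ps^\sharp(Q))$ in Definition~\ref{def-UnctblUpperSharp} are indexed by $\Pf(Q)$ and have the special form $\{\{q\} : q\in\bs i\}\da^\sharp$. The remark preceding the lemma only asserts that sets of this special form mean the same thing in both spaces; to prove (ii) one must still rewrite a general $E\da^\sharp$ as $\bigcap_{(q_0,\dots,q_{n-1})\in\bs e_0\times\cdots\times\bs e_{n-1}}\{\{q_0\},\dots,\{q_{n-1}\}\}\da^\sharp$ and check the two inclusions, which is again the union-over-intersection distributivity that your proposal omits.
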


\begin{proof}
We first prove (i) for basic closed sets. A basic closed set in
$\upper(\Pf^\flat(Q))$ has the form $E\da^\flat$ for some $E \in
\Pf(\Pf(Q))$. Suppose that $E = \{\bs{e}_0,\dots,\bs{e}_{n-1}\}$, and
consider the effectively closed set $\MP{F}_E$ in $\upper(\Ps^\flat(Q))$
given by
\begin{align*}
\MP{F}_E = \bigcap_{\substack{(q_0,\dots,q_{n-1}) \in Q^n \\ (\forall i < n)(q_i 
\notin
\bs{e}_i\da)}}\{Q \setminus (q_i\ua) : i < n\}\da^\flat.
\end{align*}
We show that $(\forall \bs x \in \Pf(Q))(\bs x \in \MP{F}_E \biimp \bs x \in
E\da^\flat)$. Suppose that $\bs x \in E\da^\flat$. Then there is an $i<n$
such that $\bs x \leq_Q^\flat \bs{e}_i$, so $\bs x \subseteq \bs{e}_i\da$,
and therefore $(\forall q \notin \bs{e}_i\da)[\bs x \subseteq Q \setminus
(q\ua)]$. Hence $\bs x \in \MP{F}_E$. Conversely, suppose that $x \notin
E\da^\flat$. Then $(\forall i < n)(\bs x \nleq_Q^\flat \bs{e}_i)$, so
$(\forall i < n)(\bs x \nsubseteq \bs{e}_i\da)$, and finally $(\forall i <
n)(\exists q_i \in \bs x)(q_i \notin \bs{e}_i\da)$. Then $\MP{F}_E \subseteq
\{Q \setminus (q_i\ua) : i < n\}\da^\flat$ and $\bs x \notin \{Q \setminus
(q_i\ua) : i < n\}\da^\flat$. Thus $\bs x \notin \MP{F}_E$.

To complete the proof of (i), let us now consider the effectively closed set
$F_h = \bigcap_{n \in \Nb}\bigcap_{E \in h(n)}E\da^\flat$ in
$\upper(\Pf^\flat(Q))$ coded by $h \colon \Nb \imp \Pf(\Pf(\Pf(Q)))$.
The procedure that produces (the code for) $\MP{F}_E$ given $E \in
\Pf(\Pf(Q))$ is uniform in $E$, so from $h$ we can produce $g \colon \Nb
\times \Nb \imp \Pf(\Pf(Q))$ such that, for every $n \in \Nb$,
$\MP{F}_{g(n,\cdot)} = \bigcap_{E \in h(n)}\MP{F}_E$. The intersection of a
sequence of effectively closed sets is also an effectively closed set, so
from $g$ we can produce a code for the effectively closed set $\MP F =
\bigcap_{n \in \Nb}\MP{F}_{g(n,\cdot)} = \bigcap_{n \in \Nb}\bigcap_{E \in
h(n)}\MP{F}_E$. Then, for any $\bs x \in \Pf(Q)$,
\begin{align*}
\bs x \in \MP F \Biimp \bs x \in \bigcap_{n \in \Nb}\bigcap_{E \in h(n)}\MP{F}_E
\Biimp \bs x \in \bigcap_{n \in \Nb}\bigcap_{E \in h(n)}E\da^\flat \Biimp \bs x \in
F_h.
\end{align*}

Now we prove (ii) for basic closed sets. A basic closed set in
$\upper(\Pf^\sharp(Q))$ has the form $E\da^\sharp$ for some $E \in
\Pf(\Pf(Q))$. Suppose that $E = \{\bs{e}_0,\dots,\bs{e}_{n-1}\}$, and
consider the effectively closed set $\MP{F}_E$ in $\upper(\Ps^\sharp(Q))$
given by
\begin{align*}
\MP{F}_E = \bigcap_{(q_0,\dots,q_{n-1}) \in \bs{e}_0 \times \cdots \times \bs{e}
_{n-1}}\{\{q_0\}, \dots, \{q_{n-1}\}\}\da^\sharp.
\end{align*}
We show that $(\forall \bs x \in \Pf(Q))(\bs x \in \MP{F}_E \biimp \bs x \in
E\da^\sharp)$. Suppose that $\bs x \in E\da^\sharp$. Then there is an $i<n$
such that $\bs x\leq_Q^\sharp \bs{e}_i$, so $\bs{e}_i \subseteq \bs{x}\ua$,
and therefore $(\forall q \in \bs{e}_i)(q \in \bs{x}\ua)$.
Hence $\bs x \in \MP{F}_E$. Conversely, suppose that $x \notin E\da^\sharp$.
Then $(\forall i < n)(\bs x \nleq_Q^\sharp \bs{e}_i)$, so $(\forall i <
n)(\bs{e}_i \nsubseteq \bs{x}\ua)$, and therefore $(\forall i < n)(\exists q_i \in
\bs{e}_i)(q_i \notin \bs{x}\ua)$. Then $\MP{F}_E \subseteq \{\{q_0\}, \dots,
\{q_{n-1}\}\}\da^\sharp$ and $\bs x \notin
\{\{q_0\}, \dots, \{q_{n-1}\}\}\da^\sharp$. Thus $\bs x \notin \MP{F}_E$.

To complete the proof of (ii), given an effectively closed set $F$ in
$\upper(\Pf^\sharp(Q))$, we can produce an effectively closed set $\MP F$ in
$\upper(\Ps^\sharp(Q))$ such that $(\forall \bs x \in \Pf(Q))(\bs x \in \MP F
\biimp \bs x \in F)$ just as in the proof of (i).
\end{proof}

\begin{Theorem}[$\rca$]\label{thm-UnctblNoethImpliesCtblNoeth}
Let $Q$ be a quasi-order.
\begin{itemize}
\item[(i)] If $\upper(\Ps^\flat(Q))$ is Noetherian, then
    $\upper(\Pf^\flat(Q))$ is Noetherian.

\item[(ii)] If $\upper(\Ps^\sharp(Q))$ is Noetherian, then
    $\upper(\Pf^\sharp(Q))$ is Noetherian.
\end{itemize}
\end{Theorem}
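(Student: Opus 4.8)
The plan is to reduce the Noetherianity of the countable space $\upper(\Pf^\flat(Q))$ (resp.\ $\upper(\Pf^\sharp(Q))$) to that of the uncountable space $\upper(\Ps^\flat(Q))$ (resp.\ $\upper(\Ps^\sharp(Q))$) by transporting descending sequences of effectively closed sets between the two spaces using Lemma~\ref{lem-CtblUnctblTranslate}. I will use the characterization of Noetherian from Proposition~\ref{prop-NoethEquiv}(v) in the countable case and from Proposition~\ref{prop-UnctblNoethEquiv}(iv) in the uncountable case, so that both amount to the statement that every descending sequence of effectively closed sets stabilizes.

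First I would fix a descending sequence $(F_n)_{n \in \Nb}$ of effectively closed sets in $\upper(\Pf^\flat(Q))$, coded by some $g \colon \Nb \times \Nb \imp \Pf(\Pf(\Pf(Q)))$, with $F_n \supseteq F_{n+1}$ for all $n$. Applying Lemma~\ref{lem-CtblUnctblTranslate}(i) uniformly in $n$, I obtain a sequence $(\MP F_n)_{n \in \Nb}$ of effectively closed sets in $\upper(\Ps^\flat(Q))$ such that $(\forall \bs x \in \Pf(Q))(\bs x \in \MP F_n \biimp \bs x \in F_n)$ for each $n$. Here I must check that the translation procedure of Lemma~\ref{lem-CtblUnctblTranslate} is sufficiently uniform in its input to be applied along a sequence within $\rca$; inspecting that proof, the passage from $h$ to (the code for) $\MP F$ is given by an explicit recursive transformation, so this causes no difficulty. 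The next point requiring care is that $(\MP F_n)_{n \in \Nb}$ need not literally be a descending sequence as subsets of $\Ps(Q)$: Lemma~\ref{lem-CtblUnctblTranslate} only controls membership of \emph{finite} sets. To fix this, I replace $\MP F_n$ by $\MP F_n' = \bigcap_{m \leq n} \MP F_m$, which is again an effectively closed set (the intersection of finitely many effectively closed sets is effectively closed, uniformly), is genuinely descending, and still satisfies $(\forall \bs x \in \Pf(Q))(\bs x \in \MP F_n' \biimp \bs x \in F_n)$ because the $F_n$ are themselves descending, so $\bs x \in F_n$ implies $\bs x \in F_m$ for all $m \leq n$.

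Now I apply the hypothesis that $\upper(\Ps^\flat(Q))$ is Noetherian to the descending sequence $(\MP F_n')_{n \in \Nb}$, obtaining an $N$ with $(\forall n > N)(\MP F_n' = \MP F_N')$, i.e.\ $(\forall n > N)(\forall X \subseteq Q)(X \in \MP F_n' \biimp X \in \MP F_N')$. Restricting the outer quantifier to $X \in \Pf(Q)$ and using the membership equivalence with the $F_n$, I get $(\forall n > N)(\forall \bs x \in \Pf(Q))(\bs x \in F_n \biimp \bs x \in F_N)$, which is exactly $(\forall n > N)(F_n = F_N)$ since the $F_n$ are subsets of $\Pf(Q)$; thus $\upper(\Pf^\flat(Q))$ is Noetherian. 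Part (ii) is identical, using Lemma~\ref{lem-CtblUnctblTranslate}(ii) and the $\sharp$-spaces in place of the $\flat$-spaces. The main obstacle is the one flagged above — that the translated sequence is only pointwise-correct on finite sets and need not be descending — and it is handled cleanly by taking running intersections; everything else is definition-chasing and a routine uniformity check inside $\rca$.
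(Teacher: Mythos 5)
Your proposal is correct and follows essentially the same route as the paper: apply Lemma~\ref{lem-CtblUnctblTranslate} uniformly along the sequence, then take running intersections $\bigcap_{m \leq n} \MP{F}_m$ to repair the fact that the translated sets are only controlled on finite subsets of $Q$ and hence need not form a descending sequence. The only cosmetic difference is that the paper argues by contrapositive (a non-stabilizing sequence in the countable space yields one in the uncountable space), while you argue directly; the key construction and the justification that the intersected sequence still agrees with $(F_n)$ on $\Pf(Q)$ are identical.
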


\begin{proof}
For (i), suppose that $\upper(\Pf^\flat(Q))$ is not Noetherian, and let
$(F_n)_{n \in \Nb}$ be a non-stabilizing descending sequence of effectively
closed sets in $\upper(\Pf^\flat(Q))$. The proof of
Lemma~\ref{lem-CtblUnctblTranslate}~(i) is uniform, so from $(F_n)_{n \in
\Nb}$ we can produce a sequence $(\MP{F}_n)_{n \in \Nb}$ of effectively
closed sets in $\upper(\Ps^\flat(Q))$ such that $(\forall n \in \Nb)(\forall
\bs x \in \Pf(Q))(\bs x \in \MP{F}_n \biimp \bs x \in F_n)$. Define a new
sequence $(\MP{H}_n)_{n \in \Nb}$ by $\MP{H}_n = \bigcap_{m \leq n}\MP{F}_m$
for each $n \in \Nb$. Then $(\MP{H}_n)_{n \in \Nb}$ is a descending sequence of
closed sets in $\upper(\Ps^\flat(Q))$ that does not stabilize because
$(F_n)_{n \in \Nb}$ does not stabilize and $(\forall n \in \Nb)(\forall \bs x
\in \Pf(Q))(\bs x \in \MP{H}_n \biimp \bs x \in F_n)$. Hence
$\upper(\Ps^\flat(Q))$ is not Noetherian.

The proof of (ii) is the same, except we use
Lemma~\ref{lem-CtblUnctblTranslate}~(ii) in place of
Lemma~\ref{lem-CtblUnctblTranslate}~(i).
\end{proof}

Theorem~\ref{thm-UnctblNoethImpliesCtblNoeth} tells us that in the forward
direction we need only work with the uncountable spaces and that in the
reverse direction we need only work with the countable spaces.

\begin{Theorem}[$\aca$]\label{thm-UnctblFlatInACA}
If $Q$ is a wqo, then $\upper(\Ps^\flat(Q))$ is Noetherian.
\end{Theorem}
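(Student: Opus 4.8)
The plan is to verify condition~(ii) of Proposition~\ref{prop-UnctblNoethEquiv} for the space $\upper(\Ps^\flat(Q))$, working in $\aca$. So, given $h\colon\Nb\to\Pf(\Pf(Q))$ coding an effectively open set $G_h$, I would produce an $N$ such that $\forall X\bl X\in G_h\biimp(\exists n<N)(\exists\bs i\in h(n))(\bs i\subseteq X\da)\br$. For $\bs i\in\Pf(Q)$ write $U_{\bs i}=\{X\subseteq Q:\bs i\subseteq X\da\}$ for the basic open set of $\upper(\Ps^\flat(Q))$ coded by $\bs i$, so that $G_h=\bigcup_{\bs i\in\mathcal A}U_{\bs i}$, where $\mathcal A=\{\bs i\in\Pf(Q):(\exists n)(\bs i\in h(n))\}$ exists as a set by arithmetical comprehension. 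The one structural observation needed is that $\bs i\mapsto U_{\bs i}$ is antitone with respect to $\leq_Q^\flat$: if $\bs i\leq_Q^\flat\bs j$, i.e.\ $\bs i\subseteq\bs j\da$, then every $X$ with $\bs j\subseteq X\da$ satisfies $\bs i\subseteq\bs j\da\subseteq X\da$, so $U_{\bs j}\subseteq U_{\bs i}$. (In fact $U_{\bs j}\subseteq U_{\bs i}$ iff $\bs i\leq_Q^\flat\bs j$, since $\bs j$ is itself a point of $U_{\bs j}$, but only the stated implication is used.)

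It therefore suffices to find a \emph{finite} $\mathcal A_0\subseteq\mathcal A$ with $(\forall\bs j\in\mathcal A)(\exists\bs i\in\mathcal A_0)(\bs i\leq_Q^\flat\bs j)$: antitonicity then gives $\bigcup_{\bs i\in\mathcal A}U_{\bs i}=\bigcup_{\bs i\in\mathcal A_0}U_{\bs i}$, and taking $N$ so large that $\mathcal A_0\subseteq\bigcup_{n<N}h(n)$ yields the desired equality. To produce such an $\mathcal A_0$, I would first invoke Theorem~\ref{thm-FlatWQOinACA} (this is the only step at which $\aca$ is genuinely used) to conclude that $\Pf^\flat(Q)$ is a wqo. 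If $\mathcal A$ is finite, take $\mathcal A_0=\mathcal A$. Otherwise fix the increasing enumeration $(\bs i_k)_{k\in\Nb}$ of $\mathcal A$ and put $\mathcal A_0=\{\bs i_k:(\forall m<k)(\bs i_m\nleq_Q^\flat\bs i_k)\}$, which exists by arithmetical comprehension. A straightforward induction on $k$ shows that every $\bs i_k$ lies $\leq_Q^\flat$-above some element of $\mathcal A_0$ (if $\bs i_k\notin\mathcal A_0$ then $\bs i_m\leq_Q^\flat\bs i_k$ for some $m<k$, and one applies the induction hypothesis to $\bs i_m$ together with transitivity of $\leq_Q^\flat$). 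Moreover $\mathcal A_0$ is finite: if the set of $k$ with $(\forall m<k)(\bs i_m\nleq_Q^\flat\bs i_k)$ were infinite, its increasing enumeration $(k_t)_{t\in\Nb}$ would give a sequence $(\bs i_{k_t})_{t\in\Nb}$ in $\Pf^\flat(Q)$ with no pair $s<t$ satisfying $\bs i_{k_s}\leq_Q^\flat\bs i_{k_t}$ (since $k_s<k_t$ and $\bs i_{k_t}\in\mathcal A_0$), contradicting that $\Pf^\flat(Q)$ is a wqo.

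I do not expect a serious obstacle here: once the antitonicity of $\bs i\mapsto U_{\bs i}$ is noticed, what remains is just the familiar fact that a subset of a wqo has finitely many minimal elements and that every element lies above one, carried out in an effective ``greedy'' form so that it goes through in $\aca$. The use of $\aca$ is essential rather than an artifact, since by Theorem~\ref{thm-UnctblNoethImpliesCtblNoeth} the conclusion already implies that $\upper(\Pf^\flat(Q))$ is Noetherian, one of the statements equivalent to $\aca$. Finally, the same argument should prove Theorem~\ref{thm-UnctblSharpInACA}: replace $U_{\bs i}$ by the basic open set $\{X\subseteq Q:\bs i\cap X\ua=\emptyset\}$ of $\upper(\Ps^\sharp(Q))$ and check that it is still antitone in $\bs i$ with respect to $\leq_Q^\flat$ (if $\bs i\leq_Q^\flat\bs j$, and $p\in X$ with $p\leq_Q q$ for some $q\in\bs i$, then $q\leq_Q q'$ for some $q'\in\bs j$, so $q'\in\bs j\cap X\ua$), so that the finiteness of $\mathcal A_0$ again comes from $\Pf^\flat(Q)$ being a wqo.
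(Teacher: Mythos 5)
Your argument is correct, but it takes a genuinely different route from the paper's. The paper proves the contrapositive: from a non-stabilizing descending sequence $(F_n)_{n\in\Nb}$ of effectively closed sets in $\upper(\Ps^\flat(Q))$ it extracts a bad sequence in $\Pf^\flat(Q)$, the key observation being that $A\notin F$ is always witnessed by some finite $\bs a\subseteq A$ with $\bs a\notin F$; Theorem~\ref{thm-FlatWQOinACA} then yields that $Q$ is not a wqo. You instead verify condition~(ii) of Proposition~\ref{prop-UnctblNoethEquiv} directly: the basic open set coded by $\bs i$ is antitone in $\bs i$ with respect to $\leq_Q^\flat$, and since $\Pf^\flat(Q)$ is a wqo (again by Theorem~\ref{thm-FlatWQOinACA}), the arithmetically defined set $\mathcal A_0$ of greedily minimal indices occurring in $h$ is finite and already covers $G_h$. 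Both proofs funnel through Theorem~\ref{thm-FlatWQOinACA}, so neither is logically more economical, but yours makes the compactness content explicit (every effectively open set is a finite union of basic opens) where the paper exploits the sequential characterization; the auxiliary steps you flag (existence of $\mathcal A$ and $\mathcal A_0$ by arithmetical comprehension, finiteness of $\mathcal A_0$, the strong induction showing every index dominates a member of $\mathcal A_0$) are all routine in $\aca$. A genuine bonus of your route is the closing remark: the basic opens of $\upper(\Ps^\sharp(Q))$ are likewise antitone in their index with respect to $\leq_Q^\flat$, so the identical argument proves Theorem~\ref{thm-UnctblSharpInACA}, for which the paper gives a separate and more delicate proof that builds a bad sequence in $Q$ itself via an arithmetical stabilization argument (the paper notes transfer in the opposite direction, from the $\sharp$-style proof to the $\flat$ case, but not this uniformization).
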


\begin{proof}
We prove the contrapositive. Let $Q$ be a quasi-order, suppose that
$\upper(\Ps^\flat(Q))$ is not Noetherian, and let $(F_n)_{n \in \Nb}$ be a
non-stabilizing descending sequence of effectively closed sets. Our goal is
to build a bad sequence in $\Pf^\flat(Q)$, thereby proving that
$\Pf^\flat(Q)$ is not a wqo and hence, by Theorem~\ref{thm-FlatWQOinACA},
that $Q$ is not a wqo.

\begin{Claim}
If $F$ is an effectively closed set in $\upper(\Ps^\flat(Q))$
and $A \subseteq Q$, then $A \in F$ if and only if $\Pf(A) \subseteq F$.
\end{Claim}

\begin{proof}[Proof of claim]
The forward direction is clear because effectively closed sets are closed
downward under $\leq_Q^\flat$, and $B \leq_Q^\flat A$ whenever $B \subseteq A
$.  For the reverse direction, suppose that $F$ is coded by $h \colon \Nb \imp
\Pf(\Pf(Q))$. Then $A \notin F$ means that $(\exists n \in \Nb)(\exists \bs i
\in h(n))(\bs i \subseteq A\da)$. As the witnessing $\bs i$ is finite, there
is a finite $\bs a \subseteq A$ such that $\bs i \subseteq \bs{a}\da$, and
this $\bs a$ satisfies $\bs a \notin F$.
\end{proof}

It follows from the claim that if $F_n \setminus F_{n+1} \neq \emptyset$ for
some $n \in \Nb$, then there is a finite $\bs{a} \in F_n \setminus F_{n+1}$.
Suppose we have constructed a sequence $(\bs{a}_i)_{i<n}$ of elements of
$\Pf(Q)$ along with an increasing sequence $(m_i)_{i<n}$ such that $(\forall
i < n)(\bs{a}_i \in F_{m_i} \setminus F_{m_i + 1})$. As $(F_n)_{n \in \Nb}$
is non-stabilizing, we may extend the sequence by finding an $m_n > m_{n-1}$
(or an $m_n \geq 0$ if $n=0$) and an $\bs{a}_n \in \Pf(Q)$ that is in
$F_{m_n} \setminus F_{m_n + 1}$. In the end, $ (\bs{a}_n)_{n \in \Nb}$ is a
bad sequence because, for each $n \in \Nb$, $\bs{a}_n \in F_{m_n}$ but
$(\forall i < n)(\bs{a}_i \notin F_{m_n})$, which means that $ (\forall i <
n)(\bs{a}_i \nleq_Q^\flat \bs{a}_n)$.
\end{proof}

\begin{Theorem}[$\aca$]\label{thm-UnctblSharpInACA}
If $Q$ is a wqo, then $\upper(\Ps^\sharp(Q))$ is Noetherian.
\end{Theorem}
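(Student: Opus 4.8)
The plan is to follow the pattern of the proof of Theorem~\ref{thm-UnctblFlatInACA}, but using that $P \coloneqq \Pf^\flat(Q)$ is a wqo (Theorem~\ref{thm-FlatWQOinACA}) and hence $\alex(P)$ is Noetherian (Proposition~\ref{prop-WQOvsTopInRCA}). The key observation will be that, for $Q$ a wqo, the inclusion order on effectively closed sets of $\upper(\Ps^\sharp(Q))$ is governed by the Smyth ($\sharp$) order on subsets of the Hoare quasi-order $P$, so that a non-stabilizing descending chain of closed sets yields a non-stabilizing ascending chain of open sets in $\alex(P)$. Concretely, work in $\aca$, assume $Q$ is a wqo, and let $(F_n)_{n \in \Nb}$ be a descending sequence of effectively closed sets in $\upper(\Ps^\sharp(Q))$; by Proposition~\ref{prop-UnctblNoethEquiv} it suffices to show it stabilizes. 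If $F_n$ is coded by $h_n$, let $S_n = \bigcup_m h_n(m) \subseteq \Pf(Q)$, which exists by $\aca$; unwinding Definition~\ref{def-UnctblUpperSharp} gives $F_n = \{X \subseteq Q : (\forall \bs i \in S_n)(\bs i \cap X\ua \neq \emptyset)\}$. For $S \subseteq \Pf(Q)$ write $F_S = \{X \subseteq Q : (\forall \bs i \in S)(\bs i \cap X\ua \neq \emptyset)\}$.

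The crux is the computation that, for $S, S' \subseteq \Pf(Q)$,
\[ F_{S'} \subseteq F_S \quad \Biimp \quad (\forall \bs i \in S)(\exists \bs i' \in S')(\bs i' \leq_Q^\flat \bs i). \]
For $(\Leftarrow)$: given $X \in F_{S'}$ and $\bs i \in S$, choose $\bs i' \in S'$ with $\bs i' \subseteq \bs i\da$; a witness $a' \in \bs i' \cap X\ua$ provides $x \in X$ with $x \leq_Q a'$ and $a \in \bs i$ with $a' \leq_Q a$, so $a \in \bs i \cap X\ua$. For $(\Rightarrow)$: given $\bs i \in S$, the set $X = Q \setminus (\bs i\da)$ exists (as $\bs i$ is finite) and satisfies $\bs i \cap X\ua = \emptyset$, hence $X \notin F_S \supseteq F_{S'}$; so some $\bs i' \in S'$ has $\bs i' \cap X\ua = \emptyset$, and then every $a' \in \bs i'$ lies in $\bs i\da$ (otherwise $a' \in X$ with $a' \leq_Q a'$), that is, $\bs i' \leq_Q^\flat \bs i$. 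The right-hand side of the equivalence is exactly the assertion that $S' \leq^\sharp S$ for the Smyth ($\sharp$) order induced on subsets of $P = \Pf^\flat(Q)$, interpreted via its defining formula as the paper does for $\Ps^\sharp$.

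Consequently, $(F_n)$ being descending means $S_{n+1} \leq^\sharp S_n$ over $P$ for all $n$, and $F_n = F_{n+1}$ exactly when $S_n, S_{n+1}$ are $\leq^\sharp$-equivalent. Now set $G_n \coloneqq S_n\ua$, the $\leq_Q^\flat$-upward closure of $S_n$ inside $\Pf(Q)$; this is effectively open in $\alex(P)$, uniformly in $n$, since $S = \{(n,\bs i) : \bs i \in S_n\}$ exists and can be enumerated by sections. From $S_{n+1} \leq^\sharp S_n$ we get $S_n \subseteq S_{n+1}\ua$, hence $G_n \subseteq G_{n+1}$ for every $n$; and whenever $F_n \neq F_{n+1}$ the computation gives some $\bs j \in S_{n+1}$ with $\bs j \notin S_n\ua$, so $G_n \subsetneq G_{n+1}$. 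If $(F_n)$ failed to stabilize, then $(G_n)$ would be an ascending sequence of effectively open sets in $\alex(\Pf^\flat(Q))$ with strict increase at infinitely many stages, contradicting that $\alex(\Pf^\flat(Q))$ is Noetherian. Hence $(F_n)$ stabilizes, and $\upper(\Ps^\sharp(Q))$ is Noetherian.

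The main obstacle is the crux computation: identifying the correct test points $X = Q \setminus (\bs i\da)$ and thereby recognizing that the inclusion order on these closed sets is precisely the Smyth order over the Hoare quasi-order $\Pf^\flat(Q)$. Once this is established, the remaining work — forming $S_n$, the relation $\leq_Q^\flat$ on $\Pf(Q)$, and the sequence $(G_n)$ of effectively open sets, and invoking Theorem~\ref{thm-FlatWQOinACA} and Proposition~\ref{prop-WQOvsTopInRCA} — is routine $\aca$-bookkeeping.
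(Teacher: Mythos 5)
Your proof is correct, but it takes a genuinely different route from the paper's. The paper argues by contradiction and builds a bad sequence directly in $Q$: it first shows (using that $Q$ is a wqo) that membership of an arbitrary $A \subseteq Q$ in an effectively closed set of $\upper(\Ps^\sharp(Q))$ is witnessed by a finite subset of $A$, so that equality of effectively closed sets becomes arithmetical, and then runs a recursion that at each step selects, via a pigeonhole argument requiring arithmetical bounding, an element $q_k$ keeping the restricted sequence of closed sets non-stabilizing. You instead characterize inclusion among the effectively closed sets exactly: $F_{S'} \subseteq F_S$ iff $S'$ is below $S$ in the Smyth order induced on subsets of the Hoare quasi-order $\Pf^\flat(Q)$. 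Your crux computation is right (the test points $Q \setminus (\bs i\da)$ exist in $\rca$ since $\bs i$ is finite, and the equivalence even survives the degenerate case $\emptyset \in S$, where $F_S = \emptyset$), and it converts a non-stabilizing descending chain of closed sets into a non-stabilizing ascending chain of effectively open sets in $\alex(\Pf^\flat(Q))$, at which point Theorem~\ref{thm-FlatWQOinACA} and Proposition~\ref{prop-WQOvsTopInRCA} finish the job. What your route buys: it exposes the order-theoretic content of the theorem (the closed-set lattice of $\upper(\Ps^\sharp(Q))$ is governed by $\leq^\sharp$ over $\Pf^\flat(Q)$) and isolates an $\rca$-provable implication ``if $\Pf^\flat(Q)$ is a wqo, then $\upper(\Ps^\sharp(Q))$ is Noetherian,'' with $\aca$ entering only through Theorem~\ref{thm-FlatWQOinACA}. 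What the paper's route buys: it works with $Q$ itself and does not route through the (itself $\aca$-strength) statement that $\Pf^\flat(Q)$ is a wqo. One presentational point: to pass from ``$(F_n)$ does not stabilize'' to ``$(G_n)$ does not stabilize,'' argue the contrapositive directly from your equivalence --- if $G_n = G_N$ for all $n > N$ then $S_n \subseteq S_N\ua$, hence $F_N \subseteq F_n$ and the chain stabilizes --- rather than hunting for infinitely many $m$ with $F_m \neq F_{m+1}$; the latter implicitly chains equalities of closed sets along finite intervals, which is unproblematic only because your equivalence has already rendered $F_m = F_{m+1}$ arithmetical.
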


\begin{proof}
Let $Q$ be a wqo. Suppose for a contradiction that $\upper(\Ps^\sharp(Q))$ is
not Noetherian, and let $(F_n)_{n \in \Nb}$ be a non-stabilizing descending
sequence of effectively closed sets. Our goal is to construct a bad sequence
$(q_n)_{n \in \Nb}$ of elements of $Q$, contradicting that $Q$ is a wqo.

\begin{MClaim}\label{claim-finite}
If $F$ is an effectively closed set in $\upper(\Ps^\sharp(Q))$ and $A
\subseteq Q$, then $A \in F $ if and only if $ (\exists \bs a \in \Pf(A))(\bs
a \in F)$
\end{MClaim}

\begin{proof}[Proof of claim]
The backwards direction is clear because effectively closed sets are closed
downward under $\leq_Q^\sharp$, and $A \leq_Q^\sharp B$ whenever $B
\subseteq A$.

For the forward direction, the fact that $Q$ is a wqo implies that there is a
finite $\bs a \subseteq A$ such that $\bs{a} \leq_Q^\sharp A$, for otherwise
it is easy to construct a bad sequence by choosing elements of $A$ (see
\cite[Lemma~4.8]{M2005}).
\end{proof}

It follows from Claim~\ref{claim-finite} that two effectively closed sets are equal if
and only if they agree on $\Pf(Q)$.  Therefore the equality of two effectively closed
sets is an arithmetical property of the sets, and whether or not a descending
sequence of effectively closed sets stabilizes is an arithmetical property of the
sequence.

Suppose we have constructed a finite bad sequence $(q_i)_{i < k}$ of elements
of $Q$ such that the sequence $(F'_n)_{n \in \Nb}$ given by $F'_n = F_n \cap
\bigcap_{i < k}\{q_i\}\da^\sharp$ for each $n \in \Nb$ does not stabilize.
Search for an $\bs a \in \Pf(Q)$ and an $\ell$ such that $\bs a \in F'_\ell
\setminus F'_{\ell+1}$. As $\bs a \in \bigcap_{i < k}\{q_i\}\da^\sharp$, it
must be that $\bs a \notin F_{\ell+1}$ and hence that $\bs a \notin \{\{r_j\}
: j < m\}\da^\sharp$ for some superset $\{\{r_j\} : j < m\}\da^\sharp$ of
$F_{\ell+1}$. Notice that $(\forall i < k)(\forall j < m)(q_i \nleq_Q r_j)$
because if $q_i \leq_Q r_j$ for some $i < k$ and $j < m$, then $\bs a
\leq_Q^\sharp \{q_i\} \leq_Q^\sharp \{r_j\}$ would contradict $\bs a \notin
\{\{r_j\} : j < m\}\da^\sharp$. Thus we could chose any $r_j$ for $j < m$ to
extend our bad sequence. We need to show that at least one such choice allows
us to continue the construction.

\begin{MClaim}\label{claim-stab}
There is $j < m$ such that the sequence $(F'_n \cap \{r_j\}\da^\sharp)_{n
\in \Nb}$ does not stabilize.
\end{MClaim}

\begin{proof}[Proof of claim]
Suppose for a contradiction that the sequence $(F'_n \cap
\{r_j\}\da^\sharp)_{n \in\Nb}$ stabilizes for each $j < m$. Let $N > \ell+1$
be large enough so that $(\forall j < m)(\forall n > N)(F'_n \cap
\{r_j\}\da^\sharp = F'_N \cap \{r_j\}\da^\sharp)$. Such an $N$ exists because
the stabilization of $(F'_n \cap \{r_j\}\da^\sharp)_{n \in\Nb}$ is an
arithmetical property, and $\aca$ proves the bounding axiom for every
arithmetical formula. For all $n \geq N$, we have that

\begin{align*}
\bigcup_{j < m}(F'_n \cap \{r_j\}\da^\sharp) = F'_n \cap \bigcup_{j < m}\{r_j\}\da^
\sharp = F'_n \cap \{\{r_j\} : j < m\}\da^\sharp = F'_n,
\end{align*}
where the last equality holds because $F'_n \subseteq F'_{\ell + 1} \subseteq
\{\{r_j\} : j < m\}\da^\sharp$, and that
\begin{align*}
\bigcup_{j < m}(F'_n \cap \{r_j\}\da^\sharp) = \bigcup_{j < m}(F'_N \cap \{r_j\}\da^
\sharp) = F'_N.
\end{align*}

Thus $(\forall n > N)(F'_n = F'_N)$, contradicting that the sequence
$(F'_n)_{n \in\Nb}$ does not stabilize.
\end{proof}
Let $q_k$ be $r_j$ for the $r_j$ guaranteed by Claim~\ref{claim-stab}. Again,
the procedure for computing $q_k$ is arithmetical because the stabilization
of a sequence is an arithmetical property. Then $ (q_i)_{i < k+1}$ is a bad
sequence and the sequence $(F_n \cap \bigcap_{i < k+1}\{q_i\}\da^\sharp)_{n
\in \Nb}$ does not stabilize, so we may continue the construction and build a
contradictory infinite bad sequence.
\end{proof}

\begin{Corollary}[$\aca$]\label{thm-SharpInACA}
If $Q$ is a wqo, then the countable second-countable space
$\upper(\Pf^\sharp(Q))$ is Noetherian.
\end{Corollary}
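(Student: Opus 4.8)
The plan is to derive this immediately from two results already established in the excerpt. First I would invoke Theorem~\ref{thm-UnctblSharpInACA}, which says (in $\aca$) that if $Q$ is a wqo then the uncountable second-countable space $\upper(\Ps^\sharp(Q))$ is Noetherian. Then I would apply Theorem~\ref{thm-UnctblNoethImpliesCtblNoeth}~(ii), which is proved in $\rca$ and states that whenever $\upper(\Ps^\sharp(Q))$ is Noetherian, so is the countable second-countable space $\upper(\Pf^\sharp(Q))$. Chaining these two facts gives the corollary.

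In more detail: let $Q$ be a wqo. Working in $\aca$, Theorem~\ref{thm-UnctblSharpInACA} yields that $\upper(\Ps^\sharp(Q))$ is Noetherian. Feeding this into Theorem~\ref{thm-UnctblNoethImpliesCtblNoeth}~(ii) (which needs only $\rca$) gives that $\upper(\Pf^\sharp(Q))$ is Noetherian, as required. Recall that the translation in Lemma~\ref{lem-CtblUnctblTranslate}~(ii) underlying Theorem~\ref{thm-UnctblNoethImpliesCtblNoeth}~(ii) exploits the fact that the upper topology on $\Pf^\sharp(Q)$ genuinely agrees with the subspace topology that $\Pf(Q)$ inherits from $\upper(\Ps^\sharp(Q))$, so no additional care is needed in the $\sharp$ case.

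Since both ingredients are already available, there is essentially no obstacle here; the only thing to double-check is that the bookkeeping about which system each step uses is consistent, namely that the whole argument lives in $\aca$ because Theorem~\ref{thm-UnctblSharpInACA} does and the second step is provable in the weaker $\rca$. This is precisely the reason the direct proof was deferred: a self-contained argument would otherwise duplicate the construction of the bad sequence carried out in the proof of Theorem~\ref{thm-UnctblSharpInACA}.
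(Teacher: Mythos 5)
Your proposal is correct and is exactly the paper's own proof: the corollary is obtained by chaining Theorem~\ref{thm-UnctblSharpInACA} with Theorem~\ref{thm-UnctblNoethImpliesCtblNoeth}~(ii), and the bookkeeping about which system each step uses is as you describe.
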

\begin{proof}
Immediate from Theorem~\ref{thm-UnctblNoethImpliesCtblNoeth}
and Theorem~\ref{thm-UnctblSharpInACA}.
\end{proof}

A similar corollary can be obtained from
Theorem~\ref{thm-UnctblNoethImpliesCtblNoeth} and
Theorem~\ref{thm-UnctblFlatInACA},
providing a new proof that $\aca$ proves that if $Q$ is a wqo, then the
countable second-countable space $\upper(\Pf^\flat(Q))$ is Noetherian (which
we already saw in Theorem~\ref{thm-FlatInACA}).

Notice also that one could omit the application of
Theorem~\ref{thm-UnctblNoethImpliesCtblNoeth} and prove directly, in $\aca$,
that if $Q$ is a wqo, then $\upper(\Pf^\flat(Q))$ (respectively
$\upper(\Pf^\sharp(Q))$) is Noetherian by implementing the proof of
Theorem~\ref{thm-UnctblFlatInACA} (respectively
Theorem~\ref{thm-UnctblSharpInACA}) in the countable second-countable spaces
setting. It is also possible to give a direct proof of
Theorem~\ref{thm-UnctblFlatInACA} in which one
builds a bad sequence in $Q$ instead of in $\Pf^\flat(Q)$ in the style of the proof 
of
Theorem~\ref{thm-UnctblSharpInACA}. Finally, recall that
Proposition~\ref{prop-FlatSecondCtblACA} shows, essentially, that without
$\aca$ the definition of $\upper(\Ps^\flat(Q))$ as a second-countable space
(Definition~\ref{def-UnctblUpperFlat}) codes a coarser topology than the
upper topology on $\Ps^\flat(Q)$. Nevertheless, we may still give an \emph{ad
hoc} definition of the upper topology on $\Ps^\flat(Q)$ in $\rca$ by
interpreting a sequence $((E^n_i)_{i < m_n})_{n \in \Nb}$ of finite sequences
of subsets of $Q$ as a code for the closed set $\bigcap_{n \in \Nb}\{E^n_i :
i < m_n\}\da^\flat$. Then, by a proof in the style of that of
Theorem~\ref{thm-UnctblSharpInACA}, $\aca$ proves that if $Q$ is a wqo, then
this topology is Noetherian.

\section{The reversals}\label{sec-reversals}

The strategy for reversing, for example, the statement ``if $Q$ is a wqo,
then $\upper(\Pf^\flat(Q))$ is Noetherian'' to $\aca$ is to produce a
recursive quasi-order $Q$ such that $\upper(\Pf^\flat(Q))$ is not Noetherian
as witnessed by some uniformly r.e.\ descending sequence of closed sets, yet
every bad sequence from $Q$ computes $0'$.
In \cites{MarSho11,FriMar12,FriMar14} the main reversals to $\aca$ are based
on the construction of a recursive linear order of type $\omega+\omega^*$ with 
the 
property that every descending sequence computes $0'$ (we used this technique 
in the proof of Proposition \ref{prop-FlatSecondCtblACA}). We generalize this 
construction to partial orders.  Given a finite partial order $P$ and an $x \in P$, we 
define a recursive partial order $Q = \Xi(P,x)$ with the property that every bad
sequence from $Q$ computes $0'$. The special case $P = \{x\}$ produces a
recursive linear order $\Xi(\{x\},x)$ of type $\omega+\omega^*$ in which
every descending sequence computes $0'$. As in the reversals using linear
orders of type $\omega+\omega^*$, the notion of \emph{true stage} is crucial.

\begin{Definition}[$\rca$]\label{def-true}
Let $f \colon \Nb \imp \Nb$ be an injection. An $n \in \Nb$ is
$f$-\emph{true} (or simply \emph{true}) if $(\forall k > n)(f(n) < f(k))$.
An $n \in \Nb$ is $f$-\emph{true} (or simply \emph{true}) \emph{at stage} $s
\in \Nb$ if $n < s$ and $\forall k(n < k \leq s \imp f(n) < f(k))$.
\end{Definition}
The notion of true stages is not new. Dekker \cite{Dek54} introduced this
notion (but he used the term `minimal') to show that every non-recursive
r.e.\ degree contains a hypersimple set.  Indeed, given a recursive
enumeration of a non-recursive r.e.\ set $A$, the set of non-true stages is
hypersimple and Turing equivalent to $A$ (see also
\cite[Theorem~XVI]{Rog87}).  An early use of true stages in reverse
mathematics is in \cite[Section 1]{Shore93}.  In recursion theory, true
stages are also known as non-deficiency stages (see~\cite{Soa87}).

The import of this definition is that the range of an injection $f \colon \Nb
\imp \Nb$ is $\Delta^0_1$ in the join of $f$ and any infinite set $T$ of
$f$-true stages: indeed for any $n \in \Nb$, $\exists m (f(m) = n)$ if and
only if $(\forall m \in T)(f(m) > n \imp (\exists k < m)(f(k) = n))$. Thus
$\rca$ proves that, for any injection $f$, if there is an infinite set of
$f$-true stages, then the range of $f$ exists.

For the purposes of the following definition, given an injection $f\colon\Nb\to\Nb$,
set
\begin{align*}
T_s = \{n < s : \text{$n$ is $f$-true at stage $s$}\},
\end{align*}
and note that $\rca$ proves that the sequence $(T_s)_{s \in \Nb}$ exists.

\begin{Definition}[$\rca$]\label{def-Xi}
Let $f \colon \Nb \imp \Nb$ be an injection, let $P$ be a finite partial
order, and let $x \in P$. We define the partial order
$Q = \Xi_f(P,x)$ as follows.  Make $\Nb$ disjoint copies of $P$ by letting
$P_n = \{n\} \times P$ for each $n \in \Nb$, and let $x_n = (n,x)$ denote the
copy of $x$ in $P_n$.  The domain of $Q$ is $\bigcup_{n \in \Nb}P_n$.  Define $
\leq_Q$ in stages, where at stage $s$, $\leq_Q$ is defined on
$\bigcup_{n \leq s}P_n$.

\begin{itemize}
\item At stage $0$, $\leq_Q$ is simply $\leq_{P_0}$ on $P_0$.
\item Suppose $\leq_Q$ is defined on $\bigcup_{n \leq s} P_s$. There are
    two cases.
\begin{enumerate}[(i)]
\item If $T_{s+1} \subsetneqq T_s \cup \{s\}$, let $n_0$ be the least
    element of $(T_s \cup \{s\}) \setminus T_{s+1}$, and place $P_{s+1}$
    immediately above $x_{n_0}$. That is, place the elements of $P_{s+1}$
    above all $y \in \bigcup_{n \leq s} P_s$ such that $y \leq_Q
    x_{n_0}$, below all $y \in \bigcup_{n \leq s} P_s$ such that $y >_Q
    x_{n_0}$, and incomparable with all $y \in \bigcup_{n \leq s} P_s$
    that are incomparable with $x_{n_0}$.

\item If $T_{s+1} = T_s \cup \{s\}$, place $P_{s+1}$ immediately below
    $x_s$. That is, place the elements of $P_{s+1}$ above all $y \in
    \bigcup_{n \leq s} P_s$ such that $y <_Q x_s$, below all $y \in
    \bigcup_{n \leq s} P_s$ such that $y \geq_Q x_s$, and incomparable
    with all $y \in \bigcup_{n \leq s} P_s$ that are incomparable with
    $x_s$.
\end{enumerate}
In both cases, define $\leq_Q$ to be $\leq_{P_{s+1}}$ on $P_{s+1}$.
\end{itemize}
\end{Definition}

We could extend the construction of Definition~\ref{def-Xi} by starting from
any sequence $(P_n)_{n \in \Nb}$ of finite (or even infinite) quasi-orders
and any choice of elements $x_n \in P_n$ for each $n$, but we have no need
for such generality. We just note that if each $P_n$ is allowed to be
infinite, then Lemma~\ref{lem-Qproperties} is still provable in $\rca$, but
Lemma~\ref{lem-ACAreversal} holds only if each $P_n$ is a wqo, and its proof
requires the infinite pigeonhole principle for an arbitrary number of colors
(i.e., $\forall k \rt^1_k $, which is equivalent to $\mathsf{B} \Sigma^0_2$
over $\rca$~\cite{HirstThesis}).

For the purposes of the next lemmas, $P_m \leq_Q x_n$ means $(\forall z \in
P_m)(z \leq_Q x_n)$, $x_n \leq_Q P_m$ means $(\forall z \in P_m)(x_n \leq_Q
z)$, and $P_m \mid_Q y$ means $(\forall z \in P_m)(z \mid_Q y)$.

\begin{Lemma}[$\rca$]\label{lem-Qproperties}
Let $f \colon \Nb \imp \Nb$, $P$ be a finite partial order, $x \in P$, and
$Q=\Xi_f (P,x)$, and consider $m, n \in \Nb$ with $n < m$.
\begin{enumerate}[(i)]
\item If $n \in T_m$, then $P_m \leq_Q x_n$ and $(\forall y \in P_n)(x_n
    \mid_Q y \imp P_m \mid_Q y)$.
\item If $n \notin T_m$, then $x_n \leq_Q P_m$.
\end{enumerate}
\end{Lemma}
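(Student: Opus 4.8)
The plan is to argue by induction on $m > n$, tracking how the construction places each new block $P_m$. The two items should really be proved simultaneously, since the inductive step for one may invoke the other at smaller stages. At stage $m$ (i.e., going from $\bigcup_{k \le m-1} P_k$ to $\bigcup_{k \le m} P_k$), the block $P_m$ is inserted in one of two ways depending on whether $T_m \subsetneqq T_{m-1} \cup \{m-1\}$ or $T_m = T_{m-1} \cup \{m-1\}$. In case~(i) of the construction, $P_m$ goes immediately above $x_{n_0}$ where $n_0 = \min\bigl((T_{m-1} \cup \{m-1\}) \setminus T_m\bigr)$; in case~(ii), $P_m$ goes immediately below $x_{m-1}$. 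The key combinatorial fact I would isolate first is the behavior of true stages: once $n$ leaves the true-stage set it never returns (if $n \notin T_s$ then $n \notin T_t$ for all $t \ge s$), and if $n \in T_s$ then $n \in T_t$ for all $n < t \le s$. This monotonicity is what makes $n_0$ well-defined and lets the induction go through cleanly.

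First I would handle the base-type step, $m = n+1$. If $n \in T_{n+1}$, then the construction cannot have placed $P_{n+1}$ in case~(ii) with $x_{n_0} = x_n$ removed — actually here $T_{n+1} = T_n \cup \{n\}$ forces case~(ii), so $P_{n+1}$ is placed immediately below $x_n$, giving $P_{n+1} \le_Q x_n$, and the incomparability clause of the construction gives exactly $x_n \mid_Q y \imp P_{n+1} \mid_Q y$ for $y \in P_n$ (here "$y \in P_n$" with $y$ incomparable to $x_n$: since $P_{n+1}$ is placed with the same comparability relation to old elements as $x_n$ has, incomparability transfers). If $n \notin T_{n+1}$, then $n \in (T_n \cup \{n\}) \setminus T_{n+1}$, so $n_0 \le n$; if $n_0 = n$ then $P_{n+1}$ is placed immediately above $x_n$, so $x_n \le_Q P_{n+1}$, which is exactly~(ii); if $n_0 < n$, I need to see that placing $P_{n+1}$ above $x_{n_0}$ still puts it above $x_n$ — this follows because $x_{n_0}$ must already lie below $x_n$ (or be incomparable, but one checks that when $n_0 \in T_n$ and we are looking at the element $x_n$ of the most recently added block, the relation $x_{n_0} <_Q x_n$ holds by an auxiliary claim about the linear-order-like skeleton on the $x_n$'s).

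For the inductive step $m > n+1$, assume the lemma holds for all pairs with larger lower index or smaller span. Examine how $P_m$ was placed at stage $m$. In case~(ii), $P_m$ sits immediately below $x_{m-1}$; by the induction hypothesis applied to the pair $(n, m-1)$ we already know the relationship between $P_{m-1}$ (in particular $x_{m-1}$) and $x_n$, and "immediately below $x_{m-1}$" propagates it: if $n \in T_{m-1}$ then since $T_m = T_{m-1}\cup\{m-1\} \supseteq T_{m-1} \ni n$ we have $n \in T_m$, and $P_m \le_Q x_{m-1} \le_Q x_n$ gives $P_m \le_Q x_n$, with incomparability transferring as before; the case $n \notin T_{m-1}$ gives $x_n \le_Q P_{m-1}$, and since $x_n \ne x_{m-1}$ and $x_n \le_Q x_{m-1}$ fails (it would put $x_n$ below, contradiction), one deduces $x_n \le_Q z$ for the new $z \in P_m$. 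In case~(i), $P_m$ is placed immediately above $x_{n_0}$ with $n_0$ the least element leaving the true set; the crucial sub-cases are $n < n_0$, $n = n_0$, and $n > n_0$ (with $n < m$). When $n = n_0$: then $n \notin T_m$, and $x_n \le_Q P_m$ directly, matching~(ii). When $n \ne n_0$ but $n \in T_m$: then $n$ is still true at stage $m$, hence also at stage $m-1$ wherever relevant, and $n_0 \ne n$ forces (by minimality and monotonicity of true stages) that $x_{n_0} <_Q x_n$ or $x_{n_0} \mid_Q x_n$ in a controlled way; placing $P_m$ immediately above $x_{n_0}$ then yields $P_m \le_Q x_n$ via transitivity together with the induction hypothesis for $(n, n_0)$ if $n < n_0$, or a direct structural argument if $n_0 < n$. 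The incomparability clause again transfers by the explicit "incomparable with all $y$ that are incomparable with $x_{n_0}$" stipulation combined with the induction hypothesis.

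The main obstacle I anticipate is bookkeeping the comparabilities among the distinguished elements $x_n$ themselves — essentially one wants an auxiliary claim that the $x_n$'s, together with the true-stage data, form a predictable skeleton (roughly: $x_n$ versus $x_{n'}$ is governed by whether the later block was inserted above or below, which in turn is governed by the true/non-true status), and that every block $P_m$ is comparable-or-incomparable to an old element $y$ exactly as its "anchor" $x_{n_0}$ or $x_{m-1}$ is. Once that skeleton claim is in hand (provable in $\rca$ by $\Sigma^0_1$ induction on $m$, since the construction is effective and all the relevant predicates are bounded), Lemma~\ref{lem-Qproperties} follows by reading off the placement rule in each case. I would therefore structure the write-up as: (1) monotonicity of $T_s$; (2) the skeleton claim about the $x_n$'s; (3) the two-item induction above, citing (1) and (2) at each branch.
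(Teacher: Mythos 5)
Your overall architecture---a simultaneous induction on $m$ for both items, a case split mirroring the two placement rules, and the monotonicity facts about the sets $T_s$---is exactly the paper's. But the one place where you make your ``skeleton claim'' explicit, it is stated backwards, and the step built on it would fail. In the subcase $n \notin T_m$ with $n_0 < n$ (your base-type step; the same configuration recurs in the inductive step) you assert that $n_0 \in T_n$ forces $x_{n_0} <_Q x_n$, and that this is why $P_m$, placed immediately above $x_{n_0}$, ends up above $x_n$. Both halves are wrong. Since $n_0 \in T_n$ and $n_0 < n$, the induction hypothesis for item~(i) applied to the pair $(n_0,n)$ gives $P_n \leq_Q x_{n_0}$, hence $x_n \leq_Q x_{n_0}$: the distinguished elements of the true stages form a \emph{descending} chain, not an ascending one. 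Worse, if your claimed relation $x_{n_0} <_Q x_n$ did hold, then ``immediately above $x_{n_0}$'' (which places $P_m$ below every $y >_Q x_{n_0}$) would put $P_m$ \emph{below} $x_n$, i.e., you would get $P_m \leq_Q x_n$, the opposite of the conclusion $x_n \leq_Q P_m$ that item~(ii) demands. The fix is local---replace the false auxiliary claim by the reversed inequality, which is literally the induction hypothesis---but any free-standing ``skeleton lemma'' proved with your orientation would be false, and no separate skeleton claim is needed at all: the lemma's own statement, applied inductively to the pairs $(n,n_0)$ and $(n_0,n)$, already supplies every comparison among the $x_k$'s that the placement rules require.

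Two smaller points. In construction-case~(i) with $n \in T_m$ you propose to dispose of the subcase $n_0 < n$ by ``a direct structural argument''; that subcase is in fact vacuous, and seeing this needs arithmetic on $f$-values rather than structure: the new stage is the least witness that $n_0$ leaves the true set, so $f(\text{new stage}) < f(n_0)$, while $n \in T_m$ gives $f(n) < f(\text{new stage})$, and $n_0$ true at the previous stage with $n_0 < n$ gives $f(n_0) < f(n)$---a cycle. Without noticing this you are left trying to prove $P_m \leq_Q x_n$ from hypotheses that never occur. Finally, in the inductive step, case~(ii) with $n \notin T_{m-1}$, you write that ``$x_n \leq_Q x_{m-1}$ fails,'' which contradicts what you derived one clause earlier (from $x_n \leq_Q P_{m-1}$); what you actually need there is strictness, $x_n <_Q x_{m-1}$, which follows from antisymmetry because $n \neq m-1$.
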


\begin{proof}
We simultaneously prove (i) and (ii) by $\Sigma^0_0$ induction on $m$. The
case $m=0$ is vacuously true.

Consider $m+1$.  First suppose that $T_{m+1} \subsetneqq T_m \cup \{m\}$ and
thus that $P_{m+1}$ is placed immediately above $x_{n_0}$, where $n_0$ is the
least element of $(T_m \cup \{m\}) \setminus T_{m+1}$.  Now, either $n_0=m$ or 
$n_0 \in T_m \setminus T_{m+1}$, and in both cases it must be that $f(m +1) < 
f(n_0)$ and $(\forall k \in (n_0,m])(f(n_0) < f(k))$.  Notice that the interval $(n_0,m]$ 
is empty when $n_0=m$.

For item~(i), suppose that $n<m+1$ is such that $n \in T_{m+1}$. First we
claim that $n < n_0$. As $n \in T_{m+1}$ and $n_0 \notin T_{m+1}$ we have $n
\neq n_0$. Now, if $n_0 < n$, then either $f(n_0) < f(n)$, in which case
$f(m+1) < f(n_0) < f(n)$, contradicting $n \in T_{m+1}$, or $f(n) < f(n_0)$,
contradicting that $n_0 \notin T_{m+1}$ is only witnessed by $m+1 \neq n$.
Hence $n < n_0$ as claimed. This implies that $n \in T_{n_0}$ because $n \in
T_{m+1}$ and $n_0 < m+1$. By the induction hypothesis, $P_{n_0} \leq_Q x_n$
and $(\forall y \in P_n)(x_n \mid_Q y \imp P_{n_0} \mid_Q y)$. Thus $x_{n_0}
\leq_Q x_n$, so $P_{m+1} \leq_Q x_n$ because $P_{m+1}$ is placed 
immediately
above $x_{n_0}$. Furthermore, every $y \in P_n$ that is incomparable with
$x_n$ is incomparable with $x_{n_0}$ and is hence incomparable with every
element of $P_{m+1}$.

For item~(ii), suppose that $n<m+1$ is such that $n \notin T_{m+1}$. If $n =
n_0$, then $P_{m+1}$ is placed immediately above $x_{n_0} = x_n$, as desired.
Suppose $n_0 < n$. Then $n_0 \in T_n$ because $n < m+1$. By the induction
hypothesis, $P_n \leq_Q x_{n_0}$, so $x_n \leq_Q x_{n_0}$. $P_{m+1}$ is
placed immediately above $x_{n_0}$, so $x_n \leq P_{m+1}$. If instead $n <
n_0$, we claim that $n \notin T_{n_0}$. This is clear if $f(n_0) < f(n)$, so
suppose that $f(n) < f(n_0)$. As $n \notin T_{m+1}$, there is a least $k \in
(n,m+1]$ such that $f(k) < f(n)$. If $k = m+1$, then $n \in (T_m \cup \{m\})
\setminus T_{m+1}$, contradicting that $n_0$ was the least such number. If $k
\in (n_0, m]$, then $f(k) < f(n) < f(n_0)$, contradicting that only $m+1$
witnesses that $n_0 \notin T_{m+1}$. Thus $k \in (n, n_0]$, which means that
$k$ witnesses that $n \notin T_{n_0}$, establishing the claim. By the
induction hypothesis, $x_n \leq_Q P_{n_0}$, so $x_n \leq_Q x_{n_0}$.
$P_{m+1}$ is placed immediately above $x_{n_0}$, so $x_n \leq_Q P_{m+1}$.
This concludes the proof of (i)~and~(ii) for $m+1$ in the $T_{m+1}
\subsetneqq T_m \cup \{m\}$ case.

Now suppose that $T_{m+1} = T_m \cup \{m\}$, so that $P_{m+1}$ is placed
immediately below $x_m$. For item~(i), suppose that $n \in T_{m+1}$. If $n =
m$, then $P_{m+1} \leq_Q x_n = x_m$, and every $y \in P_n = P_m$ that is
incomparable with $x_n = x_m$ is incomparable with every element of
$P_{m+1}$. If $n < m$, then $n \in T_m$, so by the induction hypothesis $P_m
\leq_Q x_n$ and $(\forall y \in P_n)(x_n \mid_Q y \imp P_m \mid_Q y)$. Thus
$x_m \leq_Q x_n$, and so $P_{m+1} \leq_Q x_n$ because $P_{m+1}$ is placed
immediately below $x_m$. Furthermore, every $y \in P_n$ such that $x_n \mid_Q
y$ is incomparable with $x_m$ and is hence incomparable with every element of
$P_{m+1}$.

For item~(ii), suppose that $n \notin T_{m+1} = T_m \cup \{m\}$. Then $n < m$
and $n \notin T_m$, so, by the induction hypothesis, $x_n \leq_Q P_m$. Thus
$x_n \leq_Q x_m$. So $x_n \leq_Q P_{m+1}$ because $P_{m+1}$ is placed
immediately below $x_m$. This concludes the proof.
\end{proof}

\begin{Lemma}[$\rca$]\label{lem-ACAreversal}
Let $f \colon \Nb \imp \Nb$, $P$, $x \in P$, and $Q = \Xi_f(P,x)$ be as
above. If $Q$ is not a wqo, then the range of $f$ exists.
\end{Lemma}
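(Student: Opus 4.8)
The plan is to extract from a bad sequence in $Q$ an infinite set of $f$-true stages; as observed in the paragraph following Definition~\ref{def-true}, $\rca$ then proves that the range of $f$ exists.

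So assume $Q$ is not a wqo and fix a bad sequence $(q_k)_{k \in \Nb}$ in $Q$, say $q_k = (n_k,p_k)$ with $n_k \in \Nb$ and $p_k \in P$. Since $P$ is a finite partial order it is in particular a wqo, so no single copy $P_n$ can contain infinitely many of the $q_k$: such a sub-collection, read off in order, would be an infinite bad sequence inside the finite poset $P$. Hence $\{n_k : k \in \Nb\}$ is infinite, and by first discarding each $q_k$ whose level $n_k$ already occurred earlier and then thinning once more we may assume $n_0 < n_1 < n_2 < \cdots$; each thinning is $\Delta^0_1$ in the original data, and a subsequence of a bad sequence is bad. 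With the levels strictly increasing, the set $N = \{n_k : k \in \Nb\}$ exists in $\rca$.

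For $n \geq 1$ let $\ell(n) < n$ be the level to which $P_n$ is attached in Definition~\ref{def-Xi} (so $\ell(n) = n_0$ in case~(i) and $\ell(n) = n-1$ in case~(ii)), and let $\mathrm{chain}(n) = \{n, \ell(n), \ell(\ell(n)), \dots\}$ be the resulting finite $\ell$-chain down to $0$; both $\ell$ and $n \mapsto \mathrm{chain}(n)$ are computable from $f$. A $\Sigma^0_0$-induction through the placement clauses of Definition~\ref{def-Xi} shows that if $n_i \notin \mathrm{chain}(n_j)$ then every element of $P_{n_i}$ bears the same $\leq_Q$-relation to every element of $P_{n_j}$ as $x_{n_i}$ does; in particular $q_i \leq_Q q_j \biimp x_{n_i} \leq_Q q_j$ in that case. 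Now recall that $T$-membership is monotone downward in the stage (once removed from $T$, never restored). If $n_i \notin T_{n_j}$ for some $i < j$, then Lemma~\ref{lem-Qproperties}(ii), applied with $m = n_j > n_i$, gives $x_{n_i} \leq_Q P_{n_j}$ and hence $x_{n_i} \leq_Q q_j$; if moreover $n_i \notin \mathrm{chain}(n_j)$ we would then get $q_i \leq_Q q_j$, contradicting badness. Thus for all $i < j$ we have $n_i \in T_{n_j} \orr n_i \in \mathrm{chain}(n_j)$, and consequently any $n_i$ with $n_i \notin \mathrm{chain}(n_j)$ for infinitely many $j$ satisfies $n_i \in T_{n_j}$ for all large $j$, hence is $f$-true.

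The crux of the argument — and the main obstacle — is to turn this dichotomy into an infinite, genuinely existent set $T_0$ of $f$-true stages. One cannot simply take $N$: it can happen that every $n_k$ fails to be $f$-true, the relevant true stages appearing instead inside the finite chains $\mathrm{chain}(n_j)$ (already for $Q = \Xi_f(\{x,y\},x)$ with $x <_P y$ the sequence of $y$-copies over the even levels is bad while the odd levels carry the true stages). The set $T_0$ is instead extracted by a careful combinatorial analysis of how the chains $\mathrm{chain}(n_j)$ grow: each $\ell$-step lands on one of at most two candidate levels, the chains cohere into a tree on the levels, and by tracking the portion of $\mathrm{chain}(n_j)$ between consecutive $n_i, n_{i+1}$ one locates a $\Delta^0_1$-in-$(f,(q_k)_k)$ infinite set of levels each forced — by the displayed dichotomy together with the attachment rules — to remain in $T_m$ for all large $m$, hence to be $f$-true. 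This is precisely where the special shape of Definition~\ref{def-Xi}, namely that $P_n$ is attached to the single point $x_{\ell(n)}$, is used. Finally, given such a $T_0$, the restriction of $f$ to the true stages is strictly increasing with unbounded range, so for every $n$
\[
n \in \ran(f) \quad\biimp\quad (\forall m \in T_0)\bl f(m) > n \imp (\exists k < m)(f(k) = n) \br,
\]
which, together with the $\Sigma^0_1$ definition $(\exists k)(f(k)=n)$, shows that $\ran(f)$ has a $\Delta^0_1$ definition and hence exists.
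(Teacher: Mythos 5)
Your overall strategy---extract from a bad sequence an infinite set of $f$-true stages and then appeal to the remark after Definition~\ref{def-true}---is viable in principle, and your preliminary reductions (infinitely many levels occur since $P$ is finite, thinning to strictly increasing levels, and the dichotomy that for $i<j$ either $n_i \in T_{n_j}$ or $n_i \in \mathrm{chain}(n_j)$) are essentially correct, modulo the induction you defer for the ``same relation as $x_{n_i}$'' claim. But the proof has a genuine gap exactly where you flag ``the crux of the argument'': the passage from the dichotomy to an infinite, actually existing set $T_0$ of true stages is asserted, not carried out. As you yourself observe, the dichotomy is compatible with every $n_i$ lying on $\mathrm{chain}(n_j)$ for all large $j$, in which case no $n_i$ need be true, so the true stages must be dug out of the finite chains; the sentence about ``tracking the portion of $\mathrm{chain}(n_j)$ between consecutive $n_i, n_{i+1}$'' is a description of the missing argument, not the argument. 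Until that extraction is written down and verified to be $\Delta^0_1$ in $f$ and the bad sequence, the lemma is not proved.

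The paper's proof sidesteps all of this by characterizing the \emph{entire} set of true stages instead of hunting for an infinite subset of it: $n$ is $f$-true if and only if $\exists i\,(q_i \leq_Q x_n)$. For the forward direction, the $q_i$ are pairwise distinct and each $P_m$ is finite of fixed size, so some $q_i$ lies in some $P_m$ with $m>n$; truth of $n$ gives $n \in T_m$, and Lemma~\ref{lem-Qproperties}~(i) gives $P_m \leq_Q x_n$, hence $q_i \leq_Q x_n$. For the reverse direction, if $n$ is not true then $n \notin T_m$ for all sufficiently large $m$; choosing such an $m$ with $P_m \ni q_j$ for some $j>i$, Lemma~\ref{lem-Qproperties}~(ii) gives $x_n \leq_Q P_m$, hence $q_i \leq_Q x_n \leq_Q q_j$, contradicting badness. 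This equips the set of true stages with a $\Sigma^0_1$ definition to pair with its $\Pi^0_1$ definition from Definition~\ref{def-true}, so it exists by $\Delta^0_1$ comprehension and the range of $f$ follows as in your last step. No thinning, no chains, and no extraction of a subsequence are needed; I recommend replacing your chain analysis with this two-line equivalence.
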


\begin{proof}
Suppose that $Q$ is not a wqo, and let $(q_i)_{i \in \Nb}$ be a bad sequence.
We show that $n \in \Nb$ is true if and only if $\exists i (q_i
\leq_Q x_n)$. Thus the set of true stages has both a $\Pi^0_1$ definition (as
in Definition~\ref{def-true}) and a $\Sigma^0_1$ definition, so it exists by
$\Delta^0_1$ comprehension. It follows that the range of $f$ exists as
explained following Definition~\ref{def-true}.

Suppose that $n \in \Nb$ is true. The sequence $(q_i)_{i \in \Nb}$ is
injective and each $P_m$ is of the same finite size, so there must be an $i$
and an $m$ in $\Nb$ with $m>n$ such that $q_i \in P_m$.  As $n$ is a true
stage, $n \in T_m$, so $P_m \leq_Q x_n$ by Lemma~\ref{lem-Qproperties}~(i).
Thus $q_i \leq_Q x_n$ as desired. Conversely, suppose that $n \in \Nb$ is not
true and suppose for a contradiction that $q_i \leq_Q x_n$ for some $i \in
\Nb$. As $n$ is not a true stage, there is some $k > n$ such that $f(k) <
f(n)$, and therefore $n \notin T_m$ for all $m \geq k$. Let $m > k$ be such that
$P_m$ contains $q_j$ for some $j > i$. Then $x_n \leq_Q P_m$ by
Lemma~\ref{lem-Qproperties}~(ii), so we have that $q_i \leq_Q x_n \leq_Q
q_j$, contradicting that $(q_i)_{i \in \Nb}$ is a bad sequence.
\end{proof}

We now present our main reversals.

\begin{Theorem}\label{thm-FlatReversal}
The statement ``if $Q$ is a wqo, then $\upper(\Pf^\flat(Q))$ is Noetherian''
implies $\aca$ over $\rca$.
\end{Theorem}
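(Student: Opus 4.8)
The plan is to use Lemma~\ref{lem-ACAinjection}: assuming the stated implication, we work in $\rca$ and show that every injection $f\colon\Nb\to\Nb$ has a range. Fix such an $f$. Following the strategy described before Definition~\ref{def-Xi}, attach to $f$ the recursive partial order $Q=\Xi_f(P,x)$ for a suitable finite partial order $P$ and a suitable $x\in P$, and exploit its two basic features. First, by Lemma~\ref{lem-ACAreversal}, if $Q$ is not a wqo then the range of $f$ already exists; so it remains to handle the case that $Q$ \emph{is} a wqo, in which case the hypothesis tells us that $\upper(\Pf^\flat(Q))$ is Noetherian. Second, the placement of the copies $P_m$ inside $Q$ is governed by the true-stage combinatorics of Lemma~\ref{lem-Qproperties}. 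The remaining task is therefore to construct, uniformly recursively in $f$ (so without using the hypothesis), a descending sequence $(F_n)_{n\in\Nb}$ of effectively closed sets in the countable second-countable space $\upper(\Pf^\flat(Q))$ with the property, provable in $\rca$, that \emph{if $(F_n)$ stabilizes then the range of $f$ exists}. Granting this we are done in the remaining case: since $\upper(\Pf^\flat(Q))$ is Noetherian, Proposition~\ref{prop-NoethEquiv} gives that $(F_n)$ stabilizes, and hence the range of $f$ exists; combining this with Lemma~\ref{lem-ACAreversal} for the other case, Lemma~\ref{lem-ACAinjection} yields $\aca$.

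The choice of $P$ is the first delicate point, and it must be nontrivial. If $P=\{x\}$, then $Q=\Xi_f(\{x\},x)$ is a linear order, every finite $\bs a\in\Pf(Q)$ is $\leq_Q^\flat$-equivalent to the singleton of its largest element, and so $\Pf^\flat(Q)$ is order-equivalent to $Q$ and $\upper(\Pf^\flat(Q))$ is essentially $\upper(Q)$; but $\rca$ already proves that $\upper(Q)$ is Noetherian when $Q$ is a well-order, so the hypothesis would yield nothing. Taking $P$ to be, say, the two-element antichain repairs this: now $\Pf^\flat(Q)$ genuinely refines $Q$, remembering within each block $P_m$ which of the two incomparable copies of $x$ occur, so that ``$\upper(\Pf^\flat(Q))$ is Noetherian'' becomes a real constraint. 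The sets $F_n$ would then be basic closed sets $\{E_0^{(n)},\dots,E^{(n)}_{k_n-1}\}\da^\flat$ recording a stage-$n$ approximation to a bad sequence of $\Pf^\flat(Q)$ threaded through the blocks $P_m$, updated at each stage according to which of the finitely many ``true at stage $n$'' indices survive into $T_{n+1}$: blocks inserted in case~(i) of Definition~\ref{def-Xi} (at indices leaving $T$) push the generators strictly downward, so that the only way the sequence can stabilize is for the ``$\omega^*$-part'' of the block structure to stop growing, i.e.\ for the set of true stages to become identifiable, which by the observation following Definition~\ref{def-true} yields the range of $f$.

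The step I expect to require the most care is precisely the verification, in $\rca$, that the concrete sequence $(F_n)$ just sketched is genuinely a descending sequence of effectively closed sets and that its stabilization lets one $\Delta^0_1$-define an infinite set of true stages (and hence the range of $f$) --- all of this resting on the combinatorial description of $Q$ furnished by Lemma~\ref{lem-Qproperties} and on the bookkeeping needed to keep the closed sets nested. Finally, since ``$\Pf^\flat(Q)$ is a wqo'' implies ``$\alex(\Pf^\flat(Q))$ is Noetherian'' implies ``$\upper(\Pf^\flat(Q))$ is Noetherian'' by Proposition~\ref{prop-WQOvsTopInRCA}, Proposition~\ref{prop-AlexFiner}, and Proposition~\ref{prop-finer}, this argument also reproves the implication (ii)$\Imp$(i) of Theorem~\ref{thm-FlatWQOinACA}, as anticipated in the remark following that theorem.
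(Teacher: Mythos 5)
Your overall architecture is sound in its first half and matches the paper's: fix an injection $f$, form $Q=\Xi_f(P,x)$, and combine the hypothesis with Lemma~\ref{lem-ACAreversal} and Lemma~\ref{lem-ACAinjection}. But the proposal has a genuine gap exactly where the work lies: you never actually construct the descending sequence $(F_n)$ of effectively closed sets in $\upper(\Pf^\flat(Q))$, nor verify its properties; you only describe what such a sequence ought to do and explicitly defer the verification. That construction and verification \emph{is} the proof of this theorem --- the paper devotes essentially its entire argument to defining $E_s=\{\bs{a}_s,\bs{b}_s\}\cup\{\bs{b}_n : n\in T_s\}$ with $\bs{a}_s=\{x_s,y_s\}\cup\{y_n : n\in T_s\}$ and $\bs{b}_s=\{z_s\}\cup\{y_n : n\in T_s\}$, and to proving via Lemma~\ref{lem-Qproperties} that $(E_s\da^\flat)_{s\in\Nb}$ is strictly descending. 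Relatedly, your choice of $P$ as the two-element antichain is very likely insufficient here: the paper reserves that $P$ for the $\sharp$ reversal and for the $\flat$ case takes $P=\{x,y,z\}$ with $x<_P z$ and $y$ incomparable to both. The third element is not incidental. In case~(i) of Definition~\ref{def-Xi} the new block $P_{s+1}$ is inserted immediately above $x_{n_0}$ and hence strictly below $z_{n_0}$, so the generator $\bs{b}_{n_0}\ni z_{n_0}$ simultaneously dominates $\bs{a}_{s+1}$ and $\bs{b}_{s+1}$ under $\leq_Q^\flat$ (keeping the sequence descending) and escapes $E_{s+1}\da^\flat$ (making the descent strict, since $z_{n_0}$ lies strictly above all of $P_{s+1}$ and is incomparable with the relevant $y_n$'s). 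With only $\{x,y\}$ there is no element of the block sitting above the insertion point, and it is unclear which set in $E_s$ would dominate the new generators; at minimum a substantially different and unspecified bookkeeping would be required.

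There is also a structural difference worth noting. The paper does not case-split on whether $Q$ is a wqo: it proves outright, in $\rca$, that $(F_s)$ never stabilizes, concludes that $\upper(\Pf^\flat(Q))$ is not Noetherian, and then the hypothesis (contrapositively) forces $Q$ not to be a wqo, whence Lemma~\ref{lem-ACAreversal} yields the range of $f$. Your variant --- extracting the range from a stabilization point $N$ in the case that $Q$ is a wqo --- is workable in principle, since if no case-(i) insertion occurs after stage $N$ then the set of $f$-true stages equals $T_N\cup\{n : n\geq N\}$ and is $\Delta^0_1$, so the range exists; but this asks for at least as much from the construction as the paper's route and still rests on the same unproven core. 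As it stands the proposal identifies the right ingredients and the right shape of argument but does not prove the theorem.
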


\begin{proof}
Let $f \colon \Nb \imp \Nb$ be an injection. By Lemma~\ref{lem-ACAinjection},
it suffices to show that the range of $f$ exists. Let $P$ be the partial
order $P = \{x, y, z\}$ with $x <_P z$ and $x, z \mid_P y$, and let $Q =
\Xi_f(P,x)$.

We show that $\upper(\Pf^\flat(Q))$ is not Noetherian. Then by our
hypothesis $Q$ is not a wqo, and the existence of the range of $f$ follows from
Lemma~\ref{lem-ACAreversal}. To witness that $\upper(\Pf^\flat(Q))$ is not
Noetherian, we define a sequence $(E_s)_{s \in \Nb}$ of finite subsets of
$\Pf(Q)$ so that the corresponding sequence of effectively closed sets
$(F_s)_{s \in \Nb}$, given by $F_s = E_s\da^\flat$ for each $s \in \Nb$, is
descending but does not stabilize. Notice that in fact $(F_s)_{s \in \Nb}$ is
a sequence of basic closed sets.

For each $s \in \Nb$, let
\begin{gather*}
E_s = \{\bs{a}_s, \bs{b}_s\} \cup \{\bs{b}_n : n \in T_s\},
\intertext{where}
\bs{a}_s = \{x_s, y_s\} \cup \{y_n : n \in T_s\} \quad
\text{and} \quad \bs{b}_s = \{z_s\} \cup \{y_n : n \in T_s\}.
\end{gather*}

We need to show that $F_s \supsetneqq F_{s+1}$ for each $s \in \Nb$. As
$T_{s+1} \subseteq T_s \cup \{s\}$, by the definition of $E_s$ we always
have that $\{\bs{b}_n : n \in T_{s+1}\} \subseteq E_s$. Thus to prove the
inclusion, we focus on $\bs{a}_{s+1}$ and $\bs{b}_{s+1}$.

First suppose that $T_{s+1} \subsetneqq T_s \cup \{s\}$, and let $n_0$ be the
least element of $(T_s \cup \{s\}) \setminus T_{s+1}$. By the construction of
$Q$, $P_{s+1}$ is placed between $x_{n_0}$ and $z_{n_0}$, and therefore
$x_{s+1}, y_{s+1}, z_{s+1} <_Q z_{n_0}$. As argued in the proof of
Lemma~\ref{lem-Qproperties}, it must be that $f(s+1) < f(n_0)$ and $(\forall
k \in (n_0,s])(f(n_0) < f(k))$. Therefore $(\forall k \in [n_0,s])(f(s+1) <
f(k))$, and $s+1$ witnesses that no element in the interval $[n_0,s]$ is true.
This implies that $T_{s+1} \subseteq T_{n_0}$. We now see that
$E_s\da^\flat \supseteq E_{s+1}\da^\flat$: $\bs{a}_{s+1},
\bs{b}_{s+1} \leq_Q^\flat \bs{b}_{n_0}$ because $x_{s+1}, y_{s+1}, z_{s+1}
<_Q z_{n_0}$ and $\{y_n : n \in T_{s+1}\} \subseteq \{y_n : n \in T_{n_0}\}$,
and $\bs{b}_{n_0} \in E_s$ because either $n_0 = s$ or $n_0 \in T_s$.

We now show that $E_s\da^\flat \supsetneqq E_{s+1}\da^\flat$ by showing that
$\bs{b}_{n_0} \notin E_{s+1}\da^\flat$. This means that we need to show that $
\bs{b}_{n_0}
\nleq_Q^\flat \bs{a}_{s+1}$, $\bs{b}_{n_0} \nleq_Q^\flat \bs{b}_{s+1}$, and
$\bs{b}_{n_0} \nleq_Q^\flat \bs{b}_n$ for each $n \in T_{s+1}$. Notice that
$x_{s+1}, y_{s+1}, z_{s+1} <_Q z_{n_0}$, and if $n \in T_{s+1} \subseteq
T_{n_0}$, then $z_{n_0} \mid_Q y_n$ by Lemma~\ref{lem-Qproperties}~(i).
Hence $z_{n_0} \notin \bs{a}_{s+1}\da$ and $z_{n_0} \notin \bs{b}_{s+1}\da$.
As $z_{n_0} \in \bs{b}_{n_0}$, it follows that $\bs{b}_{n_0} \nleq_Q^\flat
\bs{a}_{s+1}$ and $\bs{b}_{n_0} \nleq_Q^\flat \bs{b}_{s+1}$. Now fix $n \in
T_{s+1}$, and note that $y_n \in \bs{b}_{n_0}$ because $T_{s+1} \subseteq
T_{n_0}$. However, $y_n \notin \bs{b}_n\da$ because $y_n \mid_Q
z_n$ by the definition of $P$, and $y_n \mid_Q y_\ell$ for all $\ell \in T_n$
by Lemma~\ref{lem-Qproperties}~(i). Thus $\bs{b}_{n_0} \nleq_Q^\flat
\bs{b}_n$.

Now suppose that $T_{s+1} = T_s \cup \{s\}$. Then obviously $\{y_n : n \in
T_{s+1}\}= \{y_n : n \in T_s\} \cup \{y_s\}$ and, since in this case
$P_{s+1}$ is placed immediately below $x_s$, we have $x_{s+1}, y_{s+1},
z_{s+1} <_Q x_s$. Thus $\bs{a}_{s+1}, \bs{b}_{s+1} \leq_Q^\flat
\bs{a}_s$, and so $E_s\da^\flat \supseteq E_{s+1}\da^\flat$. We show that
$E_s\da^\flat \supsetneqq E_{s+1}\da^\flat$ by showing that $\bs{a}_s \notin
E_{s+1}\da^\flat$. We already noticed that $x_{s+1}, y_{s+1}, z_{s+1} <_Q
x_s$. If $n \in T_{s+1}$, then either $n=s$, in which case $x_s \mid_Q y_n$
by the definition of $P$, or $n \in T_s$, in which case $x_s \mid_Q y_n$ by
Lemma~\ref{lem-Qproperties}~(i). This shows that $x_s \notin \bs{a}_{s+1}\da
\cup \bs{b}_{s+1}\da$ and thus (because $x_s \in \bs{a}_s$) that $\bs{a}_s
\nleq_Q^\flat \bs{a}_{s+1}$ and $\bs{a}_s \nleq_Q^\flat \bs{b}_{s+1}$. For $n
\in T_{s+1} = T_s \cup \{s\}$, we have that $\bs{a}_s \nleq_Q^\flat \bs{b}_n$
because $y_n \in \bs{a}_s$ but, as explained in the preceding paragraph, $y_n
\notin \bs{b}_n\da$.  Thus $\bs{a}_s \nleq_Q^\flat \bs{b}_n$ for each $n \in T_{s
+1}$ and therefore $\bs{a}_s \notin E_{s+1}\da^\flat$.

This completes the proof that $(F_s)_{s \in \Nb}$ witnesses that
$\upper(\Pf^\flat(Q))$ is not Noetherian.
\end{proof}

Notice that Theorem~\ref{thm-FlatReversal} gives an alternate reversal for
Theorem~\ref{thm-FlatWQOinACA} because the statement ``if $Q$ is a wqo, then
$\Pf^\flat(Q)$ is a wqo'' implies the statement ``if $Q$ is a wqo, then
$\upper(\Pf^\flat(Q))$ is Noetherian'' over $\rca$ by
Proposition~\ref{prop-WQOvsTopInRCA}. Thus we may see
Theorem~\ref{thm-FlatReversal} as a strengthening of the reversal in
Theorem~\ref{thm-FlatWQOinACA}.

\begin{Theorem}\label{thm-SharpReversal}
The statement ``if $Q$ is a wqo, then $\upper(\Pf^\sharp(Q))$ is Noetherian''
implies $\aca$ over $\rca$.
\end{Theorem}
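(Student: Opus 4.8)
The plan is to mirror the proof of Theorem~\ref{thm-FlatReversal}, working with the Smyth quasi-order $\leq_Q^\sharp$ in place of the Hoare quasi-order $\leq_Q^\flat$. First I would let $f\colon\Nb\imp\Nb$ be an injection; by Lemma~\ref{lem-ACAinjection} it suffices to prove that the range of $f$ exists. I would fix a suitable finite partial order $P$ together with a distinguished element $x$ --- again a small order containing an incomparable pair, the Smyth-side analogue of the order $\{x,y,z\}$ used in Theorem~\ref{thm-FlatReversal} --- and set $Q=\Xi_f(P,x)$, a recursive partial order. The whole point is to show, in $\rca$, that $\upper(\Pf^\sharp(Q))$ is not Noetherian. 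Granting this, the statement being reversed forces $Q$ not to be a wqo, whence Lemma~\ref{lem-ACAreversal} gives that the range of $f$ exists, and so $\aca$ holds by Lemma~\ref{lem-ACAinjection}.

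To witness that $\upper(\Pf^\sharp(Q))$ is not Noetherian I would, for each stage $s$, define a finite set $E_s\in\Pf(\Pf(Q))$ uniformly from $T_s$ (recall that $\rca$ proves the sequence $(T_s)_{s\in\Nb}$ exists) and put $F_s=E_s\da^\sharp$, so that $(F_s)_{s\in\Nb}$ is a sequence of basic closed sets in $\upper(\Pf^\sharp(Q))$. The sets $E_s$ should be arranged so that every $\bs e\in E_{s+1}$ satisfies $\bs e\leq_Q^\sharp\bs e'$ for some $\bs e'\in E_s$, which forces $F_{s+1}\subseteq F_s$ by transitivity of $\leq_Q^\sharp$, and so that for each $s$ there is an explicit finite set $\bs a\in E_s$ with $\bs a\notin E_{s+1}\da^\sharp$ --- equivalently, with $\bs e\nsubseteq\bs a\ua$ for every $\bs e\in E_{s+1}$ --- which forces $F_s\supsetneqq F_{s+1}$. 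As in Theorem~\ref{thm-FlatReversal}, there are two cases at each stage $s$. When a stage $n_0$ dies, i.e.\ $T_{s+1}\subsetneqq T_s\cup\{s\}$ with $n_0$ the least element of $(T_s\cup\{s\})\setminus T_{s+1}$, a marker set built from the copy $P_{n_0}$ should leave $E_{s+1}\da^\sharp$; when $s$ survives, i.e.\ $T_{s+1}=T_s\cup\{s\}$, a marker set built from $P_s$ should leave. The required (in)comparabilities between elements of distinct copies $P_m$ and $P_n$ with $n<m$ are precisely those supplied by Lemma~\ref{lem-Qproperties}: $n\in T_m$ forces $P_m\leq_Q x_n$ and preserves incomparabilities with $x_n$, while $n\notin T_m$ forces $x_n\leq_Q P_m$. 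Once $(F_s)_{s\in\Nb}$ is verified to be a descending, non-stabilizing sequence of effectively closed sets, the argument concludes as above.

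The hard part will be the combinatorial bookkeeping: choosing $P$, the marker sets, and the sets $E_s$ so that the strict drop $F_s\supsetneqq F_{s+1}$ occurs at \emph{every} stage --- both in the death case and in the survival case --- while keeping each $E_s$ uniformly computable from $T_s$. This is where the Smyth case genuinely diverges from Theorem~\ref{thm-FlatReversal}: because $A\leq_Q^\sharp B$ is an $\exists\forall$ statement rather than a $\forall\exists$ one, the roles played by the ``top'' element and the incomparable element of $P$ are interchanged, and one must re-derive which of the newly added elements of $Q$ fail to lie \emph{above} the elements of a given marker set. I expect that, exactly as for the $\flat$ case, this reduces to a finite case analysis driven entirely by Lemma~\ref{lem-Qproperties}, requiring no new conceptual ingredient.
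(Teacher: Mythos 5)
Your framing is exactly the paper's: take an injection $f$, appeal to Lemma~\ref{lem-ACAinjection}, set $Q=\Xi_f(P,x)$ for a small $P$ with an incomparable pair (the paper uses the two-element antichain $P=\{x,y\}$), show in $\rca$ that $\upper(\Pf^\sharp(Q))$ is not Noetherian, and conclude via Lemma~\ref{lem-ACAreversal}. But the entire mathematical content of the theorem is the step you defer to ``combinatorial bookkeeping'': the explicit definition of the sets $E_s$ and the verification that the resulting closed sets form a strictly descending, non-stabilizing sequence. As written, the proposal asserts that such $E_s$ exist and that the verification ``reduces to a finite case analysis requiring no new conceptual ingredient,'' without exhibiting them; that is a genuine gap, and it is precisely where the sharp case stops being a routine transcription of Theorem~\ref{thm-FlatReversal}.

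Two concrete points of divergence from what actually works. First, in the paper the sets $\bs a_s,\bs b_s, E_s$ are \emph{not} given by a closed formula in $T_s$ as in the flat case: in the death case ($T_{s+1}\subsetneqq T_s\cup\{s\}$ with least casualty $n_0$) one must build $\bs a_{s+1},\bs b_{s+1}$ from $\bs b_{n_0}$ and rebuild $E_{s+1}$ from $E_{n_0}$, so the definition is a genuine recursion reaching back to stage $n_0$. Second, the paper does not take $F_s=E_s\da^\sharp$ as you propose, but $F_s=\bigcap_{t\le s}E_t\da^\sharp$, and the strictness argument then requires a \emph{persistence} claim --- that $\bs a_s$ and $\bs b_s$ lie in $E_i\da^\sharp$ for \emph{every} $i\le s$ --- together with two further inductive claims (that $\bigcup E_s$ is an antichain in $Q$, and that $\bs a_s\notin(E_s\setminus\{\bs a_s\})\da^\sharp$ and likewise for $\bs b_s$). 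The persistence claim has no analogue in the flat proof and is exactly what guarantees that the departing witness ($\bs b_{n_0}$ in the death case, $\bs a_s$ in the survival case) was actually present in all earlier closed sets and not merely in $E_s\da^\sharp$. Your stronger demand that each element of $E_{s+1}$ be $\leq_Q^\sharp$ some element of $E_s$ is plausibly attainable for the paper's $E_s$, but you would still have to prove it across a collapse from stage $s$ back to stage $n_0$, and nothing in the proposal indicates how. Until the $E_s$ are written down and these claims established, the reversal is not proved.
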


\begin{proof}
Let $f \colon \Nb \imp \Nb$ be an injection. Let $P$ be the partial order $P
= \{x, y\}$ with $x \mid_P y$, and let $Q = \Xi_f(P,x)$. As in the proof of
Theorem~\ref{thm-FlatReversal}, it suffices to show that
$\upper(\Pf^\sharp(Q))$ is not Noetherian, and then appeal to the hypothesis,
Lemma~\ref{lem-ACAinjection}, and Lemma~\ref{lem-ACAreversal}.

To show that $\upper(\Pf^\sharp(Q))$ is not Noetherian, we define a sequence
$(E_s)_{s \in \Nb}$ of finite subsets of $\Pf(Q)$ so that the sequence of
effectively closed sets $(F_s)_{s \in \Nb}$, where $F_s = \bigcap_{t \leq s}
E_t\da^\sharp$ for each $s \in \Nb$, is descending but does not stabilize. We
define the sequence $(E_s)_{s \in \Nb}$ in stages along with sequences of
elements $(\bs{a}_s)_{s \in \Nb}$ and $(\bs{b}_s)_{s \in \Nb}$ from $\Pf(Q)$.
We ensure that $E_s$ is always a subset of
$\{\bs{a}_t : t \leq s\} \cup \{\bs{b}_t : t \leq s\}$ and always contains
$\bs{a}_s$ and $\bs{b}_s$. Among the sets in $E_s$,
$\bs{a}_s$ is the unique set containing $x_s$, and $\bs{b}_s$ is the unique
set containing $y_s$.

At stage $0$, let $\bs{a}_0 = \{x_0\}$, let $\bs{b}_0 = \{y_0\}$, and let
$E_0 = \{\bs{a}_0, \bs{b}_0\}$. At stage $s+1$, the definition of $E_{s+1}$
proceeds according to the construction of $Q$.
\begin{enumerate}[(i)]
\item If $T_{s+1} \subsetneqq T_s \cup \{s\}$ and $n_0$ is the least
    element of $(T_s \cup \{s\}) \setminus T_{s+1}$, then set $\bs{a}_{s+1}
    = \bs{b}_{n_0} \cup \{x_{s+1}\}$, $\bs{b}_{s+1} = \bs{b}_{n_0} \cup
    \{y_{s+1}\}$, and $E_{s+1} = (E_{n_0} \setminus \{\bs{a}_{n_0},
    \bs{b}_{n_0}\}) \cup \{\bs{a}_{s+1}, \bs{b}_{s+1}\}$.

\item If $T_{s+1} = T_s \cup \{s\}$, then set $\bs{a}_{s+1} = (\bs{a}_s
    \setminus \{x_s\}) \cup \{x_{s+1}\}$, $\bs{b}_{s+1} = (\bs{b}_s
    \setminus \{y_s\}) \cup \{y_{s+1}\}$, and $E_{s+1} = (E_s \setminus
    \{\bs{a}_s\}) \cup \{\bs{a}_{s+1}, \bs{b}_{s+1}\}$.
\end{enumerate}

The sequence $(F_s)_{s \in \Nb}$ is clearly descending by definition; we must
show that it is strictly descending. To do this, we identify some helpful
properties of the sequences $(\bs{a}_s)_{s \in \Nb}$, $(\bs{b}_s)_{s \in
\Nb}$, and $(E_s)_{s \in \Nb}$. First, observe that $\forall s(\bs{a}_s
\setminus \{x_s\} = \bs{b}_s \setminus \{y_s\})$ by an easy induction
argument.

\begin{NClaim}\label{claim-UEantichain}
$\forall s(\text{$\bigcup E_s$ is an antichain in $Q$})$.
\end{NClaim}

\begin{proof}[Proof of claim]
By $\Sigma^0_0$ induction on $s$. The case $s=0$ is clear because $x_0 
\mid_Q
y_0$. Consider $s+1$. First suppose that $E_s$ is defined according to~(i)
and that $n_0$ is the least element of $(T_s \cup \{s\}) \setminus T_{s+1}$.
By the induction hypothesis, $\bigcup E_{n_0}$ is an antichain. By the
construction of $Q$, $x_{s+1}$ and $y_{s+1}$ are placed immediately above
$x_{n_0} \in \bigcup E_{n_0}$ and hence are incomparable with the elements of
$\bigcup E_{n_0} \setminus \{x_{n_0}\}$. Thus $\bigcup E_{s+1} \subseteq
(\bigcup E_{n_0} \setminus \{x_{n_0}\}) \cup \{x_{s+1}, y_{s+1}\}$ (in fact
the reader can check that $\bigcup E_{s+1} = (\bigcup E_{n_0} \setminus
\{x_{n_0}\}) \cup \{x_{s+1}, y_{s+1}\}$) is an antichain.

Now suppose that $E_{s+1}$ is defined according to~(ii). By the induction
hypothesis, $\bigcup E_s$ is an antichain. By the construction of $Q$,
$x_{s+1}$ and $y_{s+1}$ are placed immediately below $x_s \in \bigcup E_s$
and hence are incomparable with the elements of $\bigcup E_s \setminus
\{x_s\}$. Thus $\bigcup E_{s+1} \subseteq (\bigcup E_s \setminus \{x_s\})
\cup \{x_{s+1}, y_{s+1}\}$ (again, in fact the reader can check that $\bigcup
E_{s+1} = (\bigcup E_s \setminus \{x_s\}) \cup \{x_{s+1}, y_{s+1}\}$) is an
antichain.
\end{proof}

\begin{NClaim}\label{claim-avoid}
$\forall s(\bs{a}_s \notin (E_s \setminus \{\bs{a}_s\})\da^\sharp \andd
\bs{b}_s \notin (E_s \setminus \{\bs{b}_s\})\da^\sharp)$.
\end{NClaim}

\begin{proof}[Proof of claim]
By $\Sigma^0_0$ induction on $s$. The case $s=0$ is clear. Consider $s+1$.
First suppose that $E_{s+1}$ is defined according to~(i), and let $n_0$ be
the least element of $(T_s \cup \{s\}) \setminus T_{s+1}$. Suppose that
$\bs{a}_{s+1} \leq_Q^\sharp \bs e$ for some $\bs e \in E_{n_0} \setminus
\{\bs{a}_{n_0}, \bs{b}_{n_0}\}$. Then $\bs e \subseteq \bs{a}_{s+1}\ua$. As
$\bs{a}_{n_0}$ is the only element of $E_{n_0}$ containing $x_{n_0}$, $x_{n_0}
\notin \bs e$ and so $\bs e \cup \{x_{n_0}\}$ is an antichain by
Claim~\ref{claim-UEantichain}; in particular, no element of $\bs e$ is $\geq_Q
x_{n_0}$.
However, $\bs{a}_{s+1}$ is $\bs{b}_{n_0} \cup \{x_{s+1}\}$, and $x_{s+1}
\geq_Q x_{n_0}$. It follows that $\bs e
\subseteq \bs{b}_{n_0}\ua$ and so $\bs{b}_{n_0} \in (E_{n_0} \setminus
\{\bs{b}_{n_0}\})\da^\sharp$. This contradiction to the induction hypothesis shows
that
$\bs{a}_{s+1} \notin (E_{s+1} \setminus \{\bs{a}_{n_0},\bs{b}_{n_0}\})\da^\sharp$.
Finally,
$\bs{a}_{s+1} \mid_Q^\sharp \bs{b}_{s+1}$ because $\bs{a}_{s+1}
\cup \bs{b}_{s+1}$ is an antichain by Claim~\ref{claim-UEantichain}, which
means that $x_{s+1} \notin \bs{b}_{s+1}\ua$ and $y_{s+1} \notin
\bs{a}_{s+1}\ua$. Thus $\bs{a}_{s+1} \notin (E_{s+1} \setminus
\{\bs{a}_{s+1}\})\da^\sharp$. A similar argument shows that $\bs{b}_{s+1}
\notin (E_{s+1} \setminus \{\bs{b}_{s+1}\})\da^\sharp$.

Now consider the case that $E_{s+1}$ is defined according to~(ii), and suppose
that
$\bs{a}_{s+1} \leq_Q^\sharp \bs e$ for some $\bs e \in E_s \setminus
\{\bs{a}_s\}$. Then $\bs e \subseteq \bs{a}_{s+1}\ua$. However, since
$\bs{a}_{s+1}$ is given by replacing $x_s$  with $x_{s+1}$ in $\bs{a}_s$ and
$x_{s+1}$ is
placed immediately below $x_s$, any $z \in \bs e$ that is $\geq_Q
x_{s+1}$ is also $\geq_Q x_s$, and therefore $\bs e \subseteq \bs{a}_s\ua$ as
well. So $\bs{a}_s \in (E_s \setminus \{\bs{a}_s\})\da^\sharp$, which
contradicts the induction hypothesis. Thus $\bs{a}_{s+1} \notin (E_s \setminus
\{\bs{a}_s\})\da^\sharp$.
Since $\bs{a}_{s+1} \mid_Q^\sharp \bs{b}_{s+1}$, as argued in the previous case,
we again have that
$\bs{a}_{s+1} \notin (E_{s+1} \setminus \{\bs{a}_{s+1}\})\da^\sharp$.
A similar argument shows that $\bs{b}_{s+1} \notin (E_{s+1} \setminus
\{\bs{b}_{s+1}\})\da^\sharp$.
\end{proof}

\begin{NClaim}\label{claim-persist}
$(\forall s)(\forall i \leq s)(\bs{a}_s \in E_i \da^\sharp \andd \bs{b}_s \in
E_i \da^\sharp)$.
\end{NClaim}

\begin{proof}[Proof of claim]
By $\Sigma^0_0$ induction on $s$. The case $s=0$ is clear. Consider $s+1$.
First suppose that $E_{s+1}$ is defined according to~(i), and let $n_0$ be
the least element of $(T_s \cup \{s\}) \setminus T_{s+1}$. By the induction
hypothesis for $n_0$, $(\forall i \leq n_0)(\bs{b}_{n_0} \in E_i
\da^\sharp)$. Since $\bs{a}_{s+1}\leq_Q^\sharp \bs{b}_{n_0}$ and
$\bs{b}_{s+1} \leq_Q^\sharp \bs{b}_{n_0}$, it suffices to show that $(\forall
i \leq s)(\bs{b}_{n_0} \in E_i \da^\sharp)$. By definition, $\bs{b}_{n_0} \in
E_{n_0}$; and if $\bs{b}_{n_0} \in E_i$ and $E_{i+1}$ is defined according
to~(ii), then $\bs{b}_{n_0} \in E_{i+1}$. So if $\bs{b}_{n_0} \notin E_{i+1}$
for some $i+1 \in (n_0, s]$, it must be because $E_{i+1}$ is defined
according to~(i) and the least element $n_1$ of $(T_i \cup \{i\}) \setminus
T_{i+1}$ is less than $n_0$. Then $f(i+1) < f(n_1) < f(n_0)$ because $i+1$ is
the least number witnessing that $n_1$ is not true, contradicting that $n_0
\in T_s \cup \{s\}$. Hence $(\forall i \leq s)(\bs{b}_{n_0} \in
E_i\da^\sharp)$.

Now suppose that $E_{s+1}$ is defined according to~(ii). By the induction
hypothesis, $(\forall i \leq s)(\bs{a}_s \in E_i \da^\sharp)$. As
$\bs{a}_{s+1} \leq_Q^\sharp \bs{a}_s$ and $\bs{b}_{s+1} \leq_Q^\sharp
\bs{a}_s$ (because in this case $x_{s+1}, y_{s+1} <_Q x_s$), it follows that
$(\forall i \leq s+1)(\bs{a}_{s+1} \in E_i \da^\sharp \andd \bs{b}_{s+1} \in
E_i \da^\sharp)$ as well.
\end{proof}

We can now show that $(F_s)_{s \in \Nb}$ is strictly descending. Consider $s
\in \Nb$. Suppose that $E_{s+1} = E_{n_0} \setminus (\{\bs{a}_{n_0},
\bs{b}_{n_0}\}) \cup \{\bs{a}_{s+1}, \bs{b}_{s+1}\}$ is defined according
to~(i), where $n_0$ is the least element of $(T_s \cup \{s\}) \setminus
T_{s+1}$. Then $\bs{b}_{n_0} \in \bigcap_{t \leq s}E_t\da^\sharp = F_s$ as
shown in the proof of Claim~\ref{claim-persist}. However, $\bs{a}_{s+1}
<_Q^\sharp \bs{b}_{n_0}$ and $\bs{b}_{s+1} <_Q^\sharp \bs{b}_{n_0}$ because
neither $x_{s+1}$ nor $y_{s+1}$ is above any element of $\bs{b}_{n_0}$ by
Claim~\ref{claim-UEantichain}, and $\bs{b}_{n_0}$ is not $\leq_Q^\sharp$ any
element of $E_{n_0} \setminus \{\bs{a}_{n_0}, \bs{b}_{n_0}\}$ by
Claim~\ref{claim-avoid}. Thus $\bs{b}_{n_0} \notin E_{s+1}\da^\sharp$, so
$\bs{b}_{n_0} \notin F_{s+1}$.

Finally, suppose that $E_{s+1} = (E_s \setminus \{\bs{a}_s\}) \cup
\{\bs{a}_{s+1}, \bs{b}_{s+1}\}$ is defined according to~(ii). Then by
Claim~\ref{claim-persist}, $\bs{a}_s \in \bigcap_{t \leq s}E_t\da^\sharp =
F_s$. On the other hand, since neither $x_{s+1}$ nor $y_{s+1}$ is above any
element of $\bs{a}_s$, $\bs{a}_{s+1} <_Q^\sharp \bs{a}_s$ and $\bs{b}_{s+1}
<_Q^\sharp \bs{a}_s$, while $\bs{a}_s$ is not $\leq_Q^\sharp$ any element of
$E_s \setminus \{\bs{a}_s\}$ by Claim~\ref{claim-avoid}. Thus $\bs{a}_s
\notin E_{s+1}\da^\sharp$, and so $\bs{a}_s \notin F_{s+1}$.

This completes the proof that $(F_s)_{s \in \Nb}$ witnesses that
$\upper(\Pf^\flat(Q))$ is not Noetherian.
\end{proof}

\begin{Theorem}~\label{thm-BigEquiv}
The following are equivalent over $\rca$.
\begin{enumerate}[(i)]
\item $\aca$.
\item If $Q$ is a wqo, then $\alex(\Pf^\flat(Q))$ is Noetherian.
\item If $Q$ is a wqo, then $\upper(\Pf^\flat(Q))$ is Noetherian.
\item If $Q$ is a wqo, then $\upper(\Pf^\sharp(Q))$ is Noetherian.
\item If $Q$ is a wqo, then $\upper(\Ps^\flat(Q))$ is Noetherian.
\item If $Q$ is a wqo, then $\upper(\Ps^\sharp(Q))$ is Noetherian.
\end{enumerate}
\end{Theorem}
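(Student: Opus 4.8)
The plan is to prove the theorem purely by assembling results already established in Sections~\ref{sec-Noeth} and~\ref{sec-reversals}; no fundamentally new argument is needed, only a diagram chase verifying that the resulting implications connect every one of the six items. I would split the work into the forward directions, (i)$\Imp$(ii)--(vi), and the reversals, (ii)--(vi)$\Imp$(i).

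For the forward directions I would simply cite: (i)$\Imp$(ii) and (i)$\Imp$(iii) are Theorem~\ref{thm-FlatInACA}; (i)$\Imp$(iv) is Corollary~\ref{thm-SharpInACA}; (i)$\Imp$(v) is Theorem~\ref{thm-UnctblFlatInACA}; and (i)$\Imp$(vi) is Theorem~\ref{thm-UnctblSharpInACA}. For the reversals I would route everything through the finite (countable) spaces. First, (iii)$\Imp$(i) is Theorem~\ref{thm-FlatReversal} and (iv)$\Imp$(i) is Theorem~\ref{thm-SharpReversal}. For (ii)$\Imp$(i), I would note that by Proposition~\ref{prop-WQOvsTopInRCA}(ii) the statement ``$\alex(\Pf^\flat(Q))$ is Noetherian'' is equivalent over $\rca$ to ``$\Pf^\flat(Q)$ is a wqo'', so item~(ii) is precisely Theorem~\ref{thm-FlatWQOinACA}(ii), which implies $\aca$; alternatively, one can observe that $\alex(\Pf^\flat(Q))$ is effectively finer than $\upper(\Pf^\flat(Q))$ by Proposition~\ref{prop-AlexFiner}, so (ii)$\Imp$(iii) by Proposition~\ref{prop-finer}, and then apply Theorem~\ref{thm-FlatReversal}. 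Finally, (v)$\Imp$(iii) and (vi)$\Imp$(iv) are exactly Theorem~\ref{thm-UnctblNoethImpliesCtblNoeth}(i) and~(ii), so composing with the reversals above yields (v)$\Imp$(i) and (vi)$\Imp$(i).

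Since the substantive content — the $\aca$ proofs of the forward implications and the $\Xi_f(P,x)$-based constructions driving the reversals — is already in hand, I do not expect a genuine obstacle; the one thing to be careful about is bookkeeping: checking that the chosen implications (i)$\Imp$(ii), \dots, (i)$\Imp$(vi), (ii)$\Imp$(i), (iii)$\Imp$(i), (iv)$\Imp$(i), (v)$\Imp$(iii), (vi)$\Imp$(iv) form a graph in which every item both reaches and is reachable from~(i), and that each of these implications — including the auxiliary facts about effectively finer topologies and the countable/uncountable translation behind Theorem~\ref{thm-UnctblNoethImpliesCtblNoeth} — is provable in $\rca$, so that the equivalence genuinely holds over $\rca$.
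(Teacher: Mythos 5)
Your proposal is correct and assembles the equivalence from exactly the same ingredients as the paper: the forward directions from Theorems~\ref{thm-FlatInACA}, \ref{thm-UnctblFlatInACA}, \ref{thm-UnctblSharpInACA} and Corollary~\ref{thm-SharpInACA}, and the reversals routed through the countable spaces via Theorems~\ref{thm-FlatReversal}, \ref{thm-SharpReversal}, \ref{thm-UnctblNoethImpliesCtblNoeth}, and Proposition~\ref{prop-WQOvsTopInRCA}. Your first suggested route for (ii)$\Imp$(i), via Proposition~\ref{prop-WQOvsTopInRCA}~(ii) and Theorem~\ref{thm-FlatWQOinACA}, is a harmless minor variant of the paper's route (which instead passes from (ii) to (iii) and applies Theorem~\ref{thm-FlatReversal}), and both work.
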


\begin{proof}
That (i) implies (ii) and (iii) is Theorem~\ref{thm-FlatInACA}. That (i)
implies (iv) is Corollary~\ref{thm-SharpInACA}. That (i) implies (v) is
Theorem~\ref{thm-UnctblFlatInACA}. That (i) implies (vi) is
Theorem~\ref{thm-UnctblSharpInACA}. That (ii), (iii), and (v) imply (i) is
Theorem~\ref{thm-FlatReversal}. For (ii), use also
Proposition~\ref{prop-WQOvsTopInRCA}, and for (v), use also
Theorem~\ref{thm-UnctblNoethImpliesCtblNoeth}~(i). That (iv) and (vi) imply
(i) is Theorem~\ref{thm-SharpReversal}. For (vi), use also
Theorem~\ref{thm-UnctblNoethImpliesCtblNoeth}~(ii).
\end{proof}

Upon hearing the third author speak about the results contained in this
paper, Takashi Sato asked whether the converses of the statements
in Theorem~\ref{thm-BigEquiv} hold, and, for those that do, what system is 
needed to prove them.  First notice that $\upper(\Pf^\flat(Q))$ can be Noetherian
without $Q$ being a wqo.  Indeed, if $Q$ is an infinite antichain, then all closed
sets in $\upper(\Pf^\flat(Q))$ are finite and thus $\upper(\Pf^\flat(Q))$ is
Noetherian.  Nevertheless, $\rca$ easily proves that if $\upper(\Pf^\flat(Q))$ is
Noetherian, then $Q$ is well-founded.  The next proposition shows that the other
converses are provable in $\rca$.

\begin{Proposition}[$\rca$]
Let $Q$ be a quasi-order.  If $\alex(\Pf^\flat(Q))$, $\upper(\Pf^\sharp(Q))$,
$\upper(\Ps^\flat(Q))$, or $\upper(\Ps^\sharp(Q))$ is Noetherian, then $Q$
is a wqo.
\end{Proposition}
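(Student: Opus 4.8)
The plan is to prove the contrapositive in each of the four cases: assuming $Q$ is not a wqo, fix a bad sequence $(q_n)_{n \in \Nb}$, so that $q_m \nleq_Q q_n$ whenever $m < n$, and then exhibit a descending sequence of effectively closed sets that does not stabilize, contradicting that the relevant space is Noetherian.

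The case of $\alex(\Pf^\flat(Q))$ needs no construction. The map $q \mapsto \{q\}$ from $Q$ to $\Pf^\flat(Q)$ satisfies $\{p\} \leq_Q^\flat \{q\}$ if and only if $p \leq_Q q$, so $(\{q_n\})_{n \in \Nb}$ is a bad sequence in $\Pf^\flat(Q)$; hence $\Pf^\flat(Q)$ is not a wqo, and $\alex(\Pf^\flat(Q))$ is not Noetherian by Proposition~\ref{prop-WQOvsTopInRCA}~(ii).

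For each of the three upper-topology cases the idea is uniform: record one ``point-separating'' basic closed set per element of the bad sequence. In $\upper(\Ps^\flat(Q))$, take $F_n = \bigcap_{i < n}\{X \subseteq Q : q_i \notin X\da\}$; each set $\{X : q_i \notin X\da\}$ is the complement of the basic open set indexed by $\{q_i\} \in \Pf(Q)$, so it is effectively closed, and the whole sequence $(F_n)_{n \in \Nb}$ is coded uniformly from $(q_n)_{n \in \Nb}$ and hence exists in $\rca$. Clearly $F_{n+1} \subseteq F_n$, and using the point $\{q_n\}$ one has $q_n \in \{q_n\}\da$ while $q_i \notin \{q_n\}\da$ for $i < n$ by badness, so $\{q_n\} \in F_n \setminus F_{n+1}$; thus $(F_n)_{n \in \Nb}$ is strictly descending and does not stabilize. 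For $\upper(\Ps^\sharp(Q))$ and $\upper(\Pf^\sharp(Q))$, dually take $F_n = \bigcap_{i < n}\{X : q_i \in X\ua\}$; each $\{X : q_i \in X\ua\}$ is the complement of a basic open set in both spaces, hence effectively closed, and equals $\{\{q_i\}\}\da^\sharp$. Here the correct witness to non-stabilization is the \emph{initial segment} $\bs{x}_n = \{q_0, \dots, q_{n-1}\}$, a finite set and so a point of both spaces: one has $q_i \in \bs{x}_n\ua$ for all $i < n$ but $q_n \notin \bs{x}_n\ua$ by badness, so $\bs{x}_n \in F_n \setminus F_{n+1}$, and again the sequence is strictly descending.

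I do not expect a genuine obstacle; the argument is a matter of unwinding the codings. The two points needing a little care are (a) checking that each $F_n$ is an effectively closed set under the specific codings of Definitions~\ref{def-UnctblUpperFlat} and~\ref{def-UnctblUpperSharp} (and under the base for $\upper(\Pf^\sharp(Q))$), and that $n \mapsto F_n$ is a legitimate sequence of such sets in $\rca$; and (b) using the correct witness to strict descent, which is a singleton $\{q_n\}$ in the $\flat$ case but an initial segment $\{q_0, \dots, q_{n-1}\}$ in the $\sharp$ cases --- exactly the asymmetry between $\leq_Q^\flat$ and $\leq_Q^\sharp$.
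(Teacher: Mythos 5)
Your proof is correct and follows essentially the same route as the paper's: contrapositive, the embedding $q\mapsto\{q\}$ for the Alexandroff case, and the same non-stabilizing descending sequences of basic closed sets with the same witnesses (your $F_n=\bigcap_{i<n}\{X: q_i\notin X\da\}$ is exactly the paper's $\bigcap_{i\le n}\{Q\setminus(q_i\ua)\}\da^\flat$ up to re-indexing, and your $\sharp$-case sets coincide with the paper's $\{q_i:i\le n\}\da^\sharp$). The only cosmetic difference is that you treat $\upper(\Ps^\sharp(Q))$ directly with the same closed sets, where the paper instead invokes Theorem~\ref{thm-UnctblNoethImpliesCtblNoeth}~(ii).
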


\begin{proof}
We prove the contrapositive.  The result for $\alex(\Pf^\flat(Q))$ follows from
the fact that if $Q$ is not wqo then $\Pf^\flat(Q)$ is not wqo (because
$q \mapsto \{q\}$ embeds $Q$ into $\Pf^\flat(Q)$) and from the second part
of Proposition~\ref{prop-WQOvsTopInRCA}.

For the other spaces, first fix a bad sequence $(q_i)_{i \in \Nb}$ of elements 
of $Q$.

To see that $\upper(\Ps^\flat(Q))$ is not Noetherian, let $F_n = \bigcap_{i
\leq n} \{Q \setminus (q_i\ua)\}\da^\flat$.  The sequence $(F_n)_{n \in \Nb}$
is a non-stabilizing descending sequence of effectively closed sets as
witnessed by $\{q_{n+1}\} \in F_n \setminus F_{n+1}$.

To see that $\upper(\Pf^\sharp(Q))$ is not Noetherian, let $H_n = \{q_i : i \leq
n\}\da^\sharp$.  The sequence $(H_n)_{n \in \Nb}$ is a non-stabilizing
descending sequence of effectively closed sets as witnessed by $\{q_i : i
\leq n\} \in H_n \setminus H_{n+1}$.

The result for $\upper(\Ps^\sharp(Q))$ follows easily from the result for 
$\upper(\Pf^\sharp(Q))$ and
Theorem~\ref{thm-UnctblNoethImpliesCtblNoeth}~(ii).
\end{proof}

\bibliography{noetherian}

\end{document}